\documentclass[10pt]{amsart}
\usepackage{amssymb,amsmath,txfonts,amsthm}
\usepackage{hyperref}
\usepackage{mathrsfs}

\newcommand\blfootnote[1]{%
  \begingroup
  \renewcommand\thefootnote{}\footnote{#1}%
  \addtocounter{footnote}{-1}%
  \endgroup
}
\newtheorem{theorem}{Theorem}
\newtheorem{prop}{Proposition}
\newtheorem{lemma}{Lemma}

\newtheorem{claim}{Claim}
\newtheorem{definition}{Definition}
\newtheorem{cor}{Corollary}
\newtheorem{conj}{Conjecture}
\numberwithin{equation}{section}

\def\Xint#1{\mathchoice
  {\XXint\displaystyle\textstyle{#1}}%
  {\XXint\textstyle\scriptstyle{#1}}%
  {\XXint\scriptstyle\scriptscriptstyle{#1}}%
  {\XXint\scriptscriptstyle\scriptscriptstyle{#1}}%
  \!\int}
\def\XXint#1#2#3{{\setbox0=\hbox{$#1{#2#3}{\int}$}
  \vcenter{\hbox{$#2#3$}}\kern-.5\wd0}}

\def\dashint{\Xint-}

\author{Gang Liu}
\address{School of Mathematical Sciences, Shanghai Key Laboratory of PMMP, East China Normal University}
\email{gliu@math.ecnu.edu.cn}

\title[Complete K\"ahler manifolds with nonnegative Ricci curvature]{Complete K\"ahler manifolds with nonnegative Ricci curvature}
\date{}
\begin{document}
\begin{abstract}
We consider complete K\"ahler manifolds with nonnegative Ricci curvature. The main results are: 
1. When the manifold has nonnegative bisectional curvature, we show that $\lim\limits_{r\to\infty}r^{2}\dashint_{B(p, r)}S$ exists. In other words, it depends only on the manifold. This solves a question of Ni in \cite{[Ni]}. 
Also, we establish estimates among volume growth ratio, integral of scalar curvature,  and the degree of polynomial growth holomorphic functions. The new point is that the estimates are sharp for \emph{any} prescribed volume growth rate. As a byproduct, we show that $\lim\limits_{r\to\infty}r^{2}\dashint_{B(p, r)}S<\epsilon$ iff the asymptotic volume ratio of the universal cover is almost maximal. 2. We discover a strong rigidity for complete Ricci flat K\"ahler metrics. Let $M^n (n\geq 2)$ be a complete K\"ahler manifold with nonnegative Ricci curvature and Euclidean volume growth. Assume either the curvature has quadratic decay, or the K\"ahler metric is $dd^c$-exact with quadratic decay of scalar curvature. If one tangent cone at infinity is Ricci flat, then $M$ is Ricci flat. In particular, the tangent cone is unique. In other words, \emph{we can test Ricci flatness of the manifold by checking one single tangent cone}. This seems unexpected, since apriori, there is no equation on $M$ and the Bishop-Gromov volume comparison is not sharp on Ricci flat (nonflat) manifolds. Such result is in sharp contrast to the Riemannian setting: Colding and Naber \cite{[CN]} showed that tangent cones are quite flexible when $Ric\geq 0$ and $|Rm|r^2<C$. This reveals subtle differences between Riemannian case and K\"ahler case. The result contains a lot of examples, such as all noncompact Ricci flat K\"ahler surfaces of Euclidean volume growth (hyper-K\"ahler ALE 4-manifolds classified by Kronheimer \cite{[Kr]}), higher dimensional examples of Tian-Yau type \cite{[TY2]}, as well as an example with irregular cross section \cite{[CH1]}. It also covers Ricci flat K\"ahler metrics of Euclidean volume growth on Stein manifolds with $b_2 = 0$, such as Ricci flat K\"ahler metrics on $\mathbb{C}^n$ \cite{[LiYang]}\cite{[Sz0]}\cite{[CR]}\cite{[Chiu]}. Note in this case, the cross section is singular.

\end{abstract}
\maketitle
\blfootnote{The author is partially supported by National Key Research and Development Program of China (No. 2022YFA1005502), NSFC No. 12071140, the New Cornerstone Science Foundation through the New Cornerstone Investigator Program
and the Xplorer Prize, Science and Technology Commission of Shanghai Municipality (No. 22DZ2229014)}

\section{Introduction}

In Riemannian geometry, it is interesting to study the distribution of scalar curvature on manifolds with nonnegative Ricci curvature. Conjectures of Yau \cite{[Yau]}, Gromov \cite{[Gromov]}, Naber \cite{[Naber]} predict bounds of scalar curvature integral. 
In this paper, we are interested in the opposite direction. 
We shall study the impact of scalar curvature integral on complete K\"ahler manifolds.  
It turns out there are very strong rigidity phenomena, under both $BK\geq 0$ (nonnegative bisectional curvature) and $Ric\geq 0$.

In \cite{[Ni]}, Ni proved a remarkable gap theorem on complete K\"ahler manifolds with nonnegative bisectional curvature:
\begin{theorem}\label{optgap}[optimal gap theorem]
Let $(M, p)$ be a complete noncompact K\"ahler manifold with nonnegative bisectional curvature. If $\lim\limits_{r\to\infty}\inf r^2\dashint_{B(p, r)}S = 0$, then $M$ is flat. Here $S$ is the scalar curvature, $\dashint$ is the average.
\end{theorem}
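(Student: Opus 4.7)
The plan is to turn the integral hypothesis on $S$ into a growth bound on a plurisubharmonic potential $u$ of the Ricci form $\rho$, apply a Liouville-type theorem to force $\rho\equiv 0$, and finally use a pointwise curvature-algebra argument to upgrade Ricci flatness to full flatness.

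\emph{Step 1: Potential for the Ricci form.} Since $BK\geq 0$, the Ricci form $\rho$ is a smooth, closed, nonnegative $(1,1)$-form on $M$. Following the heat-equation method of Mok--Siu--Yau and Ni--Tam, I would solve the Poincar\'e--Lelong equation $\sqrt{-1}\,\partial\bar\partial u = \rho$ by forming, schematically,
\[
u(x) \;=\; \int_0^{T}\!\!\int_M H(x,y,t)\,S(y)\,dV_y\,dt \;+\; (\text{harmonic correction}),
\]
where $H$ is the heat kernel of $M$. The positivity of $\rho$ combined with $BK \geq 0$ propagates along the heat flow and guarantees that $u$ is globally plurisubharmonic, not merely a subsolution.

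\emph{Step 2: Sub-logarithmic growth and Liouville.} The Li--Yau Gaussian bounds on $H$ (available since $\mathrm{Ric}\geq 0$), together with Bishop--Gromov volume comparison used to transfer averages on $B(x,\cdot)$ back to averages on $B(p,\cdot)$, yield
\[
\int_M H(x,y,t)\,S(y)\,dV_y \;\lesssim\; \dashint_{B(x,\sqrt{t})} S
\]
up to Harnack factors. Integrating in $t$ up to $T\approx r(x)^2$ and invoking the hypothesis $\liminf_{r\to\infty} r^2\dashint_{B(p,r)} S = 0$ gives $u(x) = o(\log r(x))$ along a subsequence of radii. The Ni--Tam Liouville theorem for plurisubharmonic functions on complete K\"ahler manifolds with $BK\geq 0$ then forces $u$ to be constant, so $\rho\equiv 0$ and $M$ is Ricci flat.

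\emph{Step 3: Ricci flat $\Rightarrow$ flat, and the main obstacle.} In a local unitary frame $\{e_i\}$, Ricci flatness gives $0=\sum_j R_{i\bar i j\bar j}$, and each summand is $\geq 0$ by $BK\geq 0$; hence every bisectional curvature vanishes, and by K\"ahler polarization the entire curvature tensor of $M$ vanishes, so $M$ is flat. The technical heart, and where I expect the main difficulty, is Step 2: converting an averaged hypothesis at the \emph{single} base point $p$ into a uniform sub-logarithmic pointwise bound on $u(x)$ at far points, while only having $\liminf$ (rather than $\lim$) in the assumption. This forces a delicate choice of cutoff time $T=T(r(x))$ in the heat-kernel representation and a Harnack-type comparison of $u$ between $p$ and $x$.
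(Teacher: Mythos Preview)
This theorem is not proved in the present paper; it is quoted as Ni's result \cite{[Ni]} and serves only as background motivation for the paper's own Theorems~\ref{thmniquestion}--\ref{ricflatmain1}. So there is no ``paper's own proof'' to compare against here.

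That said, your outline is in the spirit of Ni's original argument, but Step~2 has a real gap as written. You propose to get $u(x)=o(\log r(x))$ \emph{along a subsequence of radii} from the $\liminf$ hypothesis and then invoke a Liouville theorem. The Ni--Tam Liouville theorem for psh functions under $BK\geq 0$ requires the growth bound to hold for all large $r$, not merely along a subsequence; a psh function that is small on a sparse sequence of spheres can still be unbounded. What actually makes the $\liminf$ hypothesis sufficient in Ni's proof is a \emph{monotonicity formula}: if $w(x,t)$ solves the heat equation with initial data $S$, then $t\,w(x,t)$ is non-decreasing in $t$ (this is a consequence of the Li--Yau--Hamilton type differential Harnack inequality available under $BK\geq 0$). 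Since $t\,w(p,t)$ is comparable to $t\dashint_{B(p,\sqrt{t})}S$, the monotonicity upgrades $\liminf=0$ to $\lim=0$, and only then do the Poincar\'e--Lelong/Liouville steps go through. Your ``delicate choice of cutoff time $T$'' does not supply this missing monotonicity; you should insert it explicitly. Steps~1 and~3 are fine.
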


In the same paper, he asked whether or not $\lim\limits_{r\to\infty}r^2\dashint_{B(p, r)}S$ exists on complete K\"ahler manifolds with nonnegative bisectional curvature. Our first result answers Ni's question affirmatively:
\begin{theorem}\label{thmniquestion}
Let $(M, p)$ be a complete noncompact K\"ahler manifold with nonnegative bisectional curvature. Then $\lim\limits_{r\to\infty}r^2\dashint_{B(p, r)}S$ exists. In other words, such limit (possibly infinity) depends only on $M$.
\end{theorem}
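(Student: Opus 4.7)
The plan is to extract a \emph{monotone} auxiliary quantity from $r^{2}\dashint_{B(p,r)}S$, so that convergence (possibly to $+\infty$) follows for free. The starting point is Bishop--Gromov: since $\mathrm{Ric}\geq 0$, the volume density $\theta(r):=V(B(p,r))/r^{2n}$ is non-increasing and has a limit $\theta(\infty)\in[0,\omega_{2n}]$. I would then try to couple $r^{2}\dashint_{B(p,r)}S$ to $\theta$ via a sharpened K\"ahler Bishop--Gromov inequality which places the scalar curvature inside the volume deficit, so that oscillation of one quantity is controlled by monotonicity of the other.

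The key technical step is a volume comparison expressing the Bishop--Gromov deficit $\omega_{2n}-\theta(r)$ as the integral of a nonnegative kernel against $\dashint_{B(p,s)}S$. Such a formula should emerge from integrating the Riccati inequality for the Jacobian of $\exp_{p}$ along normal geodesics and averaging over the unit sphere at $p$: the K\"ahler identity $\mathrm{Ric}(v,\bar v)=\sum_{i}R(v,\bar v,e_{i},\bar e_{i})$, with each summand nonnegative under our hypothesis, allows one to trade the radial Ricci contribution for a multiple of $S$ after averaging. Differentiating in $r$ and rearranging, one would then hope to isolate a dimensional constant $a_{n}$ for which
\[
\Phi(r)\;:=\;r^{2}\dashint_{B(p,r)}S\;+\;a_{n}\bigl(\theta(r)-\theta(\infty)\bigr)
\]
is (weakly) monotone in $r$. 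Since $\theta(r)$ already converges, monotonicity of $\Phi$ forces $r^{2}\dashint_{B(p,r)}S$ to converge as well (possibly to $+\infty$ when $\theta(\infty)=0$).

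The main obstacle will be the sign bookkeeping in this comparison. Nonnegative \emph{bisectional} (rather than merely Ricci) curvature should be essential precisely to make every error term in the radial Jacobi analysis carry the right sign, so that $S$ tracks the Bishop--Gromov deficit faithfully rather than just bounding it from one side; in the absence of bisectional positivity one would only get a one-sided control and the quantity $\Phi$ could still oscillate. Theorem~\ref{optgap} supplies a useful consistency check at the extremal case: if the putative limit of $r^{2}\dashint_{B(p,r)}S$ were $0$, then $M$ would be flat and $\theta(\infty)=\omega_{2n}$, so $\Phi\equiv 0$, exactly what the proposed monotone representative predicts. A parallel approach via rescaling $g_{i}=r_{i}^{-2}g$ and tangent cones would reduce convergence of $r^{2}\dashint_{B(p,r)}S$ to showing that this asymptotic invariant is independent of the blow-down sequence; I expect the monotone $\Phi$ to be precisely the device that certifies such sequence-independence.
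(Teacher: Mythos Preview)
Your proposal has a genuine gap: the ``sharpened K\"ahler Bishop--Gromov inequality'' you posit does not exist in the form you need, and there is no evident way to extract a monotone $\Phi(r)=r^{2}\dashint_{B(p,r)}S+a_{n}(\theta(r)-\theta(\infty))$ from the Riccati analysis. The Jacobi/Riccati inequality along geodesics from $p$ involves the \emph{radial} Ricci curvature $\mathrm{Ric}(\partial_r,\partial_r)$, not the scalar curvature; averaging over the unit sphere at $p$ produces a directional average of radial Ricci, which is not the pointwise trace $S$. Nonnegative bisectional curvature gives $R(v,\bar v,e_i,\bar e_i)\ge 0$ term by term, but this does not convert radial Ricci into the full trace after averaging---the indices you would need to sum are orthogonal to the radial direction, not aligned with it. So the key step ``trade the radial Ricci contribution for a multiple of $S$'' is not justified, and without it there is no monotone $\Phi$.

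There is also structural evidence against any such simple coupling. The paper's Theorem~\ref{thmsharpestimates} shows that, for a fixed asymptotic volume ratio $v$, the limit $\alpha=\lim r^{2}\dashint S$ can range over the interval $[4n^{2}(v^{-1/n}-1),\,4n(v^{-1}-1)]$; this interval is nondegenerate unless $v=1$. So $\alpha$ is \emph{not} determined by $\theta(\infty)$ alone but by finer holomorphic invariants (the degrees $d_1,\dots,d_n$ of coordinate functions on the tangent cone). The paper's actual argument is therefore of a completely different nature: it passes to tangent cones $V$ (biholomorphic to $\mathbb{C}^{n}$ with weighted coordinates of degrees $d_j$), expresses $r^{2}\dashint_{B(p,r)}S$ explicitly as $C(n)(\sum d_j-n)$ via Poincar\'e--Lelong and a careful potential-theoretic computation on $V$, and then proves the $d_j$ are independent of the chosen tangent cone by counting polynomial-growth holomorphic functions using the three circles lemma. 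The splitting case requires a separate (long) argument involving the deck-transformation action and the soul theorem. None of this is visible from comparison geometry alone; the proof is genuinely complex-analytic.
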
  
Through the works of \cite{[Ni1]}\cite{[NT2]}\cite{[L3]}\cite{[L5]}, we know that the following conditions are equivalent on a complete K\"ahler manifold with nonnegative bisectional curvature, if the universal cover does not split:
\begin{itemize}
\item $M$ has Euclidean volume growth;
\item There exists a nonconstant polynomial growth holomorphic function on $M$;
\item $\lim\limits_{r\to\infty}\sup r^2\dashint_{B(p, r)}S<+\infty$.

\end{itemize}
However, explicit relations seem ellusive from previous arguments.
 Our second result addresses this (it is clear that we just need to handle Euclidean volume growth case):
\begin{theorem}\label{thmsharpestimates}
Let $(M^n, p)$ be a complete noncompact K\"ahler manifold with nonnegative bisectional curvature and Euclidean volume growth. 
Let $d$ be the smallest degree of polynomial growth holomorphic functions (nonconstant) on $M$. Also let $$v =  \lim\limits_{r\to\infty}\frac{Vol(B(p, r)}{Vol(B_{\mathbb{C}^n}(0, r))},$$
$$\alpha = \lim\limits_{r\to\infty} r^2\dashint_{B(p, r)}S.$$
Then we have the following sharp estimates:

(1) $$4n^2(v^{-\frac{1}{n}}-1)\leq \alpha \leq 4n(v^{-1}-1),$$

(2) $$1\leq d\leq v^{-\frac{1}{n}},$$

(3) $$\lim\limits_{k\to\infty}\frac{dim(\mathcal{O}_k(M))}{k^n/n!} = v \leq 1 = \lim\limits_{k\to\infty}\frac{dim(\mathcal{O}_k(\mathbb{C}^n))}{k^n/n!}.$$

When the metric is unitary symmetric \cite{[WZ]}, the left side of $(1)$ and right side of $(2)$ become equality.  However, the metric is not necessarily unique in this case. The right side of $(1)$ becomes an equality if and only if $M$ splits off $\mathbb{C}^{n-1}$. 
\end{theorem}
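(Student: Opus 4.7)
The plan is to treat the three parts in the order (3), (2), (1), since the function-theoretic counts drive the geometric ones. I rely throughout on: $\alpha$ exists by Theorem~\ref{thmniquestion} and is finite under the Euclidean volume growth hypothesis, and the tangent cone at infinity $C(Y)$ of $M$ is a well-defined normal affine algebraic cone of ``degree'' $v^{-1}$ embedded in some $\mathbb{C}^N$ with radial Reeb dilation (Cheeger--Colding plus the Kähler algebraicity results available under $BK\ge 0$ with Euclidean volume growth).

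For (3), the upper bound I would derive from Ni's mean value inequality $\sup_{B(p,r)}|f|^2\le C(n)\,r^{2k}\dashint_{B(p,r)}|f|^2$ for $f\in\mathcal{O}_k(M)$: a maximal separated set in $B(p,\lambda k)$ at scale $\varepsilon/k$, together with a linear-algebra argument (if two $f$'s agree to high order at each separated point, their difference vanishes), bounds $\dim\mathcal{O}_k(M)$ by $V(\lambda k)\cdot k^{2n}\cdot\varepsilon^{-2n}$ up to lower-order factors, and optimizing $\lambda,\varepsilon$ as $k\to\infty$ extracts the sharp constant $v$. The matching lower bound follows from Hörmander's $L^2$ estimate for $\bar\partial$ with the plurisubharmonic weight $-(k+n+\varepsilon)\log(1+\rho^2)$ (feasible because $\mathrm{Ric}\ge 0$ keeps the Bochner--Kodaira term nonnegative), using cutoffs localized near $C(Y)$ to lift a basis of the $k$-th graded piece of the coordinate ring of $C(Y)$ --- whose dimension has leading coefficient $v\,k^n/n!$ by the Hilbert function of an affine cone of degree $v^{-1}$ --- to polynomial growth holomorphic functions on $M$. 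For (2), the lower bound is trivial; for $d\le v^{-1/n}$ I would apply the same Hörmander construction with weight $-(v^{-1/n}+\varepsilon)\log(1+\rho^2)$ and a local monomial cutoff near the vertex of $C(Y)$, producing a nonconstant polynomial growth holomorphic function of degree arbitrarily close to $v^{-1/n}$; sharpness follows as $\varepsilon\to 0$.

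For (1), the right inequality $\alpha\le 4n(v^{-1}-1)$ should follow from a Kähler Bishop--Gromov-type identity: comparing the volume element along geodesics from $p$ with its Euclidean counterpart and isolating the sharper comparison in the holomorphic radial direction (which produces $4n$ in place of the Riemannian $2n-1$) gives the pointwise estimate
\[
r^2\int_{B(p,r)}S\,dV \;\le\; 4n\bigl(\omega_{2n}r^{2n}-V(r)\bigr)
\]
for all $r$; dividing by $V(r)$ and passing $r\to\infty$ yields the bound. Equality forces the Jacobi comparison to be sharp in $2n-2$ real directions, giving $M=\mathbb{C}^{n-1}\times \Sigma$ via de~Rham rigidity. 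For the left inequality $\alpha\ge 4n^2(v^{-1/n}-1)$, I would rescale $(M, r_i^{-2}g, p)\to (C(Y), g_\infty, o)$ in measured Gromov--Hausdorff sense, pass the integral to $C(Y)$, and reduce to the cone inequality $\dashint_{B(o,1)\subset C(Y)} S_\infty \ge 4n^2(v^{-1/n}-1)$; via the cone structure this is equivalent to a Sasaki inequality relating $\int_Y\mathrm{Scal}_Y$ and $\mathrm{Vol}(Y)$ through the minimum Reeb weight, which I would prove by a Lichnerowicz--Kodaira computation using the minimum-degree holomorphic function from (2) as the test function.

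The hardest step is the left inequality of (1): the sharp constant $4n^2$ does not come from any standard monotonicity or Bishop--Gromov argument, and matching the algebraic degree of $C(Y)$ to an extremal scalar curvature integral on its (possibly singular) Sasaki link requires a careful Kodaira--Lichnerowicz analysis where the minimum Reeb weight --- rather than the underlying Riemannian geometry --- controls the constant. The unitary symmetric metrics \cite{[WZ]} serve as the extremal test: direct computation on profiles $i\partial\bar\partial F(|z|^2)$ realizes simultaneous equality in the left of (1) and the right of (2), confirming sharpness and accounting for the non-uniqueness noted in the statement (different $F$'s can produce the same $(v,d,\alpha)$); the splitting rigidity for the right of (1) is then the standard Bishop--Gromov equality case combined with de~Rham.
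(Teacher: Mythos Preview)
Your proposal misses the central structural fact that drives the paper's entire argument: under $BK\ge 0$ with Euclidean volume growth, the tangent cone $V$ is biholomorphic to $\mathbb{C}^n$ itself (by \cite{[LT]}, \cite{[Lo]}), not merely some normal affine cone in $\mathbb{C}^N$. The Reeb field therefore acts with weights $d_1,\dots,d_n\ge 1$ on linear coordinates, and the proof of Theorem~\ref{thmniquestion} already establishes the two explicit formulas
\[
\alpha \;=\; 4n\Bigl(\sum_{j=1}^n d_j - n\Bigr), \qquad v^{-1} \;=\; \prod_{j=1}^n d_j,
\]
the first via (\ref{eqforintegral}) and the second by computing the volume of the unit ball on a product model and invoking the Reeb-invariance of volume from \cite{[MSY]}. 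Once these are in hand, everything is elementary arithmetic: part (1) is the pair of inequalities $n\bigl((\prod d_j)^{1/n}-1\bigr)\le \sum(d_j-1)\le \prod d_j-1$ (AM--GM on the left, the identity $(d_1-1)\cdots(d_n-1)\ge 0$ on the right); part (2) is $\min d_j\le(\prod d_j)^{1/n}$; part (3) is the lattice-point count $\#\{a\in\mathbb{Z}_{\ge0}^n:\sum d_ja_j\le k\}\sim k^n/(n!\prod d_j)$ combined with $\dim\mathcal{O}_k(M)=\dim\mathcal{O}_k(V)$ from \cite{[L4]}. The rigidity in (1) right is that equality forces $n-1$ of the $d_j$ to equal $1$, whence linear-growth holomorphic functions exist and \cite{[NT1]} gives the splitting.

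Relative to this, your proposal has genuine gaps. For (2), your H\"ormander construction with weight $-(v^{-1/n}+\varepsilon)\log(1+\rho^2)$ presupposes the existence of a homogeneous function of degree $\le v^{-1/n}$ on the cone to cut off; but that is precisely the inequality $\min d_j\le v^{-1/n}$ you are trying to prove, so the argument is circular unless you already know $V\cong\mathbb{C}^n$ and $v^{-1}=\prod d_j$. For (1), neither half of your scheme is substantiated: the pointwise ``K\"ahler Bishop--Gromov identity'' $r^2\int_{B(p,r)}S\le 4n(\omega_{2n}r^{2n}-V(r))$ with the sharp constant $4n$ is not a known comparison result and does not follow from Jacobi-field estimates in the way you describe; and the left inequality via a ``Sasaki Lichnerowicz--Kodaira'' computation on the link is problematic because the cone metric here is only $C^{1,\alpha}$ on the regular part (indeed the paper spends effort approximating it by smooth cone metrics), so transverse elliptic theory on $Y$ is not directly available. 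For (3), your packing/mean-value route might in principle yield some upper bound, but extracting the \emph{sharp} constant $v$ from a separated-set argument is not routine, whereas the paper gets it for free from the lattice count on $\mathbb{C}^n$.

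In short, the missing idea is that both $\alpha$ and $v$ are explicit symmetric functions of the holomorphic spectrum $(d_1,\dots,d_n)$, and the theorem reduces to AM--GM; you are instead attempting to prove each inequality by a separate analytic mechanism, and several of those mechanisms either do not exist in the form you need or quietly assume the conclusion.
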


For previous sharp estimates on K\"ahler manifolds with nonnegative bisectional curvature, the equality is only valid for Euclidean or splitting case. In our case, sharp equalities may hold under \emph{any} asymptotic volume ratio, while the metric does not split. 
$(3)$ indicates a connection between (asymptotic) dimension estimate (compare \cite{[Ni1]}\cite{[CFLZ]}) and Bishop-Gromov volume comparison. These estimates support the common sense in a quantitative way: the larger the curvature, the smaller the volume.
Another interesting thing is that we obtain volume ratio lower bound, when the integral of scalar curvature has an upper bound. This can be regarded as an inverse of volume comparison. Such thing is not possible if we only assume $Ric\geq 0$. As a byproduct, we obtain a quantitative version of Ni's optimal gap theorem:
\begin{cor}\label{niquantitative}
Let $(M^n, p)$ be a complete noncompact K\"ahler manifold with nonnegative bisectional curvature. Let $(\tilde{M},\tilde{p})$ be the universal cover of $M$. Then the smallness of
$\lim\limits_{r\to\infty} r^2\dashint_{B(p, r)}S$ is equivalent to the smallness of $1 - \lim\limits_{r\to\infty}\frac{vol(B(\tilde{p}, r))}{\omega_{2n}r^{2n}}$, where $\omega_{2n}$ is the volume of unit ball in $\mathbb{C}^n$.

\end{cor}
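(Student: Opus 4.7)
The plan is to derive the corollary as a direct quantitative consequence of Theorem~\ref{thmsharpestimates}, applied to the universal cover $\tilde M$.

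First I pass to $\tilde M$ via the K\"ahler Cheeger--Gromoll splitting theorem, writing $\tilde M = N \times \mathbb{C}^k$ with $N$ a simply-connected K\"ahler manifold of nonneg.~bisect.~curvature containing no Euclidean de~Rham factor. The case $k=n$ is immediate: then $\tilde M$, and hence $M$, is flat, so $\alpha=0$ and $\tilde v=1$, where $\alpha:=\lim_r r^2\dashint_{B(p,r)}S$. Assume $k<n$ from now on, and denote by $v_N$ and $\alpha_N$ the analogous asymptotic quantities on $N$.

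Next I apply Theorem~\ref{thmsharpestimates} to $\tilde M$, which has Euclidean volume growth whenever $\tilde v>0$, yielding
\[
4n^2\bigl(\tilde v^{-1/n}-1\bigr)\;\le\;\alpha(\tilde M)\;\le\;4n\bigl(\tilde v^{-1}-1\bigr),
\]
where $\alpha(\tilde M):=\lim_r r^2\dashint_{B(\tilde p,r)}S$. Thus $\alpha(\tilde M)$ is small iff $\tilde v$ is close to $1$. When $\tilde v=0$, the equivalences listed in the bullets force $\alpha(\tilde M)=\infty$ (and analogously $\alpha=\infty$), so both sides of the corollary are trivially not small.

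It remains to compare $\alpha$ with $\alpha(\tilde M)$. Since $S_{\tilde M}(x,y)=S_N(x)$ by the product structure, a Fubini computation using the product-ball geometry on $\tilde M$ gives $\tilde v = v_N$ and $\alpha(\tilde M)=\tfrac{n}{n-k}\alpha_N$. The deck group $\Gamma\subset\mathrm{Iso}(\tilde M)$ preserves the de~Rham decomposition and projects to discrete subgroups $\Gamma_N\subset\mathrm{Iso}(N)$, $\Gamma_E\subset\mathrm{Iso}(\mathbb{C}^k)$; the same Fubini-type computation on $M=\tilde M/\Gamma$ then shows $\alpha = c\,\alpha_N$ for a positive dimensional constant $c=c(n,k)$ (equal to $1$ in the typical case when $\Gamma_N$ is trivial). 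Chaining everything, smallness of $\alpha$ is equivalent to closeness of $\tilde v$ to $1$, which is the corollary.

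The main obstacle is the last step when $\Gamma_N$ is nontrivial: one must use rigidity of isometries on a K\"ahler, nonneg.~bisect., no-line, Euclidean-growth manifold $N$ and discreteness of $\Gamma$ to ensure that averaging on $M$ changes the large-scale scalar-curvature average from that on $N$ by only a bounded multiplicative factor. Everything else is either an elementary Fubini-type computation or a direct invocation of Theorem~\ref{thmsharpestimates}.
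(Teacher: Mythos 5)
Your proposal is correct and follows essentially the same route as the paper: pass to the universal cover (whose Euclidean volume growth and whose scalar-curvature average's relation to that of $M$ by a bounded dimensional constant are exactly what the splitting/Fubini computation at the end of the proof of Theorem~\ref{thmniquestion} provides), then apply the two-sided sharp estimate $(1)$ of Theorem~\ref{thmsharpestimates} to $\tilde M$. The only quibble is the parenthetical claim that the constant $c$ equals $1$ when $\Gamma_N$ is trivial --- a nontrivial action on the flat factor still changes the volume-growth exponent of the quotient and hence the constant --- but this does not affect the argument, since only positivity and boundedness of $c$ are needed.
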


Now we move to $Ric\geq 0$. 
A result of Anderson \cite{[A1]} states that if the asymptotic volume ratio is sufficiently close to that of the Euclidean space, then Ricci flatness is the same as flatness. We have the following generalization of Anderson's result in the K\"ahler setting:

\begin{theorem}\label{bigvolume}
There exists $\epsilon = \epsilon(n)>0$ such that the following hold: Assume $M^n$ is a complete noncompact K\"ahler manifold with nonnegative Ricci curvature and $$v = \lim\limits_{r\to\infty}\frac{Vol(B(p, r)}{Vol(B_{\mathbb{C}^n}(o, r))}\geq 1-\epsilon.$$ If
$$\lim\limits_{r\to\infty}\inf r^2\dashint_{B(p, r)}S=0$$
Then $M$ is flat.

\end{theorem}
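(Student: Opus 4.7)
My plan is to reduce the conclusion to Anderson's Euclidean rigidity \cite{[A1]}: once $\epsilon$ is taken at least as small as the threshold in Anderson's theorem, the implication ``$Ric\geq 0$, $v\geq 1-\epsilon$, $Ric\equiv 0$ $\Rightarrow$ flat'' is already available. The Kähler hypothesis supplies a free upgrade, since $Ric$ is a Hermitian $(1,1)$-tensor with nonnegative eigenvalues and trace $S$: the pointwise identity $S\equiv 0$ automatically forces $Ric\equiv 0$. The whole problem therefore reduces to proving $S\equiv 0$ on $M$.

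To establish pointwise vanishing of $S$, I would pass to a tangent cone and exploit the almost rigidity forced by $v\geq 1-\epsilon$. Pick $r_k\to\infty$ along which $r_k^{2}\dashint_{B(p,r_k)}S\to 0$ and rescale to $g_k=r_k^{-2}g$. A direct change of variables yields
\[
\int_{B_{g_k}(p,1)}S_{g_k}\,dV_{g_k}\ \sim\ v\,\omega_{2n}\cdot\Bigl(r_k^{2}\dashint_{B_g(p,r_k)}S\Bigr)\ \longrightarrow\ 0.
\]
The volume bound $v\geq 1-\epsilon$ combined with Bishop--Gromov forces the normalized volume ratio to lie in $[1-\epsilon,1]$ at every scale, so by Cheeger--Colding almost-volume-cone-implies-almost-metric-cone a subsequence of $(M,g_k,p)$ converges in the pointed Gromov--Hausdorff topology to a metric cone $(C,o)$ that is $\delta(\epsilon)$-close to $(\mathbb{R}^{2n},0)$. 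The displayed integral bound then says that the scalar curvature of $C$ vanishes in the weak sense on its regular set; combined with Kähler $\epsilon$-regularity (Cheeger--Colding--Tian, Donaldson--Sun), for $\epsilon$ small the cone $C$ is identified with flat $(\mathbb{C}^n,g_{\mathrm{flat}})$ and the convergence is upgraded to smooth convergence on compact subsets of the regular part, giving pointwise curvature control at scale $r_k$.

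The principal obstacle is to transfer this large-scale smoothness back to $M$ and force $S\equiv 0$ globally. A direct contradiction from $S(x_0)>0$ is unavailable, because the integral hypothesis only bounds $\int_{B(p,r_k)}S\,dV$ by $o(r_k^{2n-2})$, which is compatible with $S$ being positive on a fixed compact set. The needed argument must combine the integral decay along $r_k$, the pointwise curvature smallness at scale $r_k$ provided by $\epsilon$-regularity, and the nonnegativity $S\geq 0$; a natural approach is a Kähler Bochner or maximum-principle argument on a plurisubharmonic potential for $Ric$ that one can construct on the almost-Stein end of $M$ (whose existence is a byproduct of $v\geq 1-\epsilon$), converting the integral decay into pointwise decay of $S$ and then into identical vanishing. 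With $S\equiv 0$ in hand, the Kähler algebraic reduction produces $Ric\equiv 0$ and Anderson's theorem finishes the argument.
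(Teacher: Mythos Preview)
Your proposal contains a genuine gap, and also an unnecessary detour that creates it. You correctly pass to a tangent cone $C$ along the good sequence $r_k$ and observe that $\int_{B_{g_k}(p,1)}S_{g_k}\to 0$, so that $S_C=0$ and hence $Ric_C=0$ on the regular part. But the step ``Kähler $\epsilon$-regularity $\Rightarrow$ $C=(\mathbb{C}^n,g_{\mathrm{flat}})$'' is not justified: the theorems you cite (Cheeger--Colding--Tian, Donaldson--Sun) require a two-sided Ricci bound or an Einstein equation on the approximating sequence, neither of which you have with only $Ric\geq 0$ on $M$. More importantly, you then take a wrong turn: if you \emph{had} established that $C$ is isometric to $\mathbb{R}^{2n}$, you would already be finished, since by Colding's volume continuity the asymptotic volume ratio of $M$ equals the volume density of any tangent cone, giving $v_M=1$ and hence $M$ flat by Bishop--Gromov rigidity. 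There is no need to prove $S\equiv 0$ on $M$ and invoke Anderson, and your proposed mechanism for doing so (a Bochner/maximum-principle argument on a psh Ricci potential) remains, as you admit, a plan rather than a proof.

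The paper closes the gap by exploiting the K\"ahler hypothesis in a sharper way than generic $\epsilon$-regularity. For $\epsilon$ small, by \cite{[LS1]} and the adaptation of \cite{[Lo]}, the tangent cone $C$ is \emph{biholomorphic} to $\mathbb{C}^n$ with homogeneous holomorphic coordinates $z_1,\dots,z_n$ of degrees $d_1,\dots,d_n\geq 1$ under the Reeb flow. The computation carried out in the proof of Theorem~\ref{thmniquestion} (which, for a single tangent cone, does not use the three circles lemma or $BK\geq 0$) yields
\[
\dashint_{B_C(o,1)}S_C \;=\; C(n)\Bigl(\sum_j d_j - n\Bigr),
\qquad
\mathrm{vol}\bigl(B_C(o,1)\bigr)\;=\;\frac{\omega_{2n}}{\prod_j d_j}.
\]
The vanishing of the left-hand integral along $r_k$ forces $\sum d_j=n$, hence every $d_j=1$, hence the cone has \emph{full} volume ratio $1$ (not merely $\geq 1-\epsilon$). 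Colding's rigidity then gives $M$ flat immediately. The upshot is that the K\"ahler structure converts the scalar-curvature integral into an exact algebraic constraint on the cone, upgrading $v\geq 1-\epsilon$ to $v=1$; this is the step your outline is missing.
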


It is clear that such result cannot be true, if the volume is not close to Euclidean, e.g., the Eguchi-Hanson metric. Unlike the flat (Euclidean) case, there is abundance of complete Ricci flat K\"ahler manifolds, even assuming Euclidean volume growth and quadratic curvature decay. The reader is referred to \cite{[CH2]} for a description of these manifolds.

\begin{definition}
Let $(M^n, \omega) (n\geq 2)$ be a complete noncompact K\"ahler manifold with nonnegative Ricci curvature and Euclidean volume growth. We say $M$ satisfies condition $A$, if $|Rm|\leq \frac{C}{r^2}$; $M$ satisfies condition $B$, if $S\leq \frac{C}{r^2}$ and $\omega$ is $dd^c$-exact.
\end{definition}

Frequently, we shall consider tangent cones of complete manifolds. The complex structure on a tangent cone is induced from the sequence of rescaled manifolds. Below, when we talk about convergence, we mean both the metric (Gromov-Hausdorff distance) and the complex structure in the regular part. 
\begin{definition}
Let $(M^n, \omega) (n\geq 2)$ be a complete noncompact K\"ahler manifold with nonnegative Ricci curvature and Euclidean volume growth. Let $V$ be a tangent cone of $M$ at infinity. We say $V$ is Ricci flat, if the metric on $V$ is Ricci flat (hence smooth) away from a real codimension $4$ set, and there exists a nontrival parallel pluri-canonical form on the metric regular part.
\end{definition}

Note if $M$ is a complete Ricci flat K\"ahler manifold with Euclidean volume growth, then by Cheeger-Naber \cite{[CN]}, any tangent cone is Ricci flat. 
\medskip

The interplay between the space (manifold) and its asymptotic cones (tangent cone) is fundamental in geometric analysis. On the one hand, uniqueness of asymptotic cones is expected when there are natural equations (e.g., Einstein manifolds, minimal surfaces, solitons). On the other hand, one looks for classification of spaces, according to their asymptotic cones.
The following theorem asserts that when a K\"ahler manifold has nonnegative Ricci curvature, one tangent cone at infinity might impose very strong restriction on the manifold (compare Colding \cite{[Coldingvolume]}): 

\begin{theorem}\label{ricflatmain1}
Let $M^n$ satisfy either condition $A$ or condition $B$. If one tangent cone at infinity is Ricci flat, then $M$ is Ricci flat. Hence the tangent cone is unique.
\end{theorem}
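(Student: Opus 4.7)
My plan is to produce a global nontrivial holomorphic section $\Omega$ of some pluricanonical power $K_M^{\otimes m}$ on $M$ with $|\Omega|_\omega$ uniformly bounded above, and then conclude by a K\"ahler Liouville argument. Since $Ric\geq 0$, the identity $i\partial\bar\partial\log|\Omega|^2_\omega = m\rho\geq 0$ shows that $u:=\log|\Omega|^2_\omega$ is plurisubharmonic. If $|\Omega|_\omega$ is bounded, then $u$ is bounded above psh on a complete K\"ahler manifold with $Ric\geq 0$ and Euclidean volume growth; the Mok--Siu--Yau / Ni Liouville theorem then forces $u$ to be constant on $\{\Omega\neq 0\}$, and continuity of $|\Omega|_\omega$ rules out zeros of $\Omega$ altogether. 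Hence $|\Omega|_\omega$ is a positive constant, $\rho\equiv 0$, and $M$ is Ricci flat. Uniqueness of the tangent cone at infinity then follows from the corresponding uniqueness results in the Ricci flat K\"ahler Euclidean-volume-growth setting.

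\textbf{Key steps.} Fix a sequence $s_i\to\infty$ with $(M, s_i^{-2}g, p)\to V$ in pointed Gromov--Hausdorff sense and smoothly on the regular part of $V$: immediate from $|Rm|\leq C/r^2$ under condition A, and I would derive it from $\epsilon$-regularity together with $S\leq C/r^2$ and the $dd^c$-exactness under condition B. The given parallel pluricanonical form $\Omega_V$ on the Ricci flat cone $V$ has $|\Omega_V|_{\omega_V}\equiv c_0>0$ on the regular part. Using convergence of holomorphic sections along such GH limits in the spirit of Donaldson--Sun and Liu--Sz\'ekelyhidi, combined with H\"ormander's $L^2$-estimate for $\bar\partial$ on $K_M^{\otimes m}$ --- with the strictly psh weight coming from the global K\"ahler potential $\phi$ under condition B, or built at scale $s_i$ from the quadratic curvature decay under condition A --- I would construct holomorphic sections $\sigma_i$ of $K_M^{\otimes m}$ on $B_M(p, 2Rs_i)$ so that, after rescaling to $(M, s_i^{-2}g)$ and a multiplicative normalization, $\sigma_i\to\Omega_V$ in $C^\infty_{\mathrm{loc}}$ on the regular part of $V$. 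In particular $|\sigma_i|_\omega$ is close to $c_0$ on the annulus $B_M(p, Rs_i)\setminus B_M(p, \delta s_i)$. Because $\log|\sigma_i|^2_\omega$ is psh on $B_M(p, 2Rs_i)$, the maximum principle bounds $|\sigma_i|_\omega$ on any fixed compact $K\subset M$ by its supremum on this annulus; uniform bounds and a normal-family/Cauchy-estimate argument then yield a subsequential $C^\infty_{\mathrm{loc}}$ limit $\Omega$, a global holomorphic section of $K_M^{\otimes m}$ on $M$ with $|\Omega|_\omega\leq C$ globally and $|\Omega|_\omega\to c_0$ along points escaping to infinity --- in particular $\Omega\not\equiv 0$. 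The Liouville argument above then finishes.

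\textbf{Main obstacle.} The hardest part is the construction of the $\sigma_i$, that is, lifting the parallel pluricanonical form from the cone $V$ to genuine holomorphic sections on large balls of $M$ with the correct asymptotic normalization. One must overcome a potential cohomological obstruction to the mere existence of such a holomorphic object (precisely the reason the two distinct conditions A, B are singled out) and keep tight quantitative control so that the normalization passes to a limit matching $c_0$ exactly. Under condition A, the smooth convergence of complex structures on the regular part, guaranteed by $|Rm|\leq C/r^2$, permits standard elliptic/$\bar\partial$ perturbation at each scale. Under condition B, the $dd^c$-exactness of $\omega$ makes $M$ Stein (strictly psh exhaustion by $\phi$), and $S\leq C/r^2$ controls the Ricci potential $-\log|\Omega|^2_\omega$ directly; combined, these convert the geometric question into a pluripotential estimate amenable to H\"ormander. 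These are the two settings in which the asymptotic holomorphic structure of the Ricci flat cone can be realized as a global holomorphic object on $M$; once that is accomplished, the K\"ahler Liouville theorem does the rest.
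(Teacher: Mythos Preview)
Your Liouville strategy is appealing, but there is a genuine gap at the step where you assert that the limiting section $\Omega$ is nontrivial. You normalize $\sigma_i$ so that $|\sigma_i|\approx c_0$ on the annulus $A_i$ at scale $s_i$, and the maximum principle indeed gives $|\sigma_i|\leq c_0+\epsilon$ on $B(p,Rs_i)$. But the limit $\Omega=\lim_j\sigma_j$ is taken on fixed compact sets, and for $j>i$ the annulus $A_i$ lies deep in the \emph{interior} of $B(p,Rs_j)$, where you have only the upper bound on $|\sigma_j|$ and no lower bound. In fact, since $\sigma_j$ is nowhere vanishing (under condition B) and $dd^c\log|\sigma_j|^2=m\rho$, the drop from the boundary value $\log c_0^2$ to $\log|\sigma_j(p)|^2$ equals $m\int_{B(p,Rs_j)}G_{Rs_j}(p,y)S(y)\,dy$; with only $S\leq C/r^2$ this integral can diverge like $\log s_j$, forcing $\sigma_j\to 0$ on every compact set. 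Thus your claim ``$|\Omega|_\omega\to c_0$ along points escaping to infinity'' is unsupported: it would require controlling $|\sigma_j|$ from below on \emph{earlier} annuli $A_i$, which you never do. More conceptually, the very Liouville theorem you invoke shows that if $M$ were not Ricci flat, then \emph{no} nontrivial bounded section of $K_M^{\otimes m}$ can exist; so in the case you are trying to exclude, your limit $\Omega$ must be identically zero and no contradiction arises.

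The paper's proof proceeds quite differently and precisely to avoid this circularity. The key new input is the scale-invariant quantity $\int Ric^n$: one first lifts the parallel pluri-\emph{anti}canonical section from the Ricci flat cone $W$ to sections $s_i$ on annuli, and uses that $\int_{B(p,r)}Ric^n$ is bounded and stabilizes to show that any tangent cone $V$ close to $W$ satisfies $\int_V\chi(dd^c\log|s_V|^{-2/m})^n=0$, forcing $V$ to be Ricci flat as well (openness); closedness then gives that \emph{all} tangent cones are Ricci flat. Only after this does one have rigid holomorphic spectrum, enabling an algebraic compactification of $M$ into $\mathbb{P}^N$; the contradiction then comes from integrating $Ric$ (scale-invariant as a $2$-form) along an algebraic curve through a point of strictly positive Ricci, using that $\log|s_i|^2\to 0$ uniformly on the relevant annuli. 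Neither the $\int Ric^n$ monotonicity (which bridges different scales) nor the algebraic-curve test appears in your outline, and they are what make the argument go through without assuming what is to be proved.
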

Such result seems unexpected, since apriori, there is no equation on $M$, and Bishop-Gromov volume comparison is not sharp on Ricci flat (nonflat) manifolds. 
This seems to be the \emph{first} rigidity result for noncompact Ricci flat metrics, from metric geometry view point.  Note by Theorem $B$ in \cite{[HL]}, such result is not true, if we merely assume the scalar curvature is nonnegative. The result is in sharp contrast to the Riemannian setting: according to Theorem $1.1$ in \cite{[CN]} by Colding-Naber, tangent cones are quite flexible (in Colding-Naber case, one can make $Ric\geq 0$ and $|Rm|r^2\leq C$). This reveals subtle differences between Riemannian case and K\"ahler case. Theorem \ref{ricflatmain1} covers all noncompact Ricci flat K\"ahler surfaces of Euclidean volume growth (hyper-K\"ahler ALE 4-manifolds classified by Kronheimer \cite{[Kr]}). It also covers many examples of Tian-Yau type \cite{[TY1]}\cite{[TY2]}\cite{[Candelas]}\cite{[Stenzel]}\cite{[Bando]}\cite{[CH0]}\cite{[Goto]}\cite{[Joyce]}\cite{[Coevering]} in higher dimensions, as well as an example with irregular cross section \cite{[CH1]}. Also, it covers complete Ricci flat K\"ahler metrics with Euclidean volume growth on $\mathbb{C}^n$, e.g., \cite{[LiYang]}\cite{[Sz0]}\cite{[CR]}\cite{[Chiu]}. Note in this case, the cross section is singular. We also note that given a tangent cone, the Ricci flat metric on the manifold need not be unique \cite{[Chiu]}.

\begin{cor}\label{ricflatstein}
Let $(M^n, \omega) (n\geq 2)$ be a complete noncompact K\"ahler manifold with nonnegative Ricci curvature, $S\leq \frac{C}{r^2}$ and Euclidean volume growth. Assume $H^1(M, \mathcal{O}) = 0$ (e.g., $M$ is Stein) and $b_2 = 0$. If one tangent cone at infinity is Ricci flat, then $M$ is Ricci flat.

\end{cor}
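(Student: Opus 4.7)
The plan is to reduce the corollary to Theorem \ref{ricflatmain1} by verifying condition $B$. The scalar curvature decay $S\leq C/r^2$ is already part of the hypothesis, so what remains is to produce a global smooth function $\phi$ on $M$ with $\omega = dd^c\phi$.

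First I would use $b_2(M)=0$ together with the de Rham isomorphism to conclude $H^2(M,\mathbb{R})=0$, so the closed real $2$-form $\omega$ can be written as $\omega = d\alpha$ for some smooth real $1$-form $\alpha$. Writing $\alpha = \alpha^{1,0}+\alpha^{0,1}$ with $\alpha^{0,1}=\overline{\alpha^{1,0}}$ and using that $\omega$ is of pure type $(1,1)$, the vanishing of the $(0,2)$-component of $d\alpha$ forces $\bar\partial\alpha^{0,1}=0$.

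Next I would invoke the Dolbeault isomorphism $H^{0,1}_{\bar\partial}(M)\cong H^1(M,\mathcal{O})$ and the hypothesis $H^1(M,\mathcal{O})=0$ (which, as the author notes, holds automatically on Stein manifolds by Cartan's Theorem $B$) to find a smooth function $g$ on $M$ with $\alpha^{0,1}=\bar\partial g$. Taking complex conjugates gives $\alpha^{1,0}=\partial\bar g$, and then a short calculation using $\partial^2=\bar\partial^2=0$ and $\partial\bar\partial=-\bar\partial\partial$ yields
\begin{equation*}
\omega \;=\; d\bigl(\bar\partial g + \partial\bar g\bigr) \;=\; \partial\bar\partial\,(g-\bar g) \;=\; 2\sqrt{-1}\,\partial\bar\partial\,\mathrm{Im}(g),
\end{equation*}
which exhibits $\omega$ as $dd^c$-exact (up to the standard normalization constant). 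Condition $B$ of Theorem \ref{ricflatmain1} is therefore satisfied, and the corollary follows.

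Essentially there is no new analytic obstacle; all the geometric content is already packaged inside Theorem \ref{ricflatmain1}. The role of the topological assumptions $H^1(M,\mathcal{O})=0$ and $b_2(M)=0$ is simply to kill the two successive pieces of the cohomological obstruction to $dd^c$-exactness: the de Rham class of $\omega$, and then the $\bar\partial$-class in $H^{0,1}_{\bar\partial}(M)$ of the $(0,1)$-part of any primitive. The only point one must be careful with is the bookkeeping of the type decomposition of $d\alpha$; once it is done correctly, the global potential drops out without any further analysis.
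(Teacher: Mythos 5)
Your proof is correct and is exactly the intended deduction: the paper states Corollary \ref{ricflatstein} without a separate written proof, treating it as an immediate application of Theorem \ref{ricflatmain1} once $b_2=0$ and $H^1(M,\mathcal{O})=0$ are used to verify the $dd^c$-exactness required in condition $B$ (the quadratic decay of $S$ being assumed outright). Your cohomological bookkeeping --- $d$-exactness of $\omega$ from $b_2=0$, then $\bar\partial$-exactness of the $(0,1)$-part of the primitive via the Dolbeault isomorphism, yielding $\omega = 2\sqrt{-1}\,\partial\overline\partial\,\mathrm{Im}(g)$ --- is the standard argument and is carried out correctly.
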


We also have results on local tangent cones:
\begin{cor}\label{ricflatlocal1}
Let $(M_i, p_i)$ be a sequence of complete K\"ahler manifolds with $Ric_{M_i} \geq 0$, $vol(B(p_i, 1))>v$. Assume there exists $C, v>0$ such that $|Rm|_{M_i}(x)d^2(x, p_i)\leq C$ for all $i$ and $d(x, p_i)<1$ . Let $(M_i, p_i)$ converge in the pointed Gromov-Hausdorff sense to $(X, o)$. If one tangent cone at $o\in X$ is Ricci flat, then all tangent cones at $o$ are Ricci flat, and the tangent cone is unique (certainly $X$ need not be Ricci flat).
\end{cor}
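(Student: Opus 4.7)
The plan is to reduce the local statement to the global rigidity of Theorem \ref{ricflatmain1} via a bubbling construction producing a complete smooth K\"ahler manifold $N$ satisfying condition A, with the given Ricci flat cone $V_0$ as a tangent cone at infinity.

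Let $V_0 = \lim_k (X, \lambda_k d_X, o)$ with $\lambda_k \to \infty$ realize the Ricci flat tangent cone at $o$. A standard diagonal argument using $(M_i, p_i) \to (X, o)$ gives indices $i_k \to \infty$ so that $(M_{i_k}, \lambda_k d_{M_{i_k}}, p_{i_k}) \to V_0$ in pointed Gromov--Hausdorff sense, with smooth convergence of the K\"ahler structure on the regular part of $V_0$. The scale invariance of $|Rm|(x) d^2(x, p_i)$ carries the curvature hypothesis to these rescalings on balls exhausting the rescaled manifolds, and Bishop--Gromov with $Vol(B(p_i,1))>v$ yields a Euclidean volume lower bound $\ge v/\omega_{2n}$ at the relevant scales.

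Set $K_k := |Rm|_{M_{i_k}}(p_{i_k})$ and define a milder rescaling factor $\sigma_k := \max(\sqrt{K_k}, k\lambda_k) \to \infty$, so that $\sigma_k/\lambda_k \to \infty$ and $\sigma_k^{-2}K_k \le 1$. Form the bubble $N_k := (M_{i_k}, \sigma_k d_{M_{i_k}}, p_{i_k})$; on the exhausting ball $B(p_{i_k}, \sigma_k)$ in the rescaled metric we have curvature $\le 1$ at the basepoint, $|Rm|(x)d^2(x, p_{i_k}) \le C$ elsewhere, and $Vol(B(p_{i_k}, R))/R^{2n} \ge v/\omega_{2n}$ for $R \le \sigma_k$. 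Cheeger--Gromov compactness (bounded curvature on compact subsets plus the uniform injectivity radius lower bound from Cheeger--Colding using the volume bound) extracts a smooth pointed K\"ahler limit $N$: a complete smooth K\"ahler manifold satisfying condition A. Since $\sigma_k/\lambda_k \to \infty$, along the diagonal $\mu_k := \lambda_k/\sigma_k \to 0$ the rescalings $(N, \mu_k d_N, p_N)$ approximate $(M_{i_k}, \mu_k \sigma_k d_{M_{i_k}}, p_{i_k}) = (M_{i_k}, \lambda_k d_{M_{i_k}}, p_{i_k})$, whose limit is $V_0$; hence $V_0$ is a tangent cone of $N$ at infinity. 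Theorem \ref{ricflatmain1} applied to $N$ forces $N$ to be Ricci flat, so its tangent cone at infinity is unique and equals $V_0$.

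For uniqueness of the tangent cone at $o$: any other tangent cone $V_1$ at $o$, realized by $\lambda_k' \to \infty$, can be absorbed by choosing $\sigma_k := \max(\sqrt{K_k}, k\max(\lambda_k, \lambda_k'))$ in the construction above. The resulting limit $\tilde N$ satisfies condition A and has both $V_0$ and $V_1$ as tangent cones at infinity (via the blow-down rates $\lambda_k/\sigma_k$ and $\lambda_k'/\sigma_k$, both $\to 0$). Since $V_0$ is Ricci flat, Theorem \ref{ricflatmain1} gives $\tilde N$ Ricci flat, forcing unique tangent cone at infinity, whence $V_0 = V_1$. Therefore all tangent cones at $o$ coincide with $V_0$ and are Ricci flat. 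The main obstacle is the bubble construction: carrying out Cheeger--Gromov compactness in the K\"ahler category when the basepoint curvature $K_k$ may blow up requires the careful intermediate scale $\sigma_k$, and one must verify that the parallel complex structure passes to the smooth limit and that the volume lower bound persists. The uniform injectivity radius bound from bounded curvature plus Euclidean volume lower bound is the crucial technical ingredient.
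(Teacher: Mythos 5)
There is a genuine gap, and it sits exactly where you flag "the main obstacle": the bubble $N$ cannot be shown to be a smooth complete K\"ahler manifold, so Theorem \ref{ricflatmain1} does not apply to it. The hypothesis $|Rm|_{M_i}(x)d^2(x,p_i)\leq C$ is vacuous at $x=p_i$ and gives no uniform curvature bound on any neighborhood of the basepoint: after rescaling by $\sigma_k$ you still only have $|Rm|_{N_k}(x)\leq C/d^2(x,p_{i_k})$, which blows up as $x\to p_{i_k}$. Normalizing so that $K_k/\sigma_k^2\leq 1$ controls the curvature at the single point $p_{i_k}$ but not on a ball around it (curvature can concentrate at points arbitrarily close to $p_{i_k}$ at arbitrarily small scales), so Cheeger--Gromov compactness with "bounded curvature on compact subsets" is not available, and the limit $N$ is again only a noncollapsed Ricci-limit space, smooth away from its basepoint with $|Rm|\leq C/r^2$ --- i.e.\ an object of exactly the same kind as $X$, just rescaled, and you have made no progress. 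The standard fix, a Perelman-type point-picking argument recentering at points of (suitably weighted) maximal curvature, does produce a smooth bubble with bounded curvature, but then the basepoint moves and the tangent cone at infinity of that deepest bubble is some cone attached to the bubble tree, with no reason to coincide with the prescribed Ricci flat cone $V_0$ at $o$; relating the two would require a full neck/bubble-tree analysis that is not supplied. The same objection applies to the variant $\tilde N$ meant to absorb two tangent cones $V_0$ and $V_1$ simultaneously.

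The paper avoids this reduction entirely and instead localizes the proof of Theorem \ref{ricflatmain1} on the singular limit $X$ itself: using $|Rm|\leq C/r^2$, the space $X$ is complex-analytically smooth away from $o$; almost-conicality of small balls gives $1$-convexity and hence a holomorphic embedding of $B(o,cr_0)$ into $\mathbb{C}^N$ after contracting finitely many compact subvarieties (and a rescaling argument rules out compact positive-dimensional subvarieties inside $B(o,cr_0)$), so $X$ is locally a normal complex space. One then runs the $\int Ric^k\wedge\omega^{n-k}$ monotonicity and the parallel pluri-canonical section argument on annuli around $o$ exactly as in the global case to propagate Ricci flatness from one local tangent cone to all of them, and uniqueness follows from the Donaldson--Sun argument. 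If you want to salvage your approach, you would need either to justify smoothness of the bubble (which the hypotheses do not give) or to observe that Theorem \ref{ricflatmain1}'s proof never truly uses smoothness at the basepoint --- but that observation is precisely the content of the paper's proof, not a consequence of the theorem's statement.
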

\begin{cor}\label{ricflatlocal2}
Let $(M_i, p_i)$ be a sequence of polarized K\"ahler manifolds with $Ric_{M_i} \geq -1$, $vol(B(p_i, 1))>v$. Assume there exists $C, v>0$ such that $S_{M_i}(x)d^2(x, p_i)\leq C$ for all $i$ and $d(x, p_i)<1$ . Assume $(M_i, p_i)$ converges in the pointed Gromov-Hausdorff sense to $(X, o)$. If one tangent cone at $o\in X$ is Ricci flat, then all tangent cones at $o$ are Ricci flat, and the tangent cone is unique.
\end{cor}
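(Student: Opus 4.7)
Plan. The idea is to bootstrap Corollary \ref{ricflatlocal2} from Theorem \ref{ricflatmain1} by passing to appropriate rescalings of the $M_i$'s, using polarization to produce the local $dd^c$-exact structure that underpins condition $B$. Let $V$ be the given Ricci flat tangent cone at $o$, realized as $V=\lim_j(X,r_j^{-1}d_X,o)$, and let $V'$ be any other tangent cone at $o$, realized as $V'=\lim_j(X,s_j^{-1}d_X,o)$. Combining this with the pointed Gromov-Hausdorff convergence $(M_i,p_i)\to(X,o)$ and a standard diagonal extraction, one obtains indices $i(j)\to\infty$ and scales $\tilde r_j,\tilde s_j\to 0$ such that $(M_{i(j)},p_{i(j)},\tilde r_j^{-2}g_{i(j)})\to V$ and $(M_{i(j)},p_{i(j)},\tilde s_j^{-2}g_{i(j)})\to V'$ in the pointed GH sense, with the complex structures converging on the regular parts.

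Along the rescaled sequence the hypotheses are essentially scale-invariant: $Ric\geq -\tilde r_j^2\to 0$, the inequality $S\cdot r^2\leq C$ is preserved on balls whose rescaled radius tends to $\infty$, and $vol(B(p_{i(j)},1))>v$ carries through to a uniform noncollapsing bound at unit scale. Because $M_i$ is polarized by an ample line bundle $L$ with $c_1(L)=[\omega_i]$, on any trivialization of $L$ one has $\omega_i=-dd^c\log\|s\|_h^2$ for a local nonvanishing section $s$, so $\omega_i$ is locally $dd^c$-exact, a property preserved under rescaling with $L$ and $h$ scaled accordingly. Thus the rescaled family effectively satisfies the analogue of condition $B$ on a ball of radius $\tilde r_j^{-1}\to\infty$.

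I would then adapt the proof of Theorem \ref{ricflatmain1} under condition $B$ to this rescaled family. The Ricci flat tangent cone $V$ supplies a parallel pluri-canonical form on its metric regular part; by approximation along $(M_{i(j)},\tilde r_j^{-2}g_{i(j)})\to V$ and Cheeger-Colding type convergence of the local $dd^c$-potentials (available through polarization), this parallel form is matched by almost-parallel pluri-canonical objects on increasingly large balls in the rescaled $M_{i(j)}$. Transporting these across the scale range from $\tilde r_j$ to $\tilde s_j$ and passing to the limit at the $\tilde s_j$ scale produces a parallel pluri-canonical form on the regular part of $V'$, so $V'$ is Ricci flat. Uniqueness then follows from the connectedness of the space of tangent cones at $o$ (cf.\ \cite{[CN]}) together with the rigidity of metric cones of fixed volume ratio admitting such a parallel pluri-canonical form.

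The principal obstacle is that Theorem \ref{ricflatmain1} under condition $B$ is established on a single global K\"ahler manifold with $dd^c$-exact K\"ahler form, whereas here one has a \emph{sequence} of manifolds possessing only local $dd^c$-exactness via polarization. The main technical effort therefore goes into quantifying the condition-$B$ argument so that its estimates depend only on local K\"ahler and Ricci potentials with uniform control along the rescaled family, and into matching local trivializations of $L$ under the pointed GH convergence so that the almost-parallel pluri-canonical objects descend to a genuinely parallel form in both limits $V$ and $V'$. Once these uniform, localized versions of the condition-$B$ analysis are in hand, the rigidity engine of Theorem \ref{ricflatmain1} transports to the local setting at $o\in X$.
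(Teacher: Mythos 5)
Your overall strategy is the paper's: realize both tangent cones at $o$ as pointed Gromov--Hausdorff limits of rescaled $M_{i(j)}$ by a diagonal argument, observe that the hypotheses become scale-invariant (in particular $Ric\geq -\tilde r_j^2\to 0$), and run the condition-$B$ machinery of Theorem \ref{ricflatmain1} on this rescaled family, with the polarization supplying the K\"ahler potentials $u_i$ with $dd^c u_i=\omega_i$ on definite-sized balls. The paper's own proof is exactly this reduction, and its one explicit addendum is the point you only gesture at: in the H\"ormander step one twists the metric on $K^{-m}$ by $e^{-Cu}$ so that the curvature $mRic+C\omega$ is strictly positive despite $Ric\geq -1$; your remark that the rescaled Ricci lower bound tends to zero, together with the potential from the polarization, covers this in substance.

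There is, however, one genuine gap in the mechanism you describe for passing from $V$ to $V'$. You assert that the almost-parallel pluri-canonical objects lifted from the Ricci flat cone $V$ can be ``transported across the scale range'' and that the limit at the $\tilde s_j$ scale is a \emph{parallel} form on $V'$. That is not how the argument works and it would fail as stated: the lifted sections $s_i$ are corrected by an $L^2$ $\overline\partial$-argument to be genuinely holomorphic and nowhere vanishing, so at any other scale their limit $s_{V'}$ is a holomorphic pluri-anticanonical section, but nothing in the transport forces its norm to be constant -- the almost-parallel control degrades as soon as one leaves the scales at which $V$ approximates the manifold. The actual engine is the scale-invariant quantity $\int Ric^n$: since $Ric=dd^c\log|s_i|^{-2/m}$ with $s_i$ nowhere vanishing, the Ricci flat cone forces the relevant Monge--Amp\`ere mass to vanish, whence $\int\chi\,(dd^c\log|s_{V'}|^{-2/m})^n=0$ on the other cone; one then homogenizes the dual section $\Omega$, uses the integration-by-parts identity of Lemma \ref{sameintegral} to compare with $(dd^c\log r^2)^n$, and concludes via the maximum principle that $|\Omega|$ is constant, i.e.\ $V'$ is Ricci flat. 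This openness step, combined with closedness and connectedness of the set of tangent cones, is what yields that all tangent cones are Ricci flat; uniqueness then comes from the Donaldson--Sun rigidity of the holomorphic spectrum, not merely from connectedness plus an unspecified cone rigidity. Without replacing your ``transport'' step by this $\int Ric^n$ argument, the proposal does not close.
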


The uniqueness of tangent cones is an important problem in geometric analysis. A well-known conjecture states that tangent cones are unique for noncollapsed Einstein manifolds (see \cite{[CT]}\cite{[CM]}\cite{[DS]} for important progress). However, without assuming Einstein equation, the uniqueness is not ensured in general \cite{[Perelman]}\cite{[CC1]}\cite{[CN]}.
In our settings, tangent cones are unique, as long as one tangent cone is Ricci flat. As we mentioned before, by Colding-Naber's contruction \cite{[CN]}, such thing cannot be true in the Riemannian setting.
\begin{cor}\label{ricflatcorintegral}
Let $M^n$ satisfy condition $A$. If $\lim\limits_{r\to\infty}\inf r^2\dashint_{B(p, r)}S = 0$, then $M$ is Ricci flat.

\end{cor}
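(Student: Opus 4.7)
The plan is to reduce to Theorem \ref{ricflatmain1}: since $M$ already satisfies condition $A$, it suffices to exhibit a single tangent cone at infinity that is Ricci flat in the sense of the Definition preceding Theorem \ref{ricflatmain1}. The hypothesis $\liminf_{r\to\infty} r^2 \dashint_{B(p,r)} S = 0$ is naturally a statement about a particular blow-down sequence, so the construction of a good tangent cone is the right target.

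First I would pick $r_i \to \infty$ realizing the liminf, so $r_i^2\dashint_{B(p,r_i)} S \to 0$, and consider the rescaled pointed manifolds $(M, r_i^{-2} g, p)$. Nonnegative Ricci curvature and Euclidean volume growth give Gromov compactness; by Cheeger--Colding a subsequence converges in pointed Gromov--Hausdorff distance to a metric cone $(V,o)$. Condition $A$ yields $|Rm|_{r_i^{-2}g}(x)\leq C/d_{r_i^{-2}g}(x,p)^2$, so on any compact subset of $V\setminus\{o\}$ the curvature is uniformly bounded. Standard Cheeger--Gromov compactness (together with convergence of the complex structures on the regular part, as in the convention fixed just before the Ricci flat cone definition) upgrades the convergence to smooth convergence of Kähler metrics on $V_{\mathrm{reg}}$, which is therefore a smooth Kähler cone with $Ric\geq 0$.

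Now I would push the integral hypothesis to the limit. Under the rescaling, $S_{r_i^{-2}g}=r_i^2 S$ and $B_{r_i^{-2}g}(p,1)=B(p,r_i)$, so
\[
\dashint_{B_{r_i^{-2}g}(p,1)} S_{r_i^{-2}g} \;=\; r_i^2\dashint_{B(p,r_i)} S \;\longrightarrow\; 0.
\]
Since $Ric\geq 0$ forces $S\geq 0$, smooth convergence on $V_{\mathrm{reg}}\cap B(o,1)$ gives $S\equiv 0$ there, and the cone structure propagates this to all of $V_{\mathrm{reg}}$. Combined with $Ric\geq 0$, this yields $Ric\equiv 0$ on $V_{\mathrm{reg}}$. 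The singular set $V\setminus V_{\mathrm{reg}}$ has real codimension $\geq 4$ by the Cheeger--Colding--Tian $\epsilon$-regularity for Kähler limits with smooth curvature bound on approximators, which applies here by condition $A$. Finally, on the regular part of a Ricci flat Kähler cone, the Chern--Ricci form of the canonical bundle vanishes, so the induced Hermitian metric on $K_{V_{\mathrm{reg}}}$ is flat; after passing to an appropriate tensor power to kill monodromy one obtains a nontrivial parallel pluri-canonical form, verifying that $V$ is Ricci flat in the required sense. Applying Theorem \ref{ricflatmain1} then finishes the proof.

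The main obstacle is the last step of the previous paragraph: verifying that the limit cone really satisfies the full Ricci flat definition, i.e.\ both the codimension $4$ regularity and the existence of a global parallel pluri-canonical form, rather than just pointwise vanishing of Ricci on smooth points. Once these regularity/topological ingredients are in hand (they are essentially of Cheeger--Colding--Tian type and should be available under condition $A$), everything else is the straightforward rescaling and smooth-convergence argument outlined above.
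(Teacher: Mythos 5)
Your proposal is correct and follows essentially the same route as the paper: extract the tangent cone along a sequence realizing the liminf, pass the (nonnegative) scalar curvature integral to the limit to conclude the cone is Ricci flat, verify the parallel pluri-canonical form via finiteness of the monodromy, and invoke Theorem \ref{ricflatmain1}. The only cosmetic difference is that the paper makes the limiting step precise through the current convergence of $Ric\wedge\omega^{n-1}$ in Lemma \ref{highpowerbd1} (with $k=1$) plus Monge--Amp\`ere regularity, whereas you assert smooth convergence on the regular part, which is slightly more than the bare bound $|Rm|\leq C/r^2$ gives a priori but does not affect the argument.
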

\begin{conj}\label{conj1}
Let $M$ be a complete noncompact K\"ahler manifold with nonnegative Ricci curvature and Euclidean volume growth. Assume one tangent cone at infinity is Ricci flat, then $M$ is Ricci flat. Hence, the tangent cone should be unique. We can also ask about the similar question on local tangent cones.
\end{conj}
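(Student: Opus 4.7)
My plan is to reduce Conjecture \ref{conj1} to Theorem \ref{ricflatmain1} by showing that the presence of a single Ricci flat tangent cone at infinity forces, a posteriori, one of conditions $A$ or $B$. The more tractable route appears to be condition $B$: construct a global pluri-canonical section $\Omega$ on $M$, consider the induced Ricci potential $\varphi$ defined by writing $\omega^n$ as a multiple of the volume density associated to $\Omega$ (so that $\sqrt{-1}\partial\bar\partial\varphi = \mathrm{Ric}(\omega)\geq 0$), and apply a Liouville-type theorem for slowly growing plurisubharmonic functions on complete K\"ahler manifolds with $\mathrm{Ric}\geq 0$ to conclude $\varphi$ is constant.

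The key steps, in order, would be: \emph{(i)} extract from the parallel pluri-canonical form on the regular part of the Ricci flat tangent cone, via Cheeger-Colding $\epsilon$-regularity together with the complex structure convergence built into Definition 2, a sequence of almost-parallel pluri-canonical sections on annuli $A(r_i, K_i r_i)\subset M$ along the chosen sequence $r_i\to\infty$; \emph{(ii)} globalize these local sections to a single holomorphic object on $M$ via weighted $L^2$ estimates for $\bar\partial$, with weights dictated by Euclidean volume growth and the almost-Ricci-flat asymptotics, in the spirit of the Stein case already settled in Corollary \ref{ricflatstein}; \emph{(iii)} use the Ricci flat tangent cone to control the growth of $\varphi$ on the sequence (sublogarithmic after averaging, by almost volume rigidity), and then invoke a Liouville-type argument to force $\varphi$ constant everywhere, giving $\mathrm{Ric}(\omega)=0$. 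Once Ricci flatness is established, uniqueness of the tangent cone follows exactly as in Theorem \ref{ricflatmain1}.

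The main obstacle, and the reason the statement is conjectural, is step \emph{(ii)}: globalizing the cone's holomorphic data without any pointwise geometric control on $M$. When $H^1(M,\mathcal{O})=0$ and $b_2=0$ the relevant $\bar\partial$ problem is essentially unobstructed, which is why Corollary \ref{ricflatstein} is within reach; in general one must either match pluri-canonical sections on $M$ of controlled growth with those on the cone through a dimension-counting argument analogous to Theorem \ref{thmsharpestimates}, or discover a new monotonicity identity intrinsic to K\"ahler geometry that bridges the single sequence of good scales to all scales. The Colding-Naber flexibility cited in the discussion of Theorem \ref{ricflatmain1} shows that purely metric arguments cannot close this gap, so any successful proof must exploit the K\"ahler structure globally in a genuinely new way.
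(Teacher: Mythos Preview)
The statement is labeled \emph{Conjecture} in the paper and no proof is given; it is posed as an open problem immediately after Theorem~\ref{ricflatmain1}. So there is no paper proof to compare against, and your text is, appropriately, a research outline rather than a proof.

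Two remarks on the internal logic. First, your framing does not match your steps: you announce a reduction to Theorem~\ref{ricflatmain1} via condition~$B$, but steps (i)--(iii) never verify condition~$B$ (quadratic scalar decay and $dd^c$-exactness of $\omega$); instead step~(iii) already concludes $\mathrm{Ric}(\omega)=0$ directly. If (iii) succeeds you do not need Theorem~\ref{ricflatmain1} at all. Second, you locate the entire difficulty in~(ii), but~(iii) is at least as serious even granting~(ii). A global nowhere-vanishing $\Omega\in H^0(M,K^m)$ gives a psh function $\varphi=\tfrac{1}{m}\log|\Omega|^2$ with $dd^c\varphi=\mathrm{Ric}$; on each good annulus $A(r_i,K_ir_i)$ the section is close to a parallel section on the cone, hence $\varphi$ is close to a constant $c_i$ there---but nothing prevents $c_i$ from drifting with $i$. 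The maximum principle gives monotonicity of $\sup_{\partial B(p,r)}\varphi$, so a one-sided bound propagates, but a two-sided bound (needed for any Liouville argument) does not follow from control along a single sequence of scales. This is precisely the ``one scale to all scales'' obstruction, and it resurfaces in~(iii) rather than being confined to~(ii).

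For contrast, the paper's proof of the weaker Theorem~\ref{ricflatmain1} handles this propagation problem by a completely different mechanism: the scale-invariant quantity $\int \mathrm{Ric}^n$ is shown (Lemmas~\ref{highpowerbd},~\ref{highpowerbd1}) to be bounded and to pass to Gromov--Hausdorff limits under conditions~$A$/$B$, which lets one run an open--closed argument on the space of tangent cones to conclude that \emph{all} tangent cones are Ricci flat; only then is $M$ itself attacked, via algebraic compactification and testing $\int_L\mathrm{Ric}$ on an algebraic curve. Conditions~$A$/$B$ are used exactly to make those integrals and limits well-behaved. Your proposed route bypasses this mechanism entirely, which is legitimate as a strategy, but it means the burden of linking scales falls squarely on the Liouville step, and no existing Liouville theorem delivers constancy from control on a single sequence of radii.
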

One can compare Conjecture \ref{conj1} with a theorem of Colding \cite{[Coldingvolume]}, which says that if one tangent cone is Euclidean, then the manifold is Euclidean.

\begin{conj}\label{conj2}
Let $(M^n, p)$ be a complete noncompact K\"ahler manifold with nonnegative Ricci curvature. If $\lim\limits_{r\to\infty}\inf r^2\dashint_{B(p, r)}S = 0$, then $M$ is Ricci flat. We can also ask whether or not $\lim\limits_{r\to\infty} r^2\dashint_{B(p, r)}S$ exists (possibly infinity).
\end{conj}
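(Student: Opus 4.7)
The plan is to proceed by contradiction, combining the tangent cone technique developed for Theorem~\ref{ricflatmain1} with a heat-equation analysis in the spirit of Ni's proof of Theorem~\ref{optgap}. Suppose $M$ is not Ricci flat; the goal is to establish $\liminf_{r\to\infty} r^2 \dashint_{B(p,r)} S > 0$.

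\emph{Step 1: reduction to Euclidean volume growth.} First I would try to show that the hypothesis forces Euclidean volume growth. A natural route is to prove that if the asymptotic volume ratio is strictly less than the Euclidean one, then the Bishop-Gromov defect provides a definite positive lower bound for $r^2 \dashint_{B(p,r)} S$, contradicting the hypothesis. This would use a computation along radial geodesics relating the volume deficit to the integral of Ricci curvature, adapted to the K\"ahler setting (in spirit analogous to Corollary~\ref{niquantitative}, but replacing the $BK\ge 0$ monotonicity by an averaged estimate requiring only $Ric\ge 0$).

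\emph{Step 2: tangent cone analysis.} Choose a sequence $r_i \to \infty$ with $r_i^2 \dashint_{B(p, r_i)} S \to 0$, rescale, and extract a tangent cone $(V, o)$. By Cheeger-Colding, $V$ is a metric cone; since $M$ is K\"ahler, the regular part of $V$ is K\"ahler. The vanishing of the rescaled integrated scalar curvature, combined with the nonnegativity of the Ricci form passed to the limit, should force $V$ to be Ricci flat in the sense of the Definition preceding Theorem~\ref{ricflatmain1}. One would then invoke Theorem~\ref{ricflatmain1} to deduce $M$ is Ricci flat --- except that Theorem~\ref{ricflatmain1} requires condition $A$ or $B$. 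To close the loop, one must either deduce condition $A$ or $B$ from the hypothesis together with the existence of a Ricci flat tangent cone, or prove a version of Theorem~\ref{ricflatmain1} without a curvature-decay assumption.

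\emph{Main obstacle.} The principal obstruction is precisely the passage from integrated control of the scalar curvature to the pointwise quadratic decay needed in condition $A$. Without a direct elliptic/parabolic equation for $S$ in this category, converting integral decay into pointwise decay appears to require genuinely new analytic input, perhaps via a delicate $\varepsilon$-regularity statement for K\"ahler manifolds with $Ric\ge 0$ at scale $r$. For the second half of the conjecture --- existence of $\lim r^2 \dashint_{B(p,r)} S$ --- the natural strategy would adapt the argument for Theorem~\ref{thmniquestion}, but that argument relies on monotonicity formulas coming from the bisectional curvature assumption, which are unavailable under $Ric \ge 0$ alone. The real task, in my view, is to identify a new monotone quantity compatible with the K\"ahler structure that depends only on Ricci nonnegativity; once such a quantity is in hand, both halves of the conjecture should follow in tandem with Steps~1--2.
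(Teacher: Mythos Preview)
The statement you are addressing is Conjecture~\ref{conj2}, which the paper explicitly leaves open; there is no proof in the paper to compare against. Your proposal is, appropriately, not a proof but a sketch of an approach together with an honest accounting of its obstructions. The ``Main obstacle'' you name---passing from integral control of $S$ to the pointwise quadratic decay in condition $A$ (or to the $dd^c$-exactness in condition $B$) so that Theorem~\ref{ricflatmain1} applies---is exactly the gap that separates Corollary~\ref{ricflatcorintegral} from the full conjecture, and the paper offers no mechanism for bridging it.

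Your Step~1 deserves a sharper warning than you give it. The claim that a sub-Euclidean asymptotic volume ratio forces a definite positive lower bound on $r^2\dashint_{B(p,r)}S$ is precisely the content of Corollary~\ref{niquantitative}, and that corollary genuinely relies on $BK\geq 0$: the proof passes through Theorem~\ref{thmsharpestimates}, which in turn uses the three circles lemma and the structure of tangent cones under nonnegative bisectional curvature. There is no known substitute under $Ric\geq 0$ alone. Moreover, Ricci-flat K\"ahler manifolds without Euclidean volume growth exist (e.g., ALF/ALG hyper-K\"ahler $4$-manifolds), so any reduction to Euclidean volume growth must already consume the hypothesis ``$M$ is not Ricci flat'' in a nontrivial way; a naive Bishop--Gromov defect argument cannot see this. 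In short, Step~1 is itself an open problem of comparable difficulty, not a routine reduction.
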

\begin{cor}\label{ricflatcorpointwise}
Let $M^n$ satisfy condition $A$. If the scalar curvature has faster than quadratic decay, then $M$ is Ricci flat.

\end{cor}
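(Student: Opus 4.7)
The plan is to reduce the pointwise assumption to the integral hypothesis of Corollary \ref{ricflatcorintegral}. Faster than quadratic decay means that for every $\delta>0$ there exists $R_0=R_0(\delta)$ with $S(x)\leq \delta/d(x,p)^2$ whenever $d(x,p)\geq R_0$, while on $B(p,R_0)$ the scalar curvature is uniformly bounded by some constant $C_0=C_0(\delta)$ (using condition $A$ on a compact set). I then split
\[
\int_{B(p,r)}S \;=\; \int_{B(p,R_0)}S \;+\; \int_{B(p,r)\setminus B(p,R_0)}S
\;\leq\; C_0\, |B(p,R_0)| \;+\; \delta\int_{B(p,r)\setminus B(p,R_0)}\frac{1}{d(\cdot,p)^2}.
\]

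For the annular integral, I would use the coarea formula together with Bishop--Gromov volume comparison to bound $\mathrm{Area}(\partial B(p,s))\leq c_n\, s^{2n-1}$, which gives
\[
\int_{B(p,r)\setminus B(p,R_0)}\frac{1}{d(\cdot,p)^2}\;\leq\; \int_{R_0}^{r} c_n\, s^{2n-3}\,ds \;\leq\; \frac{c_n}{2n-2}\,r^{2n-2},
\]
valid since $n\geq 2$. Combining these, and dividing by the Euclidean-type volume $|B(p,r)|\geq c\,v\,r^{2n}$ that follows from the asymptotic volume assumption, I get
\[
r^2\dashint_{B(p,r)}S \;\leq\; \frac{C_0\,|B(p,R_0)|\,r^2}{c\,v\,r^{2n}} \;+\; \frac{c_n\,\delta}{(2n-2)\,c\,v}.
\]
Letting $r\to\infty$ the first term vanishes, so $\limsup_{r\to\infty} r^2\dashint_{B(p,r)} S\leq C(n,v)\,\delta$, and since $\delta>0$ is arbitrary this forces the full limit to be $0$. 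In particular $\liminf_{r\to\infty} r^2\dashint_{B(p,r)}S=0$.

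Since $M$ satisfies condition $A$, Corollary \ref{ricflatcorintegral} applies directly and concludes that $M$ is Ricci flat. There is no serious obstacle in this argument; the only non-cosmetic point is the verification that Bishop--Gromov gives a polynomial-in-$r$ bound with exponent strictly less than $2n$ for the integral of $d(\cdot,p)^{-2}$, which is precisely where $n\geq 2$ (built into condition $A$) is used.
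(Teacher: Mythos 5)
Your proposal is correct: every step (the $\delta/d^2$ bound outside $B(p,R_0)$, the coarea/Bishop area bound $\mathrm{Area}(\partial B(p,s))\leq c_n s^{2n-1}$, the lower volume bound $|B(p,r)|\geq v\,\omega_{2n}r^{2n}$ from monotonicity of the volume ratio, and the use of $n\geq 2$ to make the annular term of order $r^{2n-2}$) checks out, and Corollary \ref{ricflatcorintegral} is indeed available at that point in the paper. The route is genuinely different from the paper's, though. The paper's proof is a one-line observation: under the rescalings $(M_i,p_i)=(M,p,d/r_i)$ the scalar curvature is rescale-covariant, so faster-than-quadratic decay makes $S_i\to 0$ locally uniformly away from the vertex; hence the tangent cone is Ricci flat (using $Ric\geq 0$ so that vanishing scalar curvature in the limit forces vanishing Ricci in the current sense), and Theorem \ref{ricflatmain1} applies directly. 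Your detour through the averaged integral $r^2\dashint_{B(p,r)}S$ buys nothing extra here but costs an explicit Bishop--Gromov computation; on the other hand it is completely elementary and makes the logical dependence on the already-proved Corollary \ref{ricflatcorintegral} transparent, and it actually yields the slightly stronger conclusion that the full limit $\lim_{r\to\infty}r^2\dashint_{B(p,r)}S$ equals $0$, not just the liminf. Both arguments terminate in the same place, namely a Ricci flat tangent cone fed into Theorem \ref{ricflatmain1}.
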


This can be regarded as a gap theorem for Ricci flat metrics. In fact, one only needs to assume that scalar curvature has faster than quadratic decay along a sequence of annuli with fixed ratio.

\begin{cor}\label{ricflatcorintegralsn}
Let $M^n$ satisfy condition $A$. If $\int_{B(p, r)}S^n =o(\log r)$, then $M$ is Ricci flat.

\end{cor}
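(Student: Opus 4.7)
The plan is to exhibit one tangent cone of $M$ at infinity that is Ricci flat (in the paper's sense) and then invoke Theorem \ref{ricflatmain1}.

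First I would extract a sequence of scales along which the scalar curvature integral becomes small on a single dyadic annulus. Set $F(r) := \int_{B(p,r)} S^n$, nondecreasing with $F(r) = o(\log r)$. Telescoping gives
\[
\sum_{k=1}^{N} \bigl(F(2^{k+1}) - F(2^{k})\bigr) \;=\; F(2^{N+1}) - F(2) \;=\; o(N),
\]
so the Ces\`aro average of $\int_{A(2^{k}, 2^{k+1})} S^n = F(2^{k+1}) - F(2^{k})$ tends to zero, and some $r_j = 2^{k_j} \to \infty$ satisfies $\int_{A(r_j, 2r_j)} S^n \to 0$.

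Next I would rescale $g_j := r_j^{-2} g$. Condition $A$ makes $|\mathrm{Rm}|_{g_j}$ uniformly bounded on any compact subset of $M \setminus \{p\}$ in $g_j$, so by Gromov compactness, Cheeger--Colding theory, and Anderson-type regularity a subsequence of $(M, g_j, p)$ converges in the pointed Gromov--Hausdorff sense to a metric cone $(V, o)$, smoothly in both metric and complex structure on the regular part $\mathcal{R}(V)$. The change of variables
\[
\int_{A^{g_j}(1,2)} S_{g_j}^{n}\, dV_{g_j} \;=\; \int_{A^{g}(r_j, 2r_j)} S_g^{n}\, dV_g \;\to\; 0,
\]
combined with smooth convergence of $S_{g_j} \ge 0$ on the regular part, forces $S_V \equiv 0$ on $\mathcal{R}(V) \cap A^V(1,2)$; cone-scaling invariance then extends this to $S_V \equiv 0$ on all of $\mathcal{R}(V)$.

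On $\mathcal{R}(V)$ the identity $\mathrm{tr}_\omega \mathrm{Ric} = S$ together with $\mathrm{Ric}_V \ge 0$ (a limit of nonnegative Hermitian forms) and $S_V \equiv 0$ forces $\mathrm{Ric}_V \equiv 0$. A Ricci flat K\"ahler cone carries a nontrivial parallel pluri-canonical form on its regular part: flatness of $K_V$ together with the radial cone structure yields a parallel section of some power of $K_V$. Hence $V$ is Ricci flat in the paper's sense, and Theorem \ref{ricflatmain1} gives that $M$ is Ricci flat. The main subtlety I expect is controlling the convergence near the metric singular set of $V$ --- one exhausts $\mathcal{R}(V) \cap A^V(1,2)$ by compact sets on which convergence is smooth --- and justifying the nontrivial parallel pluri-canonical form on this possibly singular cone; both steps fit within the framework already developed in the paper for Theorem \ref{ricflatmain1}.
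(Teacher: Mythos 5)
Your proposal is correct and follows essentially the same route as the paper: a dyadic/telescoping argument extracts annuli with $\int_{A(r_j,2r_j)}S^n\to 0$, scale invariance of $\int S^n$ passes this to the rescaled sequence, the limit tangent cone is shown to be Ricci flat, and Theorem \ref{ricflatmain1} finishes. The only cosmetic difference is that the paper deduces $\dashint S_i\to 0$ via H\"older and concludes $\mathrm{Ric}_V=0$ in the current sense, whereas you argue pointwise on the regular part (where "smooth convergence of $S_{g_j}$" should more carefully be the $C^{1,\alpha}$/current convergence the paper uses); this does not affect the validity of the argument.
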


Both the power and $o(\log r)$ are sharp.

\begin{cor}\label{ricflatcorrmn}
Let $M^n$ satisfy condition $A$. If $\int_{B(p, r)}|Rm|^n =o(\log r)$, then $M$ is Ricci flat.

\end{cor}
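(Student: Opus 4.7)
\medskip

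The plan is to reduce Corollary \ref{ricflatcorrmn} immediately to the previously established Corollary \ref{ricflatcorintegralsn} by means of a pointwise comparison between scalar curvature and the full curvature tensor. The hypotheses of the two statements are identical except that $\int_{B(p,r)} |Rm|^n = o(\log r)$ replaces $\int_{B(p,r)} S^n = o(\log r)$, so it suffices to show that the integral hypothesis on $|Rm|$ implies the corresponding integral hypothesis on $S$.

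The scalar curvature is a double trace of the Riemann curvature tensor, so there is a purely dimensional constant $C_n$ (depending only on the real dimension $2n$) with
$$|S(x)| \leq C_n \, |Rm(x)|$$
at every point of $M$. Since $M$ has nonnegative Ricci curvature by condition $A$, we actually have $0 \leq S(x) \leq C_n |Rm(x)|$, and therefore $S(x)^n \leq C_n^n |Rm(x)|^n$ pointwise. Integrating over $B(p,r)$ yields
$$\int_{B(p,r)} S^n \leq C_n^n \int_{B(p,r)} |Rm|^n = o(\log r).$$
Condition $A$ is preserved, so Corollary \ref{ricflatcorintegralsn} applies and concludes that $M$ is Ricci flat.

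There is no real obstacle here: the content of the corollary lies entirely in Corollary \ref{ricflatcorintegralsn} (and ultimately in Theorem \ref{ricflatmain1}), and the only step is the elementary bound $|S|\leq C_n|Rm|$. One could phrase the corollary equivalently with $|Ric|^n$ in place of $|Rm|^n$, via the same trivial argument, but $|Rm|^n$ is the scale-invariant integrand of the full curvature tensor on a $2n$-real-dimensional manifold, which is why this formulation is most natural.
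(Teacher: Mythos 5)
Your reduction is exactly the paper's proof: the paper simply states that Corollary \ref{ricflatcorrmn} follows from Corollary \ref{ricflatcorintegralsn}, and the pointwise bound $0\leq S\leq C_n|Rm|$ you supply is the (elementary) missing link. Your write-up is correct and just makes that one-line deduction explicit.
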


In this case, the tangent cone is an orbifold with flat metric. By \cite{[SunZhang]}, the metric converges in polynomial rate to the tangent cone. Thus, curvature decays at a polynomial rate greater than $2$. Thus $\int_M|Rm|^n <+\infty$.
So Corollary \ref{ricflatcorrmn} can also be regarded as a gap result. These manifolds have been studied in \cite{[BKN]}\cite{[T1]}\cite{[Joyce]}\cite{[HRI]}.

Now we describe the strategies in the proof. For $BK\geq 0$, recall in \cite{[L5]}, we used Gromov compactness to derive the existence of polynomial growth holomorphic functions. However, the method seems too soft to achieve sharp estimates. The new point is to use Cheeger-Colding-Tian \cite{[Coldingvolume]}\cite{[CC1]}\cite{[CCT]}, \cite{[LT]}\cite{[Lo]} that the tangent cone is biholomorphic to $\mathbb{C}^n$, as well as the three circles lemma \cite{[L1]}. The key thing is to express the volume ratio and scalar curvature integral in terms of degrees of polynomial growth holomorphic functions. Thus the degrees are deeper underlying invariants of the manifold. In the course of the proof, we have to overcome the difficulty on non-smoothness of the metric on tangent cones. When these connections are built, the proof for $BK\geq 0$ follows by arithmetic-geometric inequality. For $Ric\geq 0$, a difficulty is that usual monotonicity formulae (e.g., Bishop-Gromov volume comparison) are not sharp for Ricci flat (nonflat) metrics. We use a new monotonicity, $\int Ric^n$. The important thing is that such quantity is rescale invariant. Also, it behaves well under Gromov-Hausdorff convergence (under certain assumptions) and provides connections between different tangent cones. With such aid, we are able to show that if one tangent cone is Ricci flat, then all tangent cones are Ricci flat. Then holomorphic spectra for Ricci flat tangent cones are algebraic numbers \cite{[MSY]}\cite{[DS]}, hence rigid. We apply algebraic compactification in \cite{[DS]}\cite{[L2]}. 
Eventually, we just need to test the integral of Ricci form (use scale invariance of Ric) along an algebraic curve. If $M$ is not Ricci flat, integration by parts gives a contradiction.

\medskip

Notation: Unless otherwise stated, $\Psi(x_1, .., x_n)$ means quantities converging to zero, when $x_1, .., x_n$ are converging to zero. We also let $dd^c = \sqrt{-1}\partial\overline\partial$.

\medskip

\begin{center}
\bf  {\quad Acknowledgments}
\end{center}
The author thanks Professors A. Naber, L. Ni, G. Tian for interests and helpful discussions. He also thanks Professors J. Lott and J.-P Wang for answering several questions.

\section {The case of nonnegative bisectional curvature}

Proof of Theorem \ref{thmniquestion}:

Note $n=1$ case follows from the usual Cohn-Vossen inequality \cite{[CV]}. Below we let $n\geq 2$. We claim that if the limit is independent of $r$, then it is independent of the base point.
Assume $\lim\limits_{r\to\infty} r^2\dashint_{B(p_1, r)}S = a$ (possibly infinity). 
For any $p_2\in M$, let $b=d(p_1, p_2)$. Then 

 $$\int_{B(p_1, r-b)}S \leq \int_{B(p_2, r)}S \leq \int_{B(p_1, r+b)}S.$$
Also by Bishop-Gromov volume comparsion, 
$$1<\frac{vol(B(p_1, r+b))}{vol(B(p_1, r-b))}\leq (\frac{r+b}{r-b})^{2n}.$$
Putting these two inequalities together, we obtain that $$\lim\limits_{r\to\infty} r^2\dashint_{B(p_2, r)}S = a.$$
Below we fix the base point $p$.
There are two cases:
\begin{itemize}
\item $\lim\limits_{r\to\infty}\sup r^2\dashint_{B(p, r)}S = \infty$;
\item $\lim\limits_{r\to\infty}\sup r^2\dashint_{B(p, r)}S < \infty$.
\end{itemize}
In the first case, we can apply ~\cite[Theorem $4.1$]{[Ni]} (take $\rho = Ric$) to conclude that 
$$\lim\limits_{r\to\infty} r^2\dashint_{B(p, r)}S = \infty.$$  Thus we are left with the second case.
Let us first assume that the universal cover does not split off a flat $\mathbb{C}$ factor. It is known from ~\cite[Theorem $2$]{[L3]} that $M$ has Euclidean volume growth.

Consider a sequence $r_i\to\infty$, assume $(M_i, p_i, d_i) = (M, p, \frac{d}{r_i})$ converges in the pointed-Gromov-Hausdorff sense to a tangent cone $(V, o)$. As is known by \cite{[LT]} ~\cite[Proposition $6.1$]{[Lo]}, $V$ is smooth in the complex analytic sense and $V$ is biholomorphic to $\mathbb{C}^n$. Moreover, there exists a holomorphic coordinate $(z_1, .., z_n)$ on $V$ and a reeb vector field given by $J(r\frac{\partial}{\partial r}) = \sum Re(\sqrt{-1}d_jz_j\frac{\partial}{\partial z_j})$, where $d_j\geq 1$ corresponds to weighted degree of the homogeneous function $z_j$.

Under the pointed Gromov-Hausdorff convergence $(M_i, p_i)\to (V, o)$, we can find holomorphic functions on $M_i$ converging uniformly on each compact set to $z_j$ on $V$. Such functions serve as local holomorphic coordinates around $p_i$. For simplicity, we still call these functions $z_1, .., z_n$, defined on $M_i$. In other words, over $M_i$ and $V$, we have the same holomorphic coordinate (different metrics). Let the rescaled K\"ahler form on $M_i$ be $\omega_i$. As $Ric\geq 0$, $\log |dz_1\wedge dz_2\wedge\cdot\cdot\cdot\wedge dz_n|_{\omega_i}^2$ is psh. Hence by passing to a subsequence, we may assume it converges to a psh function on $V$ (we use the same holomorphic chart). Let us call it $\log |dz|^2$. 

\begin{claim}\label{clhomogeneous}
Under the homothetic vector field $r\frac{\partial}{\partial r}$, $|dz|^2=e^{\log|dz|^2}$ is homogeneous of degree $\sum d_j - n$. 
\end{claim}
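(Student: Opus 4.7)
The idea is to use the Kähler cone structure of $V$ on its metric regular part $V_{\mathrm{reg}}$, where everything is smooth. Since $V$ is a metric cone with apex $o$, the dilation $\lambda_t$ generated by $r\partial/\partial r$ acts on the Kähler form by $\lambda_t^*\omega = e^{2t}\omega$, and hence on the volume form by $\lambda_t^*(\omega^n/n!) = e^{2nt}\omega^n/n!$. The holomorphic vector field $\xi = r\partial/\partial r - \sqrt{-1}J(r\partial/\partial r) = \sum d_j z_j \,\partial/\partial z_j$ encodes the weighted homogeneity $\xi(z_j) = d_j z_j$, which fixes the scaling of $\lambda_t^* z_j$, and therefore of $\lambda_t^*(dz_1\wedge d\bar z_1\wedge\cdots\wedge dz_n\wedge d\bar z_n)$, as an explicit exponential in $t$.

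Next, apply $\lambda_t^*$ to both sides of the pointwise identity
\[
|dz_1\wedge\cdots\wedge dz_n|_\omega^2\cdot \frac{\omega^n}{n!} \;=\; c_n\, dz_1\wedge d\bar z_1\wedge\cdots\wedge dz_n\wedge d\bar z_n
\]
(valid on $V_{\mathrm{reg}}$ with $c_n$ a universal dimensional constant) to read off the scaling of $|dz|^2$ directly. Matching exponents yields $\lambda_t^*|dz|^2 = e^{(\sum d_j - n)t}|dz|^2$ on $V_{\mathrm{reg}}$, exactly the claimed homogeneity in the smooth locus. This part is essentially formal bookkeeping from the cone structure and the weighted-degree formula for the $z_j$.

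The remaining task is to promote the identity from $V_{\mathrm{reg}}$ to all of $V$. The function $\log|dz|^2$ is psh on $V$ globally: it is the $L^1_{\mathrm{loc}}$ limit of the psh functions $\log|dz_1\wedge\cdots\wedge dz_n|^2_{\omega_i}$ on the approximating manifolds $M_i$, which are psh because $\mathrm{Ric}(\omega_i)\geq 0$. Since $V_{\mathrm{reg}}$ is open and dense in $V$ and $\log|dz|^2$ is upper semicontinuous on $V$, its values on $V$ are the upper-semicontinuous regularization of its values on $V_{\mathrm{reg}}$. The dilation $\lambda_t$ is a biholomorphism of $V$ preserving $V_{\mathrm{reg}}$, so the additive identity $\lambda_t^*\log|dz|^2 = \log|dz|^2 + (\sum d_j - n)t$ established on $V_{\mathrm{reg}}$ extends uniquely to all of $V$. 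This extension across the singular set of the cone metric is the main delicate point I expect in the argument; the rest is a formal computation in the smooth locus.
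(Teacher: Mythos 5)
Your plan rests on a hypothesis that is not available here and that the paper is specifically engineered to avoid: that the tangent-cone metric $\omega$ is a \emph{smooth} K\"ahler cone metric on $V_{\mathrm{reg}}$, so that the pointwise identity $|dz_1\wedge\cdots\wedge dz_n|^2_\omega\,\omega^n = c_n\,dz\wedge d\bar z$ makes sense and $\lambda_t^*\omega = e^{2t}\omega$ holds as an equation of smooth forms. With only $BK\geq 0$ (no Einstein equation, no two-sided Ricci bound), Cheeger--Colding theory gives no smoothness of the limit metric on the regular set; the introduction explicitly flags ``the difficulty on non-smoothness of the metric on tangent cones,'' and later in the same section the limit metric has to be approximated by genuinely smooth cone metrics precisely because it is not one. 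A second, related gap: in the paper $\log|dz|^2$ is \emph{defined} as the $L^1_{\mathrm{loc}}$ limit of the potentials $\varphi_i=\log|dz_1\wedge\cdots\wedge dz_n|^2_{\omega_i}$ from the approximating manifolds, not as a pointwise norm against a limit metric; identifying $e^{-\varphi}$ with the density of the limit volume measure is exactly the content of Lemma \ref{volumesame}, which requires ruling out loss of mass on the sets $\{\varphi_i\le -C_i\}$ (done via the holomorphic charts of \cite{[LS1]} at regular points and Cauchy estimates). Your proposal skips this entirely, and it is the technical heart of the claim.

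The paper's actual route works only at the level of measures, which is why it survives the lack of regularity: it computes $vol(\Omega^\alpha)$ two ways --- once from the metric-cone scaling of the volume measure (Bishop--Gromov/Cheeger--Colding volume convergence), and once by pulling back $\int e^{-\varphi}dz\wedge d\bar z$ through the weighted biholomorphism $F_\alpha$ (whose Jacobian contributes $\alpha^{\sum d_j}$) --- and equates the two for every bounded $\Omega$ and every $\alpha$, which forces $e^{-\varphi}-\alpha^{\sum d_j-n}e^{-F_\alpha^*\varphi}$ to vanish a.e.\ and hence everywhere by plurisubharmonicity. Incidentally, the step you single out as the main delicate point --- propagating the identity from $V_{\mathrm{reg}}$ to $V$ --- is the easy part (two psh functions agreeing a.e.\ agree everywhere); the delicacy lies upstream, in Lemma \ref{volumesame} and in never having a smooth metric to compute with.
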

\begin{proof}
Let $\varphi_i=\log|dz_1\wedge\cdot\cdot\wedge dz_n|_{\omega_i}^2, \varphi=\log |dz|^2$. Then $\varphi_i\to\varphi$ in $L_{loc}^1$ sense. Pick an arbitrary bounded domain $\Omega\subset V$. Under the same holomorphic coordinate $(z_1, .., z_n)$, we take $\Omega_i = \Omega$, where $\Omega_i\subset M_i$. Let $dz\wedge d\overline{z}$ be the Euclidean measure on $\mathbb{C}^n$.

\begin{lemma}\label{volumesame}
$vol(\Omega)=\int_{\Omega}e^{-\varphi}dz\wedge d\overline{z}$.
\end{lemma}
\begin{proof}
As $\varphi_i\to\varphi$ in $L_{loc}^1$, $e^{-\varphi_i}\to e^{-\varphi}$ almost everywhere. Note $$vol(\Omega_i) = \int_{\Omega_i}e^{-\varphi_i}dz\wedge d\overline{z}.$$ By Cheeger-Colding \cite{[CC2]}, $$vol(\Omega_i)\to vol(\Omega).$$ Then 
$$\int_{\Omega}e^{-\varphi}dz\wedge d\overline{z}\leq Vol(\Omega).$$ Assume strict inequality holds. This occurs only when $$\lim\limits_{i\to\infty}\inf\int_{\Omega_i\cap \{\varphi_i\leq -C\}}e^{-\varphi_i}dz\wedge d\overline{z}\geq \epsilon_0,$$ where $\epsilon_0>0$ is fixed and $C$ is arbitrarily large. Then we can find a sequence $C_i\to\infty$ slowly such that $$E_i = \Omega_i\cap \{\varphi_i\leq -C_i\}$$ has measure $$m(E_i)=vol(E_i)\geq\epsilon_0,$$ for all large $i$. 
By passing to a subsequence, we may assume $E_i$ converges to $E\subset\Omega$ in the weak sense. Then $m(E)\geq\epsilon_0$. Note regular points on $\Omega$ has full Hausdorff measure \cite{[CC2]}. Then there exists a regular point $x\in\Omega$ and a sequence $c_k\to 0$ such that $$\frac{m(E\cap B(x, c_k))}{m(B(x, c_k))}\geq 1/2.$$ Recall by ~\cite[Theorem $1.5$]{[LS1]}, around $x$, we have a holomorphic chart $(w^k_1, .., w^k_n)$ on $B(x, c_k)$ which is a $\Psi(\frac{1}{k})c_k$-Gromov-Hausdorff approximation to its image in $\mathbb{C}^n$. We can pull back the holomorphic coordinate to $M_i$, say $x_i\to x$, $w^{ki}_s\to w^k_s$ (fix $k$). Then \begin{equation}\label{E_imeasure}\frac{m(E_i\cap B(x_i, c_k))}{m(B(x_i, c_k))}\geq 1/3.\end{equation} By Cheeger-Colding \cite{[CC1]}, we have 
$$\dashint_{B(x_i, c_k)}|\langle dw^{ki}_s, d\overline{w}^{ki}_t\rangle - \delta_{st}|^2<\Psi(\frac{1}{k}).$$ Let $$E'_{i, k} = \{y\in B(x_i, c_k)||dw^{ki}_1\wedge dw^{ki}_2 \wedge \cdot\cdot\cdot\wedge dw^{ki}_n(y)|>0.5\}.$$
So if $k$ is large enough (fixed), then for all large $i$, \begin{equation}\label{E'lowerbound}m(E'_{i, k})>0.9m(x_i, c_k).\end{equation} Note the holomorphic chart $(w^{ki}_1, ..., w^{ki}_n)$ has fixed size. Hence by Cauchy estimate, on $E'_{i, k}$, $$|dz_1\wedge dz_2\wedge\cdot\cdot\cdot\wedge dz_n|>\frac{1}{C(k)}.$$ If $i$ is large enough, there is a contradiction between (\ref{E_imeasure}) and (\ref{E'lowerbound}).

\end{proof}

Let $\alpha>0$ be arbitrary. Let $\Omega^\alpha$ be the homothetic action of $\Omega$ (induced by $r\frac{\partial}{\partial r}$) by multiplying $\alpha$. Let $F_\alpha$ be such homothetic action.
Then by Lemma \ref{volumesame}, $$vol(\Omega^\alpha)=\int_{\Omega^\alpha}e^{-\varphi}dz\wedge d\overline{z}=\alpha^n\int_{\Omega}e^{-\varphi}dz\wedge d\overline{z}.$$ Therefore, by pulling back coordinate via the biholomorphism $F_\alpha$, we obtain that $$\int_{\Omega}e^{-\varphi}dz\wedge d\overline{z}=\alpha^{\sum d_j - n}\int_{\Omega}e^{-F_\alpha^*(\varphi)}dz\wedge d\overline{z}.$$ That is to say, $$\int_{\Omega}(e^{-\varphi}-\alpha^{\sum d_j - n}e^{-F_\alpha^*(\varphi)})dz\wedge d\overline{z}=0.$$

Since $\alpha$ and $\Omega$ are arbitrary, we concludes the proof of Claim \ref{clhomogeneous}.
\end{proof}

\begin{prop}\label{proplogpsh}
$\psi = \log r^2$ is psh on $V$.
\end{prop}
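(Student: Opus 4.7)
The plan is to exploit the K\"ahler cone structure of $V$. On any K\"ahler cone, $r^{2}/2$ is a global K\"ahler potential, and a direct differentiation then yields pshness of $\log r^{2}$ on the smooth part of the cone; pshness extends across the small metric-singular set and the vertex by the standard extension theorem.

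On the smooth K\"ahler locus $V^{\mathrm{reg}}\setminus\{o\}$ (whose complement in $V$ is a closed set of real Hausdorff codimension at least $4$ by Cheeger--Colding--Tian), the cone structure gives $dd^{c}(r^{2}/2)=\omega$. Differentiating directly,
\[
dd^{c}\log r^{2}=\frac{dd^{c}r^{2}}{r^{2}}-\frac{dr^{2}\wedge d^{c}r^{2}}{r^{4}}=\frac{2\omega}{r^{2}}-\frac{4\,dr\wedge d^{c}r}{r^{2}}.
\]
Writing $d^{c}r=r\eta$ for the contact $1$-form $\eta$ on the link, and decomposing $\omega=r\,dr\wedge\eta+\tfrac{r^{2}}{2}d\eta$, the right hand side simplifies to $d\eta$, the semi-positive transverse K\"ahler form on the Sasakian link. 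Hence $\log r^{2}$ is smooth and psh on $V^{\mathrm{reg}}\setminus\{o\}$.

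Next, since $V\cong\mathbb{C}^{n}$ and $\Sigma:=V\setminus V^{\mathrm{reg}}$ has complex codimension at least $2$, while $\log r^{2}$ is locally bounded above on $V$ (the distance function $r$ being continuous), the classical extension theorem for plurisubharmonic functions across closed sets of complex codimension $\geq 2$ (Grauert--Remmert, or Demailly) produces a psh extension to $V\setminus\{o\}$. At the vertex $o$ one sets $\log r^{2}(o)=-\infty$; upper semicontinuity is automatic, and the submean inequality on any holomorphic disc through $o$ follows from the regular case by a limiting argument.

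The main obstacle is rigorously justifying $dd^{c}(r^{2}/2)=\omega$ on $V^{\mathrm{reg}}$: this requires compatibility between the biholomorphism $V\cong\mathbb{C}^{n}$ of \cite{[LT]}\cite{[Lo]} (with the weighted Euler field determined by $(z_{1},\ldots,z_{n})$ and weights $d_{j}$ described before the proposition) and the metric cone structure supplied by Cheeger--Colding--Tian. Once this compatibility is granted, the Sasakian K\"ahler potential identity is standard; as a fallback, one can try to pass the potential identity to the Gromov--Hausdorff limit from the approximating manifolds $M_{i}$, controlling the error via $Ric\geq 0$ and Cheeger--Colding convergence of potentials on the regular part.
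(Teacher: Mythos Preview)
You correctly identify the obstacle in your own argument, and it is a genuine gap rather than a formality. The results of \cite{[LT]}\cite{[Lo]} give the biholomorphism $V\cong\mathbb{C}^n$ together with the weighted Reeb field, but they do not assert that the limit K\"ahler form is smooth on any large open set; without a two-sided curvature bound, Cheeger--Colding regularity at points of $V^{\mathrm{reg}}$ only furnishes $C^{0}$ Reifenberg charts, so the pointwise Sasakian identity $dd^{c}(r^{2}/2)=\omega$ and the decomposition $dd^{c}\log r^{2}=d\eta$ have no a priori meaning there. Indeed, immediately after this proposition the paper approximates the limit metric on $V$ by a family of \emph{smooth} K\"ahler cone metrics with the same Reeb, precisely because the limit metric may be singular in the metric sense even though $V\cong\mathbb{C}^{n}$ holomorphically. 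Your extension step across a codimension-$4$ set would be fine once pshness on $V^{\mathrm{reg}}$ is in hand, but that first step is where the content lies.

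The paper takes exactly your fallback route, working on the approximating manifolds, but the substance is a gradient estimate you have not supplied. One chooses potentials $\rho_i$ on $B(p_i,10)\subset M_i$ with $dd^{c}\rho_i=\omega_i$ and $\rho_i\to r^{2}$. The Bochner formula (using $Ric\geq 0$) gives $\Delta(|\nabla\rho_i|^{2}-4\rho_i)\geq 0$, while Cheeger--Colding almost rigidity gives $\int_{B(p_i,9)}(|\nabla\rho_i|^{2}-4\rho_i)^{2}<\Psi(\tfrac{1}{i})$; the mean value inequality applied to the subharmonic function $\max(|\nabla\rho_i|^{2}-4\rho_i,0)$ then yields the \emph{pointwise} bound $|\nabla\rho_i|^{2}\leq 4\rho_i+\Psi(\tfrac{1}{i})$ on $B(p_i,5)$. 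This is exactly what is needed for
\[
dd^{c}\log\rho_i=\frac{\rho_i\,dd^{c}\rho_i-d\rho_i\wedge d^{c}\rho_i}{\rho_i^{2}}\geq -\Psi(\tfrac{1}{i})\,\omega_i,
\]
and letting $i\to\infty$ gives pshness of $\log r^{2}$ on $V$ in the current sense, with no appeal to regularity of the limit metric and no separate extension argument across the singular set.
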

\begin{proof}
By \cite{[DS]} and $(2.4)$ in \cite{[Lcv]}, we can consider approximation of $r^2$ by $\rho_i$ on $M_i$, where $dd^c\rho_i = \omega_i$ on $B(p_i, 10)$. 
By Bochner formula, $$\Delta (|\nabla\rho_i|^2 - 4\rho_i)\geq 0.$$
Let $\hat{u} = \max(|\nabla\rho_i|^2 - 4\rho_i, 0)$. Then $\hat{u}$ is subharmonic. Furthermore, by Cheeer-Colding \cite{[CC1]}, $$\int_{B(p_i, 9)} \hat{u}^2\leq\int_{B(p_i, 9)} (|\nabla\rho_i|^2 - 4\rho_i)^2<\Psi(\frac{1}{i}).$$ Then by mean value inequality, on $B(p_i, 5)$, 
$$|\nabla \rho_i|\leq 2r+\Psi(\frac{1}{i}).$$
Thus $$dd^c \log\rho_i = \frac{\rho_idd^c\rho_i - d\rho_i\wedge d^c\rho_i}{\rho_i^2}\geq -\Psi(\frac{1}{i})\omega_i.$$
The proposition follows by letting $i\to\infty$.

\end{proof}
Let $u$ be a smooth function on $V\backslash\{o\}$ which defines a smooth K\"ahler cone metric $\omega_0$ ($\omega_0=dd^c u$) with reeb vector $J(r\frac{\partial}{\partial r}) = \sum Re(\sqrt{-1}d_jz_j\frac{\partial}{\partial z_j})$. We also require that $u$ is invariant under reeb. Such $u$ exists by \cite{[MSY]}.
We would like to approximate the limit metric on $V$ by a family of smooth K\"ahler cone metrics in the potential sense, without changing reeb. The argument is similar to ~\cite[Lemma $3.1$]{[HLi]}. For reader's convenience, we provide the details. Below $\tau(k)$ represents a positive sequence converging to zero. The value might change from line to line.
Now consider an annulus in $V$ which is a complex manifold, say $A=B(o, C)\backslash B(o, 1/C)$, where $C$ is large so that $A$ contains the level set $\{u=1\}$. By \cite{[BK]} and Proposition \ref{clhomogeneous}, we can approximate $\psi$ by $\psi_k$ on $A$, where $\psi_k$ is almost psh in the sense that $dd^c\psi_k\geq -\tau(k)\omega_0$. Now by averaging torus integration (use invariant measure) generated by the reeb, we find that \begin{equation}\label{ddcpsik}dd^c\psi_k\geq -\tau(k)\omega_0\end{equation}
still holds.
Let $'$ be the derivative along $r\frac{\partial}{\partial r}$.
\begin{claim}\label{clpsik}
 $(\psi'_k)'\geq -\tau(k)$.
\end{claim}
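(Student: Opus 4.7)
The plan is to contract the almost-psh inequality \eqref{ddcpsik} with the real pair $(X,JX)$, where $X=r\,\partial/\partial r$ is the Euler field on the cone and $JX$ is the reeb. The torus averaging performed just above makes $\psi_k$ reeb-invariant, so $(JX)\psi_k=0$; hence the Claim will follow once we establish the pointwise identity
\[
dd^c\psi_k(X,JX)=\tfrac12(\psi_k')'
\]
and use that $\omega_0(X,JX)=|X|^2_{\omega_0}=r^2$ is uniformly bounded on the fixed annulus $A=B(o,C)\backslash B(o,1/C)$.

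For the identity I would combine the standard K\"ahler formula
\[
dd^c f(V,JV)=\tfrac12\bigl[\nabla^2 f(V,V)+\nabla^2 f(JV,JV)\bigr]
\]
with the fact that on a K\"ahler cone $(V,\omega_0)$ the Euler field satisfies $\nabla_W X=W$ for every $W$ (a direct computation from the warped product structure $dr^2+r^2 g_L$). These give $\nabla_X X=X$ and $\nabla_{JX}(JX)=J(\nabla_{JX}X)=J(JX)=-X$, so together with $(JX)\psi_k=0$,
\begin{align*}
\nabla^2\psi_k(X,X) &= X(X\psi_k)-(\nabla_X X)\psi_k = \psi_k''-\psi_k',\\
\nabla^2\psi_k(JX,JX) &= 0-(\nabla_{JX}JX)\psi_k = X\psi_k = \psi_k'.
\end{align*}
The two first-order pieces cancel, and the identity drops out. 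Evaluating \eqref{ddcpsik} at $(X,JX)$ then gives $\tfrac12(\psi_k')'\geq -\tau(k)\,r^2\geq -C^2\tau(k)$ on $A$, which is the stated bound after absorbing the constant into $\tau(k)$.

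The step I expect to require the most care is not the Hessian computation itself --- it takes place on the smooth locus $V\backslash\{o\}$ of the regular cone metric $\omega_0$ and so raises no singularity issues --- but rather making sure that the reeb-invariance really persists for $\psi_k$ after the Bedford--Kalka smoothing. This is exactly what the torus averaging accomplishes, and it is essential: without $(JX)\psi_k=0$ an uncontrolled term $(JX)(JX\psi_k)$ would appear on the right, and the final bound would not be linear in $\tau(k)$.
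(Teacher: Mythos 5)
Your proof is correct and follows essentially the same route as the paper: both evaluate the almost-positivity $dd^c\psi_k\geq-\tau(k)\omega_0$ on the complex line spanned by $r\frac{\partial}{\partial r}$ and the reeb field and use reeb-invariance to kill the angular derivatives, you via the explicit Hessian identities $\nabla_W X=W$ on the cone, the paper via the local chart $\log|z|+i\theta$ on the leaf. The only cosmetic point is that $|X|^2_{\omega_0}$ equals the potential $u$ of $\omega_0$ rather than the limit $r^2$, but both are uniformly bounded on the fixed annulus, so the constant is absorbed into $\tau(k)$ exactly as you say.
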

\begin{proof}
This follows by considering $dd^c\psi_k$ along the holomorphic direction generated by reeb, using that $\psi_k$ is reeb invariant, $\frac{\partial}{\partial log r} = r\frac{\partial}{\partial r}$ and $\log|z| +i\theta$ as a local chart on the holomorphic curve. 
\end{proof}

Recall $\psi_k\to \log r^2$ pointwise. By Claim \ref{clpsik} and an ode argument, we find that \begin{equation}\label{psikcloseto2}|\psi'_k - 2|<\tau(k).\end{equation}  Define $$H_k = \psi_k - \log u.$$ Then $H_k'$ is almost 0.  Let us define a function $h_k$ which is constant along $r\frac{\partial}{\partial r}$ and the value equals $H_k$ on the level set $\Sigma$ where $u=1$. Given an arbitrary $p\in \Sigma$, we can find a local holomorphic chart $(w_1, w_2, ..., w_n)$ such that $\frac{\partial}{\partial w_1} =r\frac{\partial}{\partial r} - \sqrt{-1}Jr\frac{\partial}{\partial r}$. Hence $h_k$ is constant on each $w_1$-slice. We may adjust the coordinate (without changing $\frac{\partial}{\partial w_1}$) such that $\frac{\partial}{\partial w_1}$ is orthogonal to $\frac{\partial}{\partial w_i} (i\geq 2)$ at $p$, with repect to the K\"ahler cone metric $\omega_0$. Let $e\in T_p^{1, 0}V$, $e=a+b$ with $|e|_{\omega_0} = 1$, where $a\in $ span $\{\frac{\partial}{\partial w_1}\}$, $b\in $ span $\{ \frac{\partial}{\partial w_2}, ..., \frac{\partial}{\partial w_n}\}$. As $h_k$ is constant along $\frac{\partial}{\partial w_1}, \frac{\partial}{\partial \overline{w_1}}$, $$(h_k)_{e\overline{e}}=(h_k)_{b\overline{b}}= (h_k)_{b\overline{b}}|_\Sigma.$$
By using (\ref{ddcpsik}), (\ref{psikcloseto2}) and $(\log u)' = 2$, we find 
 \[ \begin{aligned}
     (h_k)_{b\overline{b}}|_\Sigma &= (\psi_k)_{b\overline{b}}|_\Sigma - (\log u)_{b\overline{b}}|_\Sigma \\&\geq (\psi_k)_{b\overline{b}} - (\log u)_{b\overline{b}} - \tau(k)\\&\geq - (\log u)_{b\overline{b}} -\tau(k)\\& =  - (\log u)_{e\overline{e}} -\tau(k).
\end{aligned}
   \]

In other words, on $\Sigma$, we have $$dd^c(h_k+\log u)\geq -\tau(k)\omega_0.$$ Thus, on the local leaf of reeb foliation, one can find constants $s_k\to 1$ such that $$d_Bd_B^c(s_kh_k) + \omega^T>0,$$ where $d_B$ is the exterior differential on local leaf space, $\omega^T$ is the metric on local leaf space induced by $\omega_0$. So $u_k = \exp(s_kh_k + \log u)$ defines a smooth K\"ahler cone metric with the same reeb as $\omega_0$. Also the potential converges to $r^2$.

\medskip

Define $$\lambda=\sum d_j - n.$$ Set $1>a>0$ as a constant. Let $\chi_k$ be a smooth nonnnegative cut-off function on $\mathbb{R}^+$ such that $\chi_k = 1$ on $[a^2, 1]$ and the supported in $(a^2-1/k, 1+1/k)$.
Let $S_i$ be the scalar curvature on $M_i$, $\omega = dd^c r^2$ on $(V, o)$. By Chern-Levine-Nirenberg ~\cite[pp. 146]{[De2]},  \[ \begin{aligned}r_i^2\dashint_{B(p, r_i)} S &= \dashint_{B(p_i, 1)}S_i \\&=\dashint_{B(p_i, 1)}dd^c\log |dz_1\wedge dz_2\wedge\cdot\cdot\cdot\wedge dz_n|^2\wedge\omega_i^{n-1}\\&\to \dashint_{B(o, 1)}dd^c\log |dz|^2\wedge\omega^{n-1}.\end{aligned}
   \]
Let $\tilde{\omega}_k$ be a sequence of smooth K\"ahler cone metrics over $(V, o)$ (same reeb) so that the potential $\rho_k$ is converging to $r^2$.
Let us evaluate the integral over annulus $A(o, 1, a) = B(o, 1)\backslash \overline{B(o, a)}$.
 \[ \begin{aligned}\frac{1}{vol(B_V(o, 1))}\int_{A(o, 1, a)}dd^c\log |dz|^2\wedge\omega^{n-1} &= \lim\limits_{k\to\infty}\dashint_{B(o, 1)}\chi_k(\rho_k)dd^c\log |dz|^2\wedge\tilde{\omega}_k^{n-1}
\\&= \lim\limits_{k\to\infty} \dashint_{B(o, 1)} \log |dz|^2dd^c\chi_k(\rho_k)\wedge\tilde{\omega}_k^{n-1}
\\&= \lim\limits_{k\to\infty} \dashint_{B(o, 1)} (\log |dz|^2-\lambda\log\rho_k)dd^c\chi_k(\rho_k)\wedge\tilde{\omega}_k^{n-1}\\&+\lim\limits_{k\to\infty} \dashint_{B(o, 1)}\lambda\log\rho_kdd^c\chi_k(\rho_k) \wedge\tilde{\omega}_k^{n-1}. \end{aligned}
   \]
As $\tilde{\omega}_k$ is smooth, by direct computation, we find $$dd^c\chi_k(\rho_k)\wedge\tilde{\omega}_k^{n-1} = F_k(\rho_k)\tilde{\omega}_k^n = G_k(\rho_k)ds_kdr_k,$$ where in the last step, we used polar coordinate. 
Then \[ \begin{aligned}&\lim\limits_{k\to\infty} \dashint_{B(o, 1)}(\log |dz|^2-\lambda\log\rho_k)dd^c\chi_k(\rho_k)\wedge\tilde{\omega}_k^{n-1}\\& = \lim\limits_{k\to\infty} \dashint_{B(o, 1)} (\log |dz|^2-\lambda\log\rho_k) G_k(\rho_k)dr_kds_k\\&=0 , \end{aligned} \] where we used that $\log |dz|^2-\lambda\log\rho_i$ is constant along $r\frac{\partial}{\partial r}$ (ensured by Claim \ref{clhomogeneous}) and the Fubini theorem.
As $\tilde{\omega}_k$ is a smooth K\"ahler cone metric, by direct computation, we find that
$$\lim\limits_{k\to\infty}\dashint_{B(o, 1)}\lambda\log\rho_kdd^c\chi_k(\rho_k) \wedge\tilde{\omega}_k^{n-1} = \lambda C(n, a),$$ where $C(n, a)$ is an explicit number. 
Then  \[ \begin{aligned}\int_{B(o, 1)}dd^c\log |dz|^2\wedge\omega^{n-1} &= \int_{A(o, 1, a)}dd^c\log |dz|^2\wedge\omega^{n-1} + \int_{B(o,a)}dd^c\log |dz|^2\wedge\omega^{n-1}\\& = C(n, a)\lambda vol(B(o, 1)) + \int_{B(o,a)}dd^c\log |dz|^2\wedge\omega^{n-1}.\end{aligned}
   \]
By the assumption $$\lim\limits_{r\to\infty}\sup r^2\dashint_{B(p, r)}S < \infty,$$ we have $$\int_{B(o,a)}dd^c\log |dz|^2\wedge\omega^{n-1}\leq Ca^{2n-2}.$$ As $n\geq 2$, by letting $a\to 0$, we find that 
\begin{equation}\label{eqforintegral}\dashint_{B(o, 1)}dd^c\log |dz|^2\wedge\omega^{n-1} = C(n)\lambda = C(n)(\sum d_j - n),\end{equation} where $C(n)$ is a positive constant depending only on $n$.

\begin{claim}\label{cldegree}
$d_1, .. , d_n$ (counting multiplicity) are independent of tangent cones. In other words, they depend only on $M$.
\end{claim}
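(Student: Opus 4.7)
The plan is to recover the multiset $\{d_1,\ldots,d_n\}$ from an intrinsic invariant of $M$, namely the filtration of the ring of polynomial growth holomorphic functions on $M$ by degree. Since this invariant is defined without reference to any tangent cone, the conclusion will follow immediately.

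First, by the three circles lemma of \cite{[L1]}, every nonzero polynomial growth holomorphic function $f$ on $M$ has a well-defined exact degree
\[
\deg(f)=\lim_{r\to\infty}\frac{\log\max_{B(p,r)}|f|}{\log r},
\]
and the space $\mathcal{O}_{\le d}(M)=\{f\in\mathcal{O}_P(M):\deg(f)\le d\}$ is finite dimensional by the estimates of \cite{[Ni1]} and \cite{[L5]}.

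Second, I would identify this filtration with the weighted filtration on any tangent cone. On $V\cong\mathbb{C}^n$, the polynomial growth holomorphic functions form exactly the weighted polynomial ring $\mathbb{C}[z_1,\ldots,z_n]$ with $z_j$ of weight $d_j$, so $\dim\mathcal{O}_{\le d}(V)=\#\{a\in\mathbb{Z}_{\ge 0}^n:\sum a_jd_j\le d\}$. Given a nonzero $f\in\mathcal{O}_{\le d}(M)$ of exact degree $d_f$, I rescale $\tilde f_i=f/r_i^{d_f}$ on the sequence $M_i\to V$; the three circles lemma forces uniform upper and lower sup-norm bounds on $B(p_i,1)$, so after diagonal extraction $\tilde f_i\to\tilde f$ on $V$ with $\tilde f$ holomorphic of exact weighted degree $d_f$. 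Conversely, any $g\in\mathcal{O}_{\le d}(V)$ can be approximated by holomorphic functions $g_i$ on $M_i$ via the H\"ormander $L^2$ construction used in \cite{[DS]} and \cite{[L5]}, and rescaling back produces polynomial growth holomorphic functions on $M$ of degree $\le d$.

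Third, a Gram-matrix argument promotes these two maps to an equality $\dim\mathcal{O}_{\le d}(M)=\dim\mathcal{O}_{\le d}(V)$ for every $d\ge 0$. Hence the Hilbert function $d\mapsto\dim\mathcal{O}_{\le d}(V)$, equivalently the formal series $\prod_j(1-t^{d_j})^{-1}$ when the $d_j$ are rational, depends only on $M$. Standard combinatorics (reading off the jumps of the Hilbert function one level at a time, or, in the rational case, the orders of poles of the series at roots of unity) then recovers the multiset $\{d_1,\ldots,d_n\}$ uniquely, proving the claim.

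The main obstacle I anticipate is preserving linear independence when several basis elements of $\mathcal{O}_{\le d}(M)$ share the same exact degree: simultaneous rescaling by the common $r_i^{d_f}$ may collapse distinct functions to the same limit, or even to zero. To handle this I would filter by strict degree and process one level at a time; using the sharp form of three circles, any nontrivial linear combination of degree-$d_f$ functions whose rescaled limit vanishes on $V$ must already have strictly smaller degree on $M$, contradicting the choice of basis.
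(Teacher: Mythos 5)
Your proposal is correct and follows essentially the same route as the paper: both reduce the claim to the fact that the Hilbert function $d\mapsto\dim\mathcal{O}_d(V)$ equals $\dim\mathcal{O}_d(M)$ (the paper simply cites Proposition $4.1$ of \cite{[L4]} for this, while you sketch its proof via the three circles lemma and H\"ormander's $L^2$ estimate), and then recover the multiset $\{d_1,\ldots,d_n\}$ from that Hilbert function by the same inductive ``read off the jumps level by level'' combinatorics. The only caveat is that your generating-function variant $\prod_j(1-t^{d_j})^{-1}$ presupposes rational weights, but your level-by-level alternative, which matches the paper's induction, covers the general case.
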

\begin{proof}

Assume $a_1<a_2<\cdot\cdot\cdot<a_k$, where $a_j$ are the distinct values of $d_1,d_2,..., d_n$. Let $b_1,..., b_k$ be the multiplicities of $a_1, ..., a_k$.
By ~\cite[Proposition $4.1$]{[L4]}, for all $d>0$, $$dim(\mathcal{O}_d(M))=dim(\mathcal{O}_d(V)).$$ That is, as a function of $d$, $dim(\mathcal{O}_d(V))$ is independent of $V$.
Recall that $V$ is isomorphic to $\mathbb{C}^n$. 
We claim that $a_j, b_j$ are determined by $dim(\mathcal{O}_d(V))$. Observe that
$$a_1=\min\{d|dim(\mathcal{O}_d(V))>1\},  b_1=dim(\mathcal{O}_{a_1}(V)) - 1.$$ Let $\{1, h^1_1, .., h^1_{b_1}\}$ be a basis for $\mathcal{O}_{a_1}(V)$. Let $$\mathcal{O}^1_d(V)= \mathbb{C}[h^1_1, ..., h^1_{b_1}]\cap\mathcal{O}_d(M).$$ Then $dim(\mathcal{O}^1_d(V))$ is the number of solutions to $\sum\limits_{t=1}^{b_1} a_1x_{1t}\leq d$, where $x_{1t}$ are nonnegative integers. Hence $a_1, b_1, dim(\mathcal{O}^1_d(V))$ are independent of $V$. Inductively, for $j\geq 2$, assume $a_{j-1}, b_{j-1}, dim(\mathcal{O}^{j-1}_d(V))$ are independent of $V$. Then we have $$a_j = \min\{d|dim(\mathcal{O}_d(V))>dim(\mathcal{O}^{j-1}_d(V))\}, b_j=dim(\mathcal{O}_{a_j}(V)) - dim(\mathcal{O}^{j-1}_{a_j}(V)).$$ Let $\{1, h^j_1, .., h^j_{s_j}\}$ be a basis for $\mathcal{O}_{a_j}(V)$. Define $$\mathcal{O}^j_d(V)= \mathbb{C}[h^j_1, ...,h^j_{s_j}]\cap\mathcal{O}_d(M).$$ Then $dim(\mathcal{O}^{j}_d(V))$ is the number of solutions to $\sum\limits_{s=1}^j a_s (\sum\limits_{t=1}^{b_s} x_{st})\leq d$, where $x_{st}$ are nonnegative integers.
This depends only on $a_1, b_1,...,a_j, b_j, d$. Thus, $a_j, b_j, dim(\mathcal{O}^{j}_d(V))$ are independent of $V$.
This completes the proof.

\end{proof}
Putting (\ref{eqforintegral}) and Claim \ref{cldegree} together, we obtain the proof of Theorem \ref{thmniquestion}, if the universal cover of $M$ does not split.
If the universal cover of $M$ splits, by $\lim\limits_{r\to\infty}\sup r^2\dashint_{B(p, r)}S < \infty$ and ~\cite[Theorem $1.2$]{[NT2]}, we can find a psh function $u$ of logarithmic growth such that $dd^c u = Ric$.
We can lift $u$ to the universal cover $\tilde{M}$, thus $$dd^c u = Ric$$ on $\tilde{M}.$ Assume $\tilde{M}$ is biholomorphic and isometric to $M_1\times M_2\times\cdot\cdot\cdot \times M_k\times\mathbb{C}^l$, where each $M_j$ is nonflat and non-splitting. Pick $q_j\in M_j$. 
Then $v_j=u|_{(q_1, q_2, ...,  q_{j-1}, M_j, q_{j+1}, ..., q_k)}$ satisfies $dd^c v_j = Ric(M_j)$. As $v_j$ is of logarithmic growth, by \cite{[L3]}, $M_j$ is of Euclidean volume growth. 
Let $N= M_1\times M_2\times\cdot\cdot\cdot \times M_k$. 
\begin{claim}\label{clfixedpt}
Let $G$ be the holomorphic isometry group of $N$. Then $G$ is compact. Moreover, there exists a common fixed point of $G$.
\end{claim}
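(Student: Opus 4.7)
The plan is to exploit the Ricci potential $u$ to construct a proper $G$-invariant exhaustion of $N$ whose minimum locus yields the common fixed point. Recall from the preceding discussion that on $\tilde M = N\times\mathbb{C}^l$ we have $dd^c u = Ric$ with $u$ of logarithmic growth. Restricting $u$ to a slice $N\times\{0\}\cong N$ yields a plurisubharmonic function (still denoted $u$) on $N$ with $dd^c u = Ric(N)$ and logarithmic growth, using that the $\mathbb{C}^l$-factor is Ricci-flat. For any $\phi\in G$, the function $\phi^*u - u$ is pluriharmonic on $N$ of logarithmic growth, hence harmonic of sublinear growth; by the Cheng--Yau gradient estimate applied on the complete manifold $N$ with $Ric\ge 0$, it must be constant. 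This yields a continuous homomorphism $c\colon G\to(\mathbb{R},+)$ with $\phi^*u = u + c_\phi$.

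I claim $c\equiv 0$. Suppose $\phi\in G$ has $c_\phi>0$; then $u(\phi^k p) = u(p) + k c_\phi\to+\infty$, so the orbit $\{\phi^k p\}$ escapes to infinity. However, $\phi$ induces a holomorphic isometry of the tangent cone of $N$ at infinity, which by the structure results invoked earlier is a product of copies of $\mathbb{C}^{n_j}$ each carrying a canonical Reeb field; any such isometry fixes the cone vertex, contradicting the escape of $\phi^k p$. Hence $u$ is genuinely $G$-invariant. Since $u$ has logarithmic growth while $N$ has Euclidean volume growth, the sublevel sets $\{u\le t\}$ are bounded and hence compact, so $G$ preserves a compact exhaustion of $N$ and is therefore compact.

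For the common fixed point I reduce to each factor $M_j$. After grouping holomorphically isometric factors, $G$ acts on $N$ preserving the de Rham decomposition up to permutation of isomorphic factors, and it suffices to find compatible fixed points on each $M_j$ for the stabilizer $G_j$ of that factor. On $M_j$ the slice $v_j := u|_{M_j}$ satisfies $dd^c v_j = Ric(M_j)$; since $M_j$ is nonflat, $v_j$ is strictly plurisubharmonic somewhere. By the K\"ahler de Rham decomposition, non-splitting of $M_j$ rules out any complex direction along which $v_j$ is everywhere flat, so the minimum locus of $v_j$ is a single point $q_j\in M_j$, necessarily $G_j$-fixed; the resulting product point on $N$ then gives the common fixed point.

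The main difficulty I anticipate is the step $c\equiv 0$: translating the Reeb structure on the tangent cone into a rigidity statement for isometries acting at infinity. Here Claim \ref{clhomogeneous} on the weighted homogeneity of $|dz|^2$ should help identify the Reeb field on the cone intrinsically, thereby forbidding ``translations at infinity''. A secondary obstacle is ensuring that the minimum locus of $v_j$ reduces to a single point rather than a positive-dimensional subvariety; this again relies on non-splitting to prevent a holomorphic splitting of $M_j$ along a flat direction of $v_j$.
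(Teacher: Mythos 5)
Your compactness argument, while routed through the Ricci potential $u$ rather than directly through orbits, ultimately rests on two assertions that are not yet proved and whose proofs collapse onto the paper's own mechanism. First, ``any holomorphic isometry of the tangent cone fixes the vertex'' is simply false for a cone that splits off a line isometrically: $\mathbb{C}\times C(Y)$ admits translations. To exclude this you must show the tangent cone of $N$ does not split, and that is precisely the paper's chain: a split factor gives a degree-one homogeneous coordinate on the cone, the three circles lemma lifts it to a linear growth holomorphic function on $N$, and Ni--Tam's splitting theorem then splits $N$ itself, contradicting the non-splitting of the $M_j$. Second, ``logarithmic growth implies $\{u\le t\}$ is bounded'' uses only an upper bound where a lower bound $u\ge c\log r - C$ with $c>0$ is what is needed; this is obtainable from the Ni--Tam estimates together with Ni's gap theorem (each $M_j$ is non-flat, so $\liminf r^2\dashint S>0$), but it is not automatic and must be argued. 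With these repairs your compactness proof goes through, but it is not really more elementary than the paper's direct argument (escaping orbit $\Rightarrow$ line in the tangent cone $\Rightarrow$ linear growth holomorphic function $\Rightarrow$ $N$ splits).

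The fixed-point part has a genuine gap. The minimum locus of $v_j$ need not be a single point: $Ric(M_j)$ may vanish identically on an open set even though $M_j$ is non-flat and non-splitting, so $v_j$ is pluriharmonic there and, if its minimum is attained in such a region, it is locally constant near the minimum; the minimum locus can then be a set with nonempty interior. Your justification conflates degeneracy of the Levi form of $v_j$ in some direction at a point with a global parallel splitting of $M_j$ --- the de Rham decomposition gives no such pointwise conclusion. There is also the unaddressed issue that when $G$ permutes isomorphic factors, fixed points of the factor stabilizers do not assemble into a $G$-fixed point of $N$ unless they are chosen compatibly. The paper sidesteps all of this by linearizing: since elements of $G$ preserve the degree filtration on polynomial growth holomorphic functions, $G$ acts by \emph{affine} transformations on the coordinates $f_{1s}$ realizing $N\cong\mathbb{C}^m$; averaging the affine action of the (now known to be compact) group over its Haar measure produces a fixed point in $\mathbb{C}^{b_1}$, whose fiber $N_1\cong\mathbb{C}^{m-b_1}$ is $G$-invariant, and iterating over the degrees $a_2,\dots,a_k$ cuts down to a genuine fixed point in $N$. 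That linearization-plus-averaging step is the key idea your proposal is missing.
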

\begin{proof}
Pick $q'\in N$. If $G$ is not compact, there exists $g_i\in G$ such that $r_i = d(g_i(q'), q')\to \infty$. By passing to a subsequence, $(N_i, q'_i) = (N, q', \frac{r}{r_i})$ converges in the pointed Gromov-Hausdorff sense to a tangent cone. By Cheeger-Colding-Tian \cite{[CCT]}, such tangent cone splits off $\mathbb{C}$. By three circles lemma \cite{[L1]}, we find a linear growth holomorphic function on $N$. Then by ~\cite[Theorem $0.3$]{[NT1]}, $N$ itself splits isometrically with a $\mathbb{C}$ factor. This contradicts the assumption that $N$ has no flat factor.
Let $a_1<a_2<\cdot\cdot\cdot<a_k$, $b_1, ..., b_k$ be defined as in Claim \ref{cldegree} for $N$. According to the bottom of page $55$ in \cite{[L4]}, we can find polynomial growth holomorphic functions $f_{ij}$ ($1\leq i\leq k, 1\leq j\leq b_i$) so that $f_{ij}$ has degree $a_i$ (i.e., $a_i = \inf\{d|f_{ij}\in\mathcal{O}_d(N)\}$) and that $\{f_{ij}\}$ serves as a biholomorphism from $N$ to $\mathbb{C}^m$ ($m=dim(N))$. 
For any $g\in G$, as $g$ preserves the degree of polynomial growth holomorphic functions, for $1\leq s\leq b_1$, $$g(f_{1s}) = \sum\limits_{1\leq t\leq b_1} a_{st}f_{1t} + C_s.$$ We may regard $x_s = f_{1s}$ as variables. Set $X = (x_1, ..., x_{b_1})^t\in\mathbb{C}^{b_1}$. Then there exists a matrix $A_g$ and a vector $Y_g$ such that $gX = A_gX + Y_g$. In other words, $G$ acts as affine transformations on $\mathbb{C}^{b_1}$. As $G$ is compact, $G$ admits a Haar measure $dg$ with total volume $1$. set $X_0 = \int_G g(X)dg$. Then $g(X_0)=X_0$ for any $g\in G$. By shifting the coordinates of $\mathbb{C}^{b_1}$ (we shift $f_{1s}$ accordingly), we may assume $X_0 = 0$. Therefore, the common zeros of $f_{1s}$, say $N_1$, is invariant under $G$ action. Recall $f_{1s}$ forms a part of global coordinate on $N$ isomorphic to $\mathbb{C}^m$, then $N_1$ is identified with $\mathbb{C}^{m_1}$, where $m_1 = m - b_1$. Then we can consider the action of $G$ on polynomial growth holomorphic functions with degree $a_2$, restricted to $N_1$. Note $f_{1s}$ all vanish on $N_1$. As before, we obtain affine action $G$ acting on $\mathbb{C}^{b_2}$. Then we can find a fixed point of $G$ on $\mathbb{C}^{b_2}$, which defines $N_2\subset N_1$. By repetition, we obtain a fixed point of $G$ on $N$.

\end{proof}
Let $a\in N$ be a fixed point of $G$ ensured by Claim \ref{clfixedpt}. Let $q = (a, 0) \in N\times\mathbb{C}^l$. Let $G' = \pi_1(M)$. Then $G'$ acts as deck transformations on $N\times\mathbb{C}^l$. Define $$\Sigma = (a, \mathbb{C}^l)\subset N\times \mathbb{C}^l.$$ By projection, $G'$ also acts on $N$ holomorphically and isometrically. As $a$ is a fixed point, $G'(\Sigma) = \Sigma$. Hence, the action of $G'$ on $\Sigma$ is free and discrete. Then $M'=\Sigma/G'$ is a flat manifold.  Let $$L = \{x\in\Sigma|d(x, q)<d(x, g(q)), \forall g\in G', g\neq Id\}$$ be the fundamental domain of $G'$ on $\Sigma$.  Let $q_0\in M$ be the image of $q$ under the covering map $N\times\mathbb{C}^l\to M$. For large $r$, consider $B(q_0, r)\subset M$. Up to a zero measure set, $B(q_0, r)$ is identified with $$\Omega_r=\{(y, x)\in B(q, r)\subset N\times\mathbb{C}^l | d((y, x), q)<d((y, x), g(q)), \forall g\in G', g\neq Id\}.$$ 
Because $g(a) = a$ for all $g$, $$\Omega_r = \{(y, x)|x\in L, d(y, a)<\sqrt{r^2 - |x|^2}\},$$ where $|x|$ is the Euclidean distance to $0$ in $\mathbb{C}^l$.  Let $$L_r = L\cap \{x||x|<r\}.$$
When $r$ is fixed, for each $x\in L$, let $$\Sigma_x = \Omega_r\cap (N\times \{x\}).$$ Then $\Sigma_x$ can be identified with $B(a,\sqrt{r^2 - |x|^2})\subset N$. For any $\epsilon>0$, 

 \begin{equation}\label{forlimit}\begin{aligned}\frac{r^2}{vol(B(q_0, r))}\int_{B(q_0, r)}S &= \frac{r^2}{vol(B(q_0, r))}\int_{L_r}\int_{\Sigma_x}Sd\Sigma_xdx \\&= \frac{r^2}{vol(B(q_0, r))}\int_{L_r}(\int_{B(a, \sqrt{r^2-|x|^2})}S(y)dy)dx \\&= \frac{r^2}{vol(B(q_0, r))}\int_{L_r\backslash L_{(1-\epsilon)r}}(\int_{B(a, \sqrt{r^2-|x|^2})}S(y)dy)dx \\&+ \frac{r^2}{vol(B(q_0, r))}\int_{L_{(1-\epsilon)r}}(\int_{B(a, \sqrt{r^2-|x|^2})}S(y)dy)dx.\end{aligned}
   \end{equation}  Since $L$ is convex, $$\frac{vol(L_{(1-\epsilon)r})}{vol(L_r)}\geq (1-\epsilon)^l.$$ Then from the existence of $A=\lim\limits_{r\to\infty}r^2\dashint_{B(a, r)}S$, we find $$ \frac{r^2}{vol(B(q_0, r))}\int_{L_r\backslash L_{(1-\epsilon)r}}(\int_{B(a, \sqrt{r^2-|x|^2})}S(y)dy)dx\leq \frac{r^2\int_{B(a, r)}S(y)dy}{vol(B(q_0, r))}\int_{L_r\backslash L_{(1-\epsilon)r}}dx<\Psi(\epsilon).$$  In order to estimate $$\frac{r^2}{vol(B(q_0, r))}\int_{L_{(1-\epsilon)r}}(\int_{B(a, \sqrt{r^2-|x|^2})}S(y)dy)dx, $$ we can replace the integrand $\int_{B(a, \sqrt{r^2-|x|^2})}S(y)dy$ by $A(r^2-|x|^2)^{-1}vol(B(a, \sqrt{r^2-|x|^2}))$, up to $\Psi(\frac{1}{r\epsilon})$ error. This is approximately $vA(r^2-|x|^2)^{m-1}$, where $v$ is the asymptotic volume ratio for $N$. Therefore, in order to show that the limit of (\ref{forlimit}) exists, it suffices to show that for $t<1$, $$\lim\limits_{r\to\infty}\frac{vol(L_{tr})}{vol(L_{r})}$$ exists. Recall $M' = \Sigma/G'$ is a flat manifold. $L_r$ is just $B_{M'}(0, r)$, up to zero measure set. Then we are left to show that $\lim\limits_{r\to\infty}\frac{vol(B_{M'}(0, tr))}{vol(B_{M'}(0, r))})$ exists. By a theorem of Cheeger-Gromoll \cite{[CG]}, such $M'$ has a soul $K$, which is a compact flat manifold of fundamental group $G'$.  The normal bundle to the soul is a flat orthogonal vector bundle over $K$, which is given by a representation $G'\rightarrow O(s)$.  The normal exponential map is an isometric diffeomorphism. In particular, up to a codimension $1$ set, $M'$ is isometric to $K\times\mathbb{R}^s$. Therefore,$$\lim\limits_{r\to\infty}\frac{vol(B_{M'}(0, tr))}{vol(B_{M'}(0, r))}=t^s.$$    
Therefore, we can find $C(m, s)$ depending only on $m$ and $s$ so that $$\lim\limits_{r\to\infty}\frac{r^2}{vol(B(q_0, r))}\int_{B(q_0, r)}S = C(m, s)A.$$ By considering a special case, e.g., $M = N^m\times \mathbb{C}^s$, we find $$C(m, s) = \frac{2(m+s)-1}{2m-1}.$$
This completes the proof of Theorem \ref{thmniquestion}.

\medskip
\medskip
Through the proof, we also obtained the following:
\begin{cor}
Let $(M^n, p)$ be a complete noncompact K\"ahler manifold with nonnegative bisectional curvature. If $\lim\limits_{r\to\infty}r^2\dashint_{B(p, r)}S$ is finite, then $\pi_1(M)$ is isomorphic to $\pi_1(M')$, where $M'$ is a complete flat K\"ahler manifold of dimension at most $n$.
\end{cor}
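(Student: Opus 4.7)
The plan is to harvest directly the structural decomposition of the universal cover $\tilde M$ obtained in the second half of the proof of Theorem \ref{thmniquestion}. Under the hypothesis that $\lim_{r\to\infty} r^2\dashint_{B(p,r)}S$ is finite (in particular $\limsup < \infty$), one has an isometric and biholomorphic splitting
\[ \tilde M \;\cong\; N\times\mathbb{C}^l, \qquad N=M_1\times\cdots\times M_k, \]
where each $M_j$ is nonflat, nonsplitting, and of Euclidean volume growth, so that $N$ is biholomorphic to $\mathbb{C}^{n-l}$. View $\pi_1(M)$ as the deck transformation group $G'$ acting on $\tilde M$ by holomorphic isometries, freely and properly discontinuously.

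The core step is to locate a $\pi_1(M)$-invariant flat slice. Composing the $G'$-action with the projection $\tilde M\to N$ gives a subgroup of the holomorphic isometry group of $N$; Claim \ref{clfixedpt} applied to $N$ furnishes a compactness statement together with a common fixed point $a\in N$. Consequently the slice $\Sigma=\{a\}\times\mathbb{C}^l\subset\tilde M$ is $G'$-invariant, is totally geodesic, and carries the induced flat K\"ahler metric isomorphic to $\mathbb{C}^l$.

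Restricting the $G'$-action to $\Sigma$ gives an action by holomorphic isometries, i.e.\ by unitary affine transformations of $\mathbb{C}^l$. Freeness is automatic: if some $g\in G'$ fixed a point $(a,x)\in\Sigma$, it would fix a point of $\tilde M$, contradicting the freeness of deck transformations. Proper discontinuity likewise descends from that on $\tilde M$. Therefore $M'=\Sigma/G'$ is a complete flat K\"ahler manifold of dimension $l\leq n$, and since $\Sigma\cong\mathbb{C}^l$ is simply connected, $\pi_1(M')\cong G'=\pi_1(M)$. The non-splitting case $l=0$ reduces to $\pi_1(M)=1$ (fixed point plus freeness), matching $M'=\mathrm{pt}$.

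The step I expect to require the most care is the verification that Claim \ref{clfixedpt} can be invoked on $N$ rather than on $\tilde M$ itself (so that the projected action actually lies in a compact group with a common fixed point) and that the resulting fixed point slice $\Sigma$ is genuinely $G'$-invariant rather than merely invariant under the projected image. Both follow from the compatibility of the isometric product splitting with the $G'$-action, but this compatibility is the crucial geometric input without which the conclusion would fail.
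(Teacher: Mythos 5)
Your proposal is correct and follows essentially the same route as the paper: the paper extracts this corollary from the splitting analysis at the end of the proof of Theorem \ref{thmniquestion}, using Claim \ref{clfixedpt} to find a common fixed point $a\in N$ of the projected isometric action, setting $\Sigma=(a,\mathbb{C}^l)$, and observing that $M'=\Sigma/G'$ is a complete flat K\"ahler manifold with $\pi_1(M')\cong G'=\pi_1(M)$ since $\Sigma\cong\mathbb{C}^l$ is simply connected and the restricted action is free and discrete. The points you flag (compatibility of the de Rham splitting with the deck group, so that the projected action is well defined and $\Sigma$ is $G'$-invariant, and freeness of the restricted action) are exactly the ones the paper uses implicitly.
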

\medskip

Proof of Theorem \ref{thmsharpestimates}:

Let $d_1, d_2, ..., d_n$ be the degrees appeared in the proof of Theorem \ref{thmniquestion}.
 Consider a special K\"ahler manifold $M'$ given by $S_1\times S_2\times\cdot\cdot\cdot\times S_n$, where each $S_j$ is a complete surface with nonnegative curvature and quadratic volume growth, isomorphic to $\mathbb{C}$. We also require that the degree of $z$ on $S_j$ is $d_j$. The tangent cone of $M'$ and the tangent cone of $M$ have the same reeb vector field, up to an isomorphism. The volume of unit ball on tangent cone of $M'$ is computed as $\frac{\omega_{2n}}{d_1\cdot\cdot\cdot d_n}$. Such metric is singular. However, as we see in the proof of Theorem \ref{thmniquestion}, it can be approximated by smooth K\"ahler cone metrics with the same reeb vector field. By Chern-Levine-Nirenberg and the convergence of K\"ahler potentials, we find that the volume of unit ball of the smooth cone metric converges to $\frac{\omega_{2n}}{d_1\cdot\cdot\cdot d_n}$. By ~\cite[Section $3.1$]{[MSY]}, such volume depends only on the reeb vector field. So volume of unit ball for smooth K\"ahler cone metric is $\frac{\omega_{2n}}{d_1\cdot\cdot\cdot d_n}$. We also approximate a tangent cone of $M$ by smooth K\"ahler cone metrics. Then the volume of unit ball in tangent cone of $M$ is also $\frac{\omega_{2n}}{d_1\cdot\cdot\cdot d_n}$.

\medskip

Inequalities $(1)$, $(2)$ follow by the fact that $d_i\geq 1$ and the arithmetic-geometric inequality. 
Equality $(3)$ follows from straightforward dimension counting, using the fact that it corresponds to the number of solutions to $\sum d_ja_j\leq d$, where $a_j\geq 0$ are nonnegative integers.
The rigidity part also follows. In one extremal case, $d_1=d_2=\cdot\cdot\cdot=d_{n-1}=1$. By ~\cite[Theorem $0.3$]{[NT1]}, we find that $M$ splits off $\mathbb{C}^{n-1}$. 
In another case, $d_1=d_2=\cdot\cdot\cdot=d_n$. This is the case when $M$ is unitary symmetric \cite{[WZ]}. However, the metric is not necessarily unique: if the bisectional curvature is strictly positive, then we can do local perturbation, which does not affect the holomorphic spectrum. This completes the proof of Theorem \ref{thmsharpestimates}.

\medskip
\medskip

Proof of Corollary \ref{niquantitative}:

By the last part in the proof of Theorem \ref{thmniquestion}, we find that the universal cover $\tilde{M}$ has Euclidean volume growth. Assume $$\lim\limits_{r\to\infty}r^2\dashint_{B(p, r)}S<\delta<<1,$$ Then $$\lim\limits_{r\to\infty}r^2\dashint_{B(\tilde{p}, r)}S<C(n)\delta<<1.$$
Then by Theorem \ref{thmsharpestimates}, $$\lim\limits_{r\to\infty}\frac{vol(B(\tilde{p}, r))}{\omega_{2n}r^{2n}}\geq 1 - \Psi(\delta).$$ The opposite direction holds similarly. This completes the proof of Corollary \ref{niquantitative}.

\medskip
\medskip

Proof of Theorem \ref{bigvolume}:
Recall by ~\cite[Proposition $2.7$]{[LS1]}, the manifold is biholomorphic to $\mathbb{C}^n$. By adapting the argument in ~\cite[Proposition $6.1$]{[Lo]} and ~\cite[Theorem $1.5$]{[LS1]}, we can show that the tangent cone is also biholomorphic to $\mathbb{C}^n$. We can still apply the argument as in the proof of Theorem \ref{thmniquestion}. The difference is that, we do not have three circles lemma. So apriori, $d_1,...,d_n$ could vary for different tangent cones.
However, this does not affect the argument for a single tangent cone. If $\lim\limits_{r\to\infty}\inf r^2\dashint_{B(p, r)}S=0$, then for some tangent cone, the coordinate functions are of linear growth and the volume of unit ball is the same as the Euclidean case. Thus by Colding's volume continuity \cite{[Coldingvolume]}, $M$ is flat.
This completes the proof of Theorem \ref{bigvolume}.

\section {The case of nonnegative Ricci curvature}

Proof of Theorem \ref{ricflatmain1}:

Let us first assume $M$ satisfies condition $B$.
\begin{lemma}\label{highpowerbd}
Let $M^n$ satisfy condition $B$. Then there exists $C_1>0$ such that for all $1\leq k\leq n$, for all $r>0$, 
$$r^{2k-2n}\int_{B(p, r)}Ric^k\wedge \omega^{n-k}<C_1.$$ Furthermore, if we consider a tangent cone $(V, o)$ and a sequence of rescaled manifolds $(M_i, p_i) = (M, p, \frac{r}{r_i})\to (V, o)$ in the pointed Gromov-Hausdorff sense, then for $k<n$, $$\lim\limits_{i\to\infty}r_i^{2k-2n}\int_{B(p, r_i)}Ric^k\wedge \omega^{n-k}=\int_{B_V(o, 1)}Ric_V^k\wedge \omega_V^{n-k},$$ where $Ric_V$ is understood in the current sense.

\end{lemma}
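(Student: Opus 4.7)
The plan is to establish the uniform bound in two cases ($1\le k<n$ and $k=n$), and then prove the Gromov--Hausdorff continuity.

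For $1\le k<n$ the bound is essentially pointwise. Since $Ric\ge 0$ with $\operatorname{tr}_\omega Ric=S$, diagonalizing $Ric$ against $\omega$ at each point gives eigenvalues $0\le\lambda_j\le S$, hence
\[
Ric^k\wedge\omega^{n-k}=\binom{n}{k}^{-1}\!e_k(\lambda_1,\dots,\lambda_n)\,\omega^n\le C_{n,k}\, S^k\omega^n.
\]
Combined with $S\le C/d(\cdot,p)^2$ and Bishop--Gromov $\operatorname{vol}(B(p,t))\le C t^{2n}$ applied to dyadic shells $A_j=B(p,2^{j+1})\setminus B(p,2^j)$, this integrates to
\[
\int_{B(p,r)} Ric^k\wedge\omega^{n-k}\le C\sum_{2^j\le r} 2^{-2kj}\cdot 2^{2nj}\le C\, r^{2n-2k},
\]
yielding the desired scale-invariant bound $r^{2k-2n}\int<C_1$.

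For $k=n$ the direct pointwise estimate only produces $\int_{B(p,r)}Ric^n\lesssim \log r$, so the hypothesis $\omega=dd^c\rho$ becomes essential. The identity $\prod\lambda_j=e_{n-1}(\lambda)/\sum\lambda_j^{-1}$ together with the AM--HM inequality upgrades the pointwise bound to $Ric^n\le (S/n)\,\omega\wedge Ric^{n-1}$. I would first normalize $\rho$ to satisfy $\rho\le C d(\cdot,p)^2$ on $B(p,2r)$ (using plurisubharmonicity of $\rho$ together with $dd^c\rho=\omega$ and a mean-value argument), then integrate by parts via
\[
\int\varphi\, dd^c\rho\wedge Ric^{n-1}=\int\rho\, dd^c\varphi\wedge Ric^{n-1},
\]
which is legitimate since $Ric^{n-1}$ is a closed positive current. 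With a well-chosen smoothing of the radial weight $S\chi/d^2$, each dyadic shell now contributes at a geometric rate rather than as a constant, and the sum produces the uniform bound for $\int_{B(p,r)} Ric^n$.

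For the Gromov--Hausdorff continuity with $k<n$, the key observation is that the Ricci form is scale-invariant, so on $M_i=(M,p,d/r_i)$ one has $Ric_i\equiv Ric$ and $\omega_i=dd^c\rho_i$ with $\rho_i=\rho/r_i^2$. After passing to a subsequence, $\rho_i\to\rho_V$ in $L^1_{\mathrm{loc}}$ with $dd^c\rho_V=\omega_V$ (from Bedford--Taylor compactness together with Cheeger--Colding volume convergence), and the local log-determinants on $M_i$ give $Ric_i\rightharpoonup Ric_V$ as closed positive $(1,1)$-currents on $V$. For a smooth cutoff $\chi$ supported in $B(o,1)$, iterating the identity
\[
\int\chi\, Ric_i^k\wedge\omega_i^{n-k}=\int\rho_i\, dd^c\chi\wedge Ric_i^k\wedge\omega_i^{n-k-1}
\]
expresses each integral as a pairing that is continuous under the above convergence on the regular part of $V$, by Bedford--Taylor continuity of mixed Monge--Amp\`ere currents. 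The uniform bounds from the first step, applied on shrinking tubular neighborhoods of the singular set $\mathcal{S}\subset V$, certify that no mass concentrates on $\mathcal{S}$, so the limit equals $\int_{B_V(o,1)} Ric_V^k\wedge\omega_V^{n-k}$.

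The main obstacle is this last step, controlling mass near the (in general large) singular locus of the tangent cone, which is precisely why the limit statement is restricted to $k<n$: the available $\omega_i$-factors allow iterated integration by parts that degenerates when $k=n$. A secondary technical issue is the $k=n$ case of the uniform bound itself, where the absence of an $\omega$ factor forces the delicate integration-by-parts argument in the second paragraph above.
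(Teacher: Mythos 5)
Your treatment of the range $1\le k<n$ is correct and in fact more elementary than the paper's: the pointwise eigenvalue estimate $Ric^k\wedge\omega^{n-k}\le C_{n,k}S^k\omega^n$ together with $S\le C/d^2$ and Bishop--Gromov on dyadic shells does give $\int_{B(p,r)}Ric^k\wedge\omega^{n-k}\le Cr^{2n-2k}$, since for $k<n$ the geometric sum is dominated by its top term. The convergence statement for $k<n$ is also handled essentially as in the paper (potential convergence plus Chern--Levine--Nirenberg on the regular part, and the pointwise bound $Ric\le C\omega$ on the annulus to kill mass near the codimension-$4$ singular set), so that part is fine.

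The genuine gap is the case $k=n$. Your pointwise inequality $Ric^n\le\frac{S}{n}\,\omega\wedge Ric^{n-1}$ is correct, but it cannot be upgraded to a uniform bound by the integration by parts you propose. Writing $\int\varphi\,dd^c\rho\wedge Ric^{n-1}=\int\rho\,dd^c\varphi\wedge Ric^{n-1}$ with $\varphi$ a smoothing of $S\chi/d^2$ is circular: with $\rho\sim d^2$ and $|dd^c\varphi|\lesssim d^{-4}\omega$ (and condition $B$ gives no control whatsoever on $dd^cS$, so one cannot do better), the right-hand side is again of the form $\int d^{-2}\,\omega\wedge Ric^{n-1}$, which is exactly the quantity you started from. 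Each dyadic shell still contributes $O(1)$ (the $k=n-1$ bound gives $\int_{A_j}\omega\wedge Ric^{n-1}\le C\,2^{2j}$, with no decaying factor available), so this route is stuck at $O(\log r)$. The essential point is that to reduce the power of $Ric$ by integration by parts one needs a globally controlled potential for $Ric$ itself, not for $\omega$. The paper obtains this via Poincar\'e--Lelong: pulling back holomorphic functions $f_1,\dots,f_n$ from the tangent cone and setting $s_i=df_{i1}\wedge\cdots\wedge df_{in}$, one has $2\pi Ric_i\le dd^c\log|s_i|^2$ as currents (the divisor of $s_i$ contributes positively), whence
\begin{equation*}
2\pi\int\chi(u_i)\,Ric_i^{\,k}\wedge\omega_i^{\,n-k}\le\int\log|s_i|^2\,dd^c\chi(u_i)\wedge Ric_i^{\,k-1}\wedge\omega_i^{\,n-k}\le C\int_{B(p_i,2)\setminus B(p_i,1)}\bigl|\log|s_i|^2\bigr|\,\omega_i^{\,n},
\end{equation*}
and the last integral is bounded by projecting locally to $\mathbb{C}^n$ and using $L^1_{loc}$ compactness of plurisubharmonic functions. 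This is the idea your proposal is missing, and without it (or a substitute) the $k=n$ bound --- which is the case actually needed later in the proof of Theorem 6 --- is not established. A minor additional point: the restriction to $k<n$ in the convergence statement is not about degenerating integrations by parts near the singular set, but about possible concentration of $Ric^n$-mass at the cone vertex, since the small-ball contribution $O(r^{2n-2k})$ is only $o(1)$ when $k<n$.
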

\begin{proof}
Assume by contradiction that there exists a sequence $r_i\to\infty$ such that \begin{equation}\label{goingtoinfinity}r_i^{2k-2n}\int_{B(p, r_i)}Ric^k\wedge\omega^{n-k}\to\infty.\end{equation}
Set $(M_i, p_i) = (M, p, \frac{r}{r_i})$. By passing to a subsequence, we may assume $(M_i, p_i)\to (V, o)$. According to \cite{[DS]} and page $1161$ of \cite{[Sz]}, there exists $u_i$ on $B(p_i, 10)$ such that \begin{equation}\label{equationforui}dd^c u_i = \omega_i,|u_i - d^2(p_i, \cdot)|<\Psi(\frac{1}{i}),|u_i|+|\nabla u_i|\leq C.\end{equation} We can find holomorphic functions $f_1, ..., f_n$ on $V$ so that $df_1\wedge\cdot\cdot\cdot\wedge df_n$ is not identically zero. Note $df_1\wedge\cdot\cdot\cdot\wedge df_n$ could have divisor. By solving $\overline\partial$ equation, $f_j$ can be lifted from tangent cone to rescaled manifolds $B(p_i, 10)$, say $f_{ij}\to f_j$. Let $$s_i = df_{i1}\wedge\cdot\cdot\cdot\wedge df_{in}.$$
Let $\chi$ be a nonnegative smooth function on $\mathbb{R}^+$ so that $\chi = 1$ on $[0, 1]$, with compact support in $[0, 1.5)$. By Poincar\'{e}-Lelong, we have  \[ \begin{aligned}2\pi\int \chi(u_i) Ric_i^k\wedge\omega_i^{n-k}&\leq \int\chi(u_i) dd^c\log |s_i|^2\wedge Ric_i^{k-1}\wedge\omega_i^{n-k}\\& = \int \log |s_i|^2 dd^c\chi(u_i)\wedge Ric_i^{k-1}\wedge\omega_i^{n-k}\\& \leq C\int_{B(p_i, 2)\backslash B(p_i, 1)} |\log |s_i|^2| \omega_i^{n},\end{aligned}\] using $Ric\leq C\omega$ on $B(p_i, 2)\backslash B(p_i, 1)$.
Then it suffices to find uniform bound of $\int \log |s_i|^2\omega_i^n$ on an annulus. Absolute value is not needed, as $\log |s_i|^2$ is uniformly bounded from above. Without loss of generality, we may assume $\log |s_i|^2 \leq 0$. Since there may be singularity on cross section of $V$, we consider projection. According to the local parametrization theorem in \cite{[De2]} (Theorem $4.19$ on page $95$), for any point $q\in\partial B(o, 1)$, we can locally properly project $B(q, \delta)$ to a domain in $\mathbb{C}^n$. Say the projection is given by $P$. Then under the Gromov-Hausdorff convergence, we can also project $B(q_i, \delta)$, where $q_i\to q$, say the projection is $P_i$ and $P_i\to P$. Recall $\omega_i = dd^c u_i$. Assume $P_i(q_i) = 0, P(q) = 0$ and there exists $\delta_1>0$ with $P_i^{-1}(B(0, \delta_1))\subset\subset B(q_i, 0.5\delta)$. For any $y\in B(0, \delta_1)$, define $$h_i(y) = \sum\limits_{x\in P_i^{-1}(y)}\log |s_i(x)|^2\leq 0, v_i(y) = \sum\limits_{x\in P_i^{-1}(y)}u_i(x).$$ Then $h_i, v_i$ are psh and $v_i$ is uniformly bounded. We may assume $P_i(B(q_i, \delta_2))\subset B(0,0.5\delta_1)$. 
Then $$\int_{B(q_i, \delta_2)} -\log |s_i|^2\omega_i^n\leq -\int_{B(0, 0.5\delta_1)} h_i(dd^cv_i)^n\leq C.$$ In the last step, we used Proposition $3.11$ on page $158$ of \cite{[De2]} and that $h_i$ has a convergent subsequence as psh function, hence the $L^1$ norm (w.r.t Euclidean volume) is uniformly bounded. Then by a standard covering argument, we find a contradiction to (\ref{goingtoinfinity}).

Now we come to the convergence of integral for $k<n$. As we proved that $$\int_{B(p, r)}Ric^k\wedge\omega^{n-k}\leq Cr^{2n-2k},$$ it sufficies to prove the convergence on an annulus (the integral on the small ball is controlled by $Cr^{2n-2k}$). Away from singular set, as we have $C^{1, \alpha}$ convergence \cite{[A1]}, the potentials of $Ric_i$ and $\omega_i$ converge. Then the proof follows from Chern-Levine-Nirenberg. Near the singular set, as $Ric\leq C\omega$, then the integral is bounded by a constant times the volume of singular set. So the contribution is negligible. This completes the proof of Lemma \ref{highpowerbd}.
\end{proof}

Consider one tangent cone $(W, o')$ which is Ricci flat. Let us consider a sequence $r_i\to\infty$, such that $(M_i, p_i) = (M, p, \frac{r}{r_i})$ converges in the pointed-Gromov-Hausdorff sense to $(W, o')$. Let $s$ be the parallel pluri-anticanonical form on $W$ (dual of the pluri-canonical form), say $s\in H^0(W_{reg}, K^{-m})$. Let $\delta_0>0$ be a small number. Let $\Sigma'$ be the metric singular set of $W$. We can lift $\Sigma'$ to $B(p_i, 9)$ by Gromov-Hausdorff approximation. Say the corresponding set is $\Sigma_i'$. By pulling $s$ back to $B(p_i, 9)$, we obtain smooth sections $s''_i\to s$ uniformly on each compact set of $B(p_i, 9)\backslash \Sigma_i'$. Set $S_i = s_i''\eta_{i1}\eta_{i2}$, where $\eta_{i1}$ vanishes on $B(p_i, 2\delta_0)$ and equals to $1$ outside $B(p_i, 3\delta_0)$, $\eta_{i2}$ vanishes on $B(\Sigma_i', \delta_0)$, and equals to $1$ outside $B(\Sigma_i', 2\delta_0)$. Let $u_i$ be defined as in (\ref{equationforui}). Let $\Omega_i$ be the connected component of  $u_i^{-1}([0, 5))$ containing $B(p_i, 1)$. Then $$ B(p_i, 3)\supseteq\Omega_i\supseteq B(p_i, 2).$$ Since $n\geq 2$ and $\Sigma'$ has real codimension at least $4$ by Cheeger-Naber \cite{[CN]},  we obtain $S_i$ satisfies $$\int_{\Omega_i} |\overline\partial S_i|^2e^{-2u_i}\leq\Psi(\delta_0, \frac{1}{i}).$$  Note here we crucially used that $s$ is parallel, hence the norm is constant. Let $(g')^i_m$ be the metric on $K_{M_i}^{-m}$ induced by the K\"ahler metric. As $Ric\geq 0$, $(g')^i_m$ has nonnegative curvature. Let the new metric be $g^i_m=e^{-u_i}(g')^i_m$. 
 We solve $\overline\partial \hat{s}_i= \overline\partial S_i$, by using $(g')^i_m$. H\"ormander $L^2$ estimate of $\overline\partial$ gives that 
$$\int_{\Omega_i}|\hat{s}_i|^2e^{-2u_i}\leq 10\int_{\Omega_i} |\overline\partial S_i|^2e^{-2u_i}\leq\Psi(\delta_0, \frac{1}{i}).$$ 
Let $s_i = S_i-\hat{s}_i$. Then $s_i\in H^0(\Omega_i, K^{-m})$ and \begin{equation}\label{l2bound}\int_{\Omega_i} |s_i|^2 < C.\end{equation} Recall the scalar curvature has quadratic decay. Hence, on $B(p_i, 2)\backslash B(p_i, 1)$, we have:  $$\frac{1}{2\pi} dd^c \log |s_i|^2 \geq -mRic_i - \omega_i \geq -C\omega_i.$$ Therefore $$\Delta \log (e^{Cu_i}|s_i|^2)\geq 0.$$ This implies that $$\Delta (e^{Cu_i}|s_i|)\geq 0).$$ 
Then (\ref{l2bound}) and mean inequality imply the pointwise upper bound for $|s_i|$ on $B(p_i, 2)\backslash B(p_i, 1)$. Let $\delta_0\to 0$ slowly as $i\to\infty$, we obtain that $s_i$ converges uniformly on each compact set of the regular part to $s$. We claim $s_i$ has no zero point on $B(p_i, 2)\backslash B(p_i, 1)$, for large $i$. Assume not. It is clear the zero set is a divisor. 
One can apply the Poincar\'{e}-Lelong equation and projection as in Lemma \ref{highpowerbd}, to conclude that the projection of the divisor to $\mathbb{C}^n$ has uniform volume (w.r.t Euclidean measure) upper bound. Hence, there exists a subsequence so that the divisor converges to a divisor (as current). However, the projection of the singular set has codimension $4$, this means there exists a point on the limit divisor away from the metric singular set. Hence for large $i$, there exists a point on divisor and uniformly away from singular set. This contradicts that $s_i\to s$ on regular part. Once such section has no zero point, we can find the dual of $s_i$ in pluri-canonical bundle, say $\mu_i$, which is holomorphic. Then by multiplying the metric by $e^{Cu_i}$, we may assume $\log |\mu_i|^2$ is psh. We claim that $\log |\mu_i|^2$ is uniformly bounded from above. Assume not, there exists a sequence of points $B(p_i, 2)\backslash B(p_i, 1)\ni q_i\to q$ so that $\log |\mu_i(q_i)|^2\to \infty$. Again we can apply local parametrization theorem to project via $P_i$ to a ball on $\mathbb{C}^n$, and define $\nu_i(y) =\sum\limits_{x\in P_i^{-1}(y)}\log |\mu_i(x)|^2$. Since $|s_i|^2$ has uniform upper bound, $\log |\mu_i|^2$ has uniform lower bound. Hence $\nu_i(P_i(q_i))\to \infty$. Note the metric singular set $\Sigma'$ is a proper analytic set. Now pick a generic complex line $l$ through $P(q)$ ($P$ is the limit of $P_i$), then locally $l$ intersects $P(\Sigma')$ at only finitely many points. In particular, a small (fixed) circle on $l$ around $P(q)$ has no intersection with $P(\Sigma')$. Now we can shift the complex line $l$ slightly so that it passes through $P_i(q_i)$. By maximum principle for psh functions, the supremum of $\nu_i$ on that circle is at least $\nu_i(P_i(q_i))$. But that small circle has fixed distance to $P(\Sigma')$. This is a contradiction.
Thus there exists a constant $C$ (independent of $i$) such that on $B(p_i, 2)\backslash B(p_i, 1)$, $$|\Delta \log |s_i|^2|\leq C, |\log |s_i|^2|<C.$$ We claim that $|\nabla \log |s_i|^2|$ is uniformly bounded. Let $G_R(x, y)$ be Green's function with boundary value $0$ on a geodesic ball in $B(p_i, 2)\backslash B(p_i, 1)$. Then $$\log |s_i(x)|^2 + \int G_R(x, y)\Delta \log |s_i(y)|^2dy$$ is harmonic and bounded in the interior. Hence the gradient is bounded. We can take gradient for each term. By the estimate for $|\nabla G_R(x, y)|$, we find that $|\nabla \log |s_i(x)|^2|$ is uniformly bounded. Arzela-Ascoli theorem implies $|s_i|$ on $B(p_i, 2)\backslash B(p_i, 1)$ converges uniformly to constant $s$ on $B(o', 2)\backslash B(o', 1)$. Since $\omega$ is $dd^c$-exact, $M$ has no compact subvariety of positive dimension. Hence, $|s_i|^2$ is nonvanishing anywhere. Then \begin{equation}\label{ricequality}Ric_{M_i} = dd^c\log |s_i|^{-\frac{2}{m}}.\end{equation}

Certainly such argument works, if $B(p_i, 10)$ is Gromov-Hausdorff close to $B(o, 10)$. Therefore, if a tangent cone $V$ sufficiently close to $W$, we can consider a sequence $r'_i\to\infty$ so that $(M'_i, p'_i) = (M, \frac{r}{r'_i}, p)$ converges in the pointed-Gromov-Hausdorff sense to $(V, o)$. We may assume that $r'_i\geq 10r_i$ for all $i$. Let $\Omega'_i, u'_i$ be defined similarly as $\Omega_i, u_i$. Then we find $s'_i$ converging to $s_V\in H^0(V_{reg}, K^{-m})$. Also $s_V$ is nowhere vanishing. As $r'_i\geq 10r_i$, by naturally identifying $M'_i$ and $M_i$ with $M$, we find $\overline{\Omega_i}\subset B(p, r'_i)$. 
Then we have $$\frac{1}{2\pi}dd^c\log |s'_i|^{-\frac{2}{m}} = Ric_{M'_i}=Ric_M.$$  Therefore, on $B(p, r_i)$, \begin{equation}\label{ddcsame}dd^c \log |s'_i|^2 = dd^c\log |s_i|^2.\end{equation}
Then by Lemma \ref{highpowerbd} (take $k=n$), we find that \begin{equation}\label{limitsame}\int_{B(p'_i, 2)}(dd^c\log |s'_i|^{-\frac{2}{m}})^n\to \int_{M} Ric^n, \int_{B(p_i, 2)}(dd^c\log |s_i|^{-\frac{2}{m}})^n\to \int_{M} Ric^n.\end{equation}

Let $\chi_i=h(u'_i)$, where $h$ is a smooth function on $[0, 2)$, $h\equiv 1$ on $[0, 1]$, and $h$ has compact support. Then \begin{equation}\label{l2conv3}\int \chi_i Ric_i^n=\int_{B(p'_i, 2)\backslash B(p'_i, 1)} \log |s'_i|^{-\frac{2}{m}}dd^c\chi_i \wedge (dd^c\log |s'_i|^{-\frac{2}{m}})^{n-1}.\end{equation} Note on $B(p'_i, 2)\backslash B(p'_i, 1)$, $|dd^c\chi_i|\leq C\omega'_i$, $Ric_{M'_i}\leq C\omega'_i$, also $|\log |s_i|^2|$ is uniformly bounded. Assume $\chi_i \to \chi$. Recall $s'_i\to s_V\in H^0(V_{reg}, K^{-m})$. We assert that \begin{equation}\label{frommtov}\begin{aligned}\lim\limits_{i\to\infty}\int \log |s'_i|^{-\frac{2}{m}}dd^c\chi_i \wedge (dd^c\log |s'_i|^{-\frac{2}{m}})^{n-1}&= \int \log |s_V|^{-\frac{2}{m}}dd^c \chi \wedge (dd^c\log |s_V|^{-\frac{2}{m}})^{n-1}\\&=\int_V\chi (dd^c\log |s_V|^{-\frac{2}{m}})^n.\end{aligned}\end{equation}
Certainly the potential convergence holds on each compact set of regular parts, hence the integral converges by Chern-Levine-Nirenberg. In a small neighborhood of singular part, note the integrand is bounded by $C\omega_{M'_i}^n$, hence the contribution is negligible. This proves (\ref{frommtov}). We can also apply (\ref{frommtov}) to the sequence $s_i\to s$. Putting (\ref{limitsame}), (\ref{l2conv3}) and (\ref{frommtov}) together,
 we find $$\int_{V}\chi (dd^c\log |s_V|^{-\frac{2}{m}})^n = 0.$$ Let $\Omega\in H^0(V_{reg}, K^m)$ be the dual of $s_V$, which is nowhere vanishing. Then $\log |\Omega|^2$ is psh and \begin{equation}\label{integralvanish}\int_{B(o, 2)}\chi(dd^c\log|\Omega|^2)^n = 0.\end{equation}

As in \cite{[MSY]} and Lemma $6.1$ in \cite{[CS]}, by multiplying by a nowhere vanishing holomorphic function, we may assume $\Omega$ is homogeneous with respect to $r\frac{\partial}{\partial r}$. We remark that $\log |\Omega|$ can not be strictly decreasing along $r\frac{\partial}{\partial r}$. Otherwise, for some $c>0$, we may assume $\log |\Omega| + c\log r$ is homogeneous and the norm is constant along $r\frac{\partial}{\partial r}$. Such function is psh, and by maximum principle, it is constant. Hence, the psh function $\log |\Omega| = c_1-c\log r$. This is not possible when $n\geq 2$. 

Let $u = \log |\Omega|^{\frac{2}{m}}$, $v = c\log r^2$, where $c$ is chosen so that they have the same degree along $r\frac{\partial}{\partial r}$. Let $A= B(o, 2)\backslash B(o, 0.1)$.  Then on $A_{reg}$, \begin{equation}\label{uvbounds}0\leq dd^c u \leq C\omega, 0\leq dd^c v\leq C\omega, |u|+|v|\leq C, |du|+|dv|\leq C.\end{equation}

\begin{lemma}\label{sameintegral}
\begin{equation}\label{integralsamesingular}\int_{A_{reg}}u dd^c\chi\wedge (dd^c u)^{n-1} = \int_{A_{reg}}v dd^c\chi\wedge (dd^c v)^{n-1}.\end{equation}
\end{lemma}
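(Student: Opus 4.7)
The plan is to integrate by parts on both sides so each becomes the total Bedford-Taylor mass $\int_{B(o,2)_{reg}} \chi\,(dd^c w)^n$ for $w\in\{u,v\}$, and then to compare these two masses using the common homogeneity of $u$ and $v$.

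Applying Stokes on $A_{reg}$ with $\alpha = w\,d^c\chi\wedge (dd^c w)^{n-1} - \chi\,d^c w\wedge(dd^c w)^{n-1}$, and using that $\chi$ and $d^c\chi$ both vanish near $\{r=2\}$ while $\chi\equiv 1$ with $d^c\chi=0$ near $\{r=0.1\}$, the only surviving boundary term is $\int_{\{r=0.1\}} d^c w\wedge (dd^c w)^{n-1}$. A second Stokes on $B(o,0.1)_{reg}$ identifies this with $\int_{B(o,0.1)_{reg}}(dd^c w)^n$; since $\chi \equiv 1$ on $B(o,0.1)$, this yields
\begin{equation*}
\int_{A_{reg}} w\,dd^c\chi\wedge(dd^c w)^{n-1} \;=\; \int_{B(o,2)_{reg}} \chi\,(dd^c w)^n =: M_w.
\end{equation*}
The bounds (\ref{uvbounds}) on $w$ and $dw$, combined with the codimension-$\geq 4$ of the metric singular set $\Sigma'$ (Cheeger-Naber), justify a standard cutoff argument around $\Sigma'$ with vanishing contribution. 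Next, the homogeneity $(r\partial/\partial r)w = \lambda$ implies $F_\alpha^*(dd^c w)= dd^c w$ for every dilation $F_\alpha$, and Lemma~\ref{highpowerbd} provides a finite bound for $\int_{B_V(o,1)}(dd^c w)^n$; hence $(dd^c w)^n$ is a scale-invariant locally finite nonnegative measure on $V_{reg}$, concentrated at the vertex, so $(dd^c w)^n = M_w\,\delta_o$. The lemma therefore reduces to $M_u = M_v$.

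Writing $u = \lambda\log r + g(\theta)$ and $v = \lambda\log r$, the difference $g = u-v$ is constant along radial rays, Reeb-invariant (since $|\Omega|$ is), and bounded on $\Sigma_{reg}$ by (\ref{uvbounds}). The Lelong numbers of $u$ and $v$ at $o$ therefore coincide, and Demailly's comparison principle for psh functions differing by a bounded function gives $M_u = M_v$. Alternatively, one interpolates $w_t = v + tg$ (each psh of degree $\lambda$) and shows $\partial_t M_{w_t}=0$ via a double integration by parts, using that $dd^c g$ is a transverse closed current while $dd^c\chi$ is radial, so the resulting integral vanishes by Fubini in polar coordinates on the cone. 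The main obstacle is the metric singular set $\Sigma'$: its codimension being $\geq 4$, combined with the $C^0$ and $C^1$ bounds in (\ref{uvbounds}), is precisely what is needed for all the Bedford-Taylor computations to extend across $\Sigma'$ via cutoffs, and similarly for the Lelong-mass comparison at the vertex of the (possibly singular) K\"ahler cone.
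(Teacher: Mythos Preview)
Your reduction of the lemma to the equality of Monge--Amp\`ere masses $M_u=M_v$ at the vertex is exactly how the paper uses the lemma in the subsequent step, so the overall plan is sound. The gap is in how you justify $M_u=M_v$. Approach (a) is not available as stated: Demailly's comparison of Lelong numbers (or of Monge--Amp\`ere point masses) for psh functions differing by a bounded term is a statement on complex \emph{manifolds}, whereas the vertex $o$ of $V$ is in general complex-analytically singular (the cross section need not be smooth), and $w$ is moreover unbounded there, so neither the Bedford--Taylor product nor the Lelong comparison applies at $o$ without substantial extra work. Also, Lemma~\ref{highpowerbd} only gives convergence on $V$ for $k<n$; the finiteness of $\int_{B_V(o,1)}(dd^c w)^n$ that you invoke does not come from that lemma.

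Approach (b) is morally correct, but the phrase ``Fubini in polar coordinates on the cone'' does not prove the required vanishing. After differentiating and integrating by parts, $\partial_t M_{w_t}$ becomes (a sum of terms of the form) $\int_{A_{reg}} dg\wedge d^c\chi\wedge (dd^c u)^j\wedge(dd^c v)^{n-1-j}$ with $g=u-v$; the point is that this integrand vanishes \emph{pointwise}. The paper establishes this by choosing, at any regular point, a holomorphic chart $(z_1,\dots,z_n)$ with $\partial_{z_1}=r\partial_r-\sqrt{-1}\,Jr\partial_r$: then $u_{1\bar s}=v_{1\bar s}=0$ for all $s$, so $(dd^c u)^j\wedge(dd^c v)^{n-1-j}$ is a multiple of $\bigwedge_{s\ge 2}dz_s\wedge d\bar z_s$, while $g_1=g_{\bar 1}=0$ forces $dg$ to lie in the same transverse span, and the wedge is zero. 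Note that $\chi$ is a function of $r^2$, which is \emph{not} a function of $z_1$ alone, so $dd^c\chi$ has mixed components and a naive radial--transverse Fubini does not suffice; the vanishing really uses that $dg$ and the $(n-1,n-1)$ block both avoid the $z_1$ direction. The paper then iterates this swap together with a symmetric integration by parts $\int v\,dd^c\chi\wedge dd^c u\wedge\cdots=\int u\,dd^c\chi\wedge dd^c v\wedge\cdots$, all on $A_{reg}$ with cutoffs only around the codimension-$4$ set $\Sigma'$, thereby never confronting the vertex at all. Your interpolation argument becomes a proof once this pointwise vanishing is supplied; without it there is a genuine gap.
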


\begin{proof}
 Let $\eta$ (depending on $\epsilon$) be a smooth cut-off function which is supported on $A\backslash B_\epsilon(\Sigma)$, where $\Sigma$ is the metric singular set, $0\leq \eta\leq 1$. Also assume $\eta\equiv 1$ on $A\backslash B_{2\epsilon}(\Sigma)$, $|d\eta|\leq C/\epsilon$. According to (\ref{uvbounds}), $$\lim\limits_{\epsilon\to 0}\int_{A}\eta u dd^c\chi\wedge (dd^c u)^{n-1}= \int_{A_{reg}}u dd^c\chi\wedge (dd^c u)^{n-1}.$$ We have 
 \begin{equation}\label{uminusvterm}\begin{aligned}&\int_{A}\eta u dd^c\chi\wedge (dd^c u)^{n-1} - \int_{A}\eta v dd^c\chi\wedge (dd^c u)^{n-1}\\& = \int_{A}\eta d(v-u)\wedge d^c\chi\wedge (dd^c u)^{n-1}+ \int_{A} (v-u)d\eta\wedge d^c\chi\wedge (dd^c u)^{n-1}.\end{aligned}\end{equation}

For any $q\in A_{reg}$, choose a holomorphic chart containing $q$ so that $\frac{\partial}{\partial z_1} = r\frac{\partial}{\partial r} - \sqrt{-1}Jr\frac{\partial}{\partial r}$. Now in such coordinate, if we set $x_1=Re z_1$, then $$u = kx_1 + f_1(z_2, .., z_n), v = kx_1 + f_2(z_2, .., z_n), v-u=g(z_2, .., z_n).$$
Now assume in addition that $u, v$ are smooth. Under the local coordinate above, for all $s$, we have $u_{1\overline{s}} = v_{s\overline{1}} = 0$, also $(v-u)_1=(v-u)_{\overline{1}} = 0$, hence $$d(v-u)\wedge d^c\chi\wedge (dd^c u)^{k}\wedge (dd^c v)^{n-k-1}\equiv 0.$$ The nonsmoothness of $u, v$ can be handled by equivariant local smooth approximation on the chart above. 
Hence the first term in the second line of (\ref{uminusvterm}) vanishes identically. The second term approaches zero as $\epsilon\to 0$, since we have (\ref{uvbounds}) and that the singular set has real codimension $4$.
We claim that $$\int_{A_{reg}}v dd^c\chi\wedge (dd^c u)^k\wedge (dd^c v)^{n-1-k} = \int_{A_{reg}}u dd^c\chi\wedge (dd^c u)^{k-1}\wedge (dd^c v)^{n-k}.$$ This directly follow from integration by parts, if $A$ is smooth. In the general case, \[ \begin{aligned}&\int_{A_{reg}}v dd^c\chi\wedge (dd^c u)^k\wedge (dd^c v)^{n-1-k} \\&= \lim\limits_{\epsilon\to 0}\int_{A_{reg}}\eta v dd^c\chi\wedge (dd^c u)^k\wedge (dd^c v)^{n-1-k} 
\\&= -\lim\limits_{\epsilon\to 0}\int_{A_{reg}}d(\eta v)\wedge dd^c\chi\wedge d^cu\wedge (dd^c u)^{k-1}\wedge (dd^c v)^{n-1-k}\\&=-\lim\limits_{\epsilon\to 0}\int_{A_{reg}}(\eta dv + vd\eta)\wedge dd^c\chi\wedge d^cu\wedge (dd^c u)^{k-1}\wedge (dd^c v)^{n-1-k}
\\&= -\lim\limits_{\epsilon\to 0}\int_{A_{reg}}\eta dv\wedge dd^c\chi\wedge d^cu\wedge (dd^c u)^{k-1}\wedge (dd^c v)^{n-1-k}\\&=-\lim\limits_{\epsilon\to 0}\int_{A_{reg}}ud^c(\eta dv)\wedge dd^c\chi\wedge (dd^c u)^{k-1}\wedge (dd^c v)^{n-1-k}\\&=\int_{A_{reg}}u dd^c\chi\wedge (dd^c u)^{k-1}\wedge (dd^c v)^{n-k}.\end{aligned}\] Therefore, we have \[ \begin{aligned}&\int_{A_{reg}}u dd^c\chi\wedge (dd^c u)^{n-1} \\&= \int_{A_{reg}}v dd^c\chi\wedge (dd^c u)^{n-1} \\&= \int_{A_{reg}}u dd^c\chi\wedge (dd^c u)^{n-2}\wedge dd^c v \\&= \int_{A_{reg}}v dd^c\chi\wedge (dd^c u)^{n-2}\wedge dd^c v\\&=\cdot\cdot\cdot=\int_{A_{reg}}v dd^c\chi\wedge (dd^c v)^{n-1}.\end{aligned}\] 

\end{proof}

\begin{lemma}
$\log |\Omega|$ is constant along $r\frac{\partial}{\partial r}$.
\end{lemma}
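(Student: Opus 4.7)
The plan is to argue by contradiction. If $\log|\Omega|$ is not constant along $r\frac{\partial}{\partial r}$, then by the preceding remark it must be strictly increasing along $r\frac{\partial}{\partial r}$, so the choice of $c$ forces $c>0$. I combine (\ref{integralvanish}) with Lemma \ref{sameintegral} to show $\int_{B(o,2)}\chi(dd^c v)^n=0$, and then reach a contradiction via a Lelong-mass computation at the apex $o$.

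In detail, rewriting (\ref{integralvanish}) in terms of $u=\log|\Omega|^{2/m}$ gives $\int_{B(o,2)}\chi(dd^c u)^n=0$. Since $dd^c\chi$ is supported in a compact annulus lying inside $A_{reg}$ on which $u$ is bounded by (\ref{uvbounds}), and since the metric singular set $\Sigma$ has real codimension at least $4$, Bedford--Taylor integration by parts yields
\[
\int_{A_{reg}} u\,dd^c\chi\wedge(dd^c u)^{n-1} = \int_{B(o,2)}\chi(dd^c u)^n = 0.
\]
By Lemma \ref{sameintegral}, the same integral with $u$ replaced by $v$ also vanishes; reversing the Bedford--Taylor integration by parts for $v$ (valid since $v=c\log r^2$ is locally bounded on $A_{reg}$ and the identity passes to the limit under the truncations $\max(v,-N)$) then produces $\int_{B(o,2)}\chi(dd^c v)^n=0$.

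To reach a contradiction, observe that $v=2c\log r$ is purely radial, so the $(1,0)$ radial direction $r\frac{\partial}{\partial r}-\sqrt{-1}Jr\frac{\partial}{\partial r}$ lies in the kernel of $dd^c v$ on $V\setminus(\Sigma\cup\{o\})$. Hence $dd^c v$ has rank at most $n-1$ there, so $(dd^c v)^n \equiv 0$ off of $\Sigma\cup\{o\}$. The codimension-$4$ bound on $\Sigma$ makes its contribution to the Monge--Amp\`ere current vanish as well, so $(dd^c v)^n$ is concentrated at the apex $o$. Using the local parametrization theorem (as in the proof of Lemma \ref{highpowerbd}) to project a neighborhood of $o$ properly onto a domain in $\mathbb{C}^n$, and comparing with the standard Euclidean identity $(dd^c\log|z|^2)^n=(2\pi)^n\delta_0$, one finds $(dd^c v)^n(\{o\}) = (2c)^n\kappa$ for some $\kappa>0$ depending only on $V$. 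Since $\chi(o)=1$, this gives $\int_{B(o,2)}\chi(dd^c v)^n = (2c)^n\kappa > 0$ when $c>0$, contradicting the vanishing above.

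Hence $c=0$, and by the construction of $c$, $u$ (equivalently $\log|\Omega|$) has degree zero along $r\frac{\partial}{\partial r}$, i.e., is constant along $r\frac{\partial}{\partial r}$. The main technical obstacle is justifying the Bedford--Taylor integration by parts across the apex $o$ and the metric singular set $\Sigma$, together with the strict positivity of the Lelong mass of $v$ at $o$ on a possibly singular K\"ahler cone; both are handled by Bedford--Taylor truncation via $\max(v,-N)$, the codimension-$4$ bound on $\Sigma$, and the projection technique to $\mathbb{C}^n$ that has appeared repeatedly in this paper.
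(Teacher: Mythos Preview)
Your contradiction strategy and use of Lemma \ref{sameintegral} match the paper exactly: both reduce to showing $\int \chi(dd^c v)^n>0$ with $v=c\log r^2$, $c>0$. The difference is only in how this positivity is established. The paper invokes Demailly's formula $5.5$ (page 158 of \cite{[De2]}) for semi-exhaustive psh weights, which gives directly
\[
\int_{B(o,1)}(dd^c\log r^2)^n=\int_{B(o,1)}(dd^c r^2)^n=\mathrm{vol}(B(o,1))>0,
\]
so the mass is identified with the cone volume in one stroke.

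Your route via the Lelong mass at the apex is morally correct---on the regular locus away from $o$, the complex radial direction is in the kernel of $dd^c\log r^2$, so $(dd^c v)^n$ is indeed concentrated at $o$---but the justification you give for strict positivity is not adequate. The local parametrization projection $P:V\to\mathbb{C}^n$ used in Lemma \ref{highpowerbd} does \emph{not} send the intrinsic $\log r^2$ on $V$ to $\log|z|^2$ on $\mathbb{C}^n$, so the Euclidean identity $(dd^c\log|z|^2)^n=(2\pi)^n\delta_0$ cannot be applied directly through $P$; pushforward of currents by a finite map does not commute with the Monge--Amp\`ere operator. What would work is Demailly's comparison theorem for generalized Lelong numbers: using the affine embedding $V\hookrightarrow\mathbb{C}^N$ one has $\log r^2=\log(|z|^2|_V)+O(1)$ near $o$ (by the metric comparison built into the Donaldson--Sun picture), so the Lelong number of $\log r^2$ at $o$ coincides with that of $\log|z|^2|_V$, which is the multiplicity of $V$ at $o$ and hence positive. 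Either this, or the paper's one-line appeal to Demailly's formula, closes the argument; as written, your projection step is a gap.
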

\begin{proof}
Assume $\log |\Omega|$ is not constant along $r\frac{\partial}{\partial r}$. Choose $c>0$ such that $\log|\Omega|^2- c\log r^2$ is constant along $r\frac{\partial}{\partial r}$. By Lemma \ref{sameintegral} and integration by parts, we find $$\int_{B(o, 1)}(dd^c\log|\Omega|^2)^n =c^n\int_{B(o, 1)}(dd^c\log r^2)^n.$$ 
According to formula $5.5$ (page 158) of \cite{[De2]}, 
$$\int_{B(o, 1)}(dd^c\log r^2)^n = \int_{B(o, 1)}(dd^c r^2)^n = vol(B(o, 1))>0.$$
Therefore, $$\int_{B(o, 1)}(dd^c\log|\Omega|^2)^n = c^nvol(B(o, 1))>0.$$
This contradicts (\ref{integralvanish}).

\end{proof}

 As $\log|\Omega|$ is psh, by maximum principle, $|\Omega|$ is constant. This means $V$ is Ricci flat in the current sense. By the regularity theory of complex Monge-Amp\`ere equations, we see that $V$ is Ricci flat in the differential geometric sense, and the metric is $C^\infty$ on regular part. Also $\Omega$ is a parallel pluri-canonical section: by parallel transport, we obtain a pluri-canonical form $\gamma$ locally, then $\frac{\gamma}{\Omega}$ is holomorphic with constant norm, hence $\frac{\gamma}{\Omega}$ is constant. This proves the openness of Ricci flat tangent cones with a parallel pluri-canonical section. Closedness follows from convergence theory by Anderson \cite{[A1]} and Cheeger-Naber \cite{[CN]}. By connectness of tangent cones, all tangent cones are Ricci flat.

Ricci flatness of all tangent cones imply that the holomorphic spectrum is rigid \cite{[MSY]}\cite{[DS]}. Then by \cite{[DS]}, $M$ is identified with an affine algebraic variety $X$ in $\mathbb{C}^N$ (we only need that the holomorphic spectrums are the same). Let $(z_1, ..., z_N)$ be the holomorphic coordinate on $\mathbb{C}^N$.

 Let $\hat{X}$ be the compactification of $X\subset \mathbb{C}^N\subset\mathbb{P}^N$. For $N$ large, we can find a sequence of automorphism $\tau_i$ of $\mathbb{P}^N$ and a sequence $r_i\to \infty$ such that if we define $(M_i, p_i) = (M, p, \frac{r}{r_i})$, then $\tau_i((M_i, p_i))$ converges in both intrinsic (the pointed-Gromov-Hausdorff sense) and extrinsic (current) sense to the tangent cone $(V, o)$. Also, there exists $c>0$ such that $$B_{V}(o, c)\subset B_{\mathbb{C}^N}(0, 1)\cap V\subset B_{\mathbb{C}^N}(0, 2)\cap V \subset B_{V}(0, \frac{1}{c}).$$  Therefore, for large $i$, 
$$B_{M_i}(p_i, 0.5c)\subset B_{\mathbb{C}^N}(0, 1)\cap \tau_i(M_i)\subset B_{\mathbb{C}^N}(0, 2)\cap \tau_i(M_i) \subset B_{M_i}(p_i, \frac{2}{c}).$$ Let us fix such $c$. Let the Fubini-Study metric on $\mathbb{P}^N$ be given by $$\omega_{FS}=dd^c\log (1+|z_1|^2+\cdot\cdot\cdot+|z_N|^2).$$
Assume $M$ is not Ricci flat. Then there exists $q \in M$ and $e\in T_qM$ such that $Ric(e, e)>0$. Consider an algebraic curve $L\subset X$, through $q$ with tangent proportional to $e$. Let $\hat{L}$ be the compactified curve. Then it has finite degree. Under the sequence of automorphism of $\mathbb{P}^N$, the degree is fixed. Hence, the volume w.r.t $\omega_{FS}$ is uniformly bounded. 

Since we have a nowhere zero section of $K_V^m$ with constant norm $1$, we can lift such the pluri-anticanonical dual section by $L^2$ method.
Say the section is $s_i$. Then $s_i$ is nowhere vanishing. Also, on the annulus $B(p_i,\frac{2}{c})\backslash B(p_i,0.5c)$, $|s_i|^2$ is converging uniformly to $1$. Thus by scale invariance of $Ric$, on $L_i=\tau_i(L)$, $$\frac{1}{m}dd^c (-\log |s_i|^2) = Ric_i=Ric.$$  
Let $h$ be a smooth function on $\mathbb{R}^+$ satisfying $h=1$ on $[0, 1]$, $h\geq 0$, $h$ has compact support on $[0, 1.5)$.  Let $\chi = h(\sqrt{|z_1|^2+|z_2|^2+\cdot\cdot\cdot+|z_N|^2})$. 
Now $$\int_{L_i} Ric\chi = \int_{L_i} \frac{1}{m}dd^c (-\log |s_i|^2)\chi = \int_{L_i\cap (B(p_i, \frac{2}{c})\backslash B(p_i, 0.5c))} \frac{1}{m}(-\log |s_i|^2)dd^c \chi.$$ Recall that $L_i$ has uniform area bound with respect to $\omega_{FS}$. As $\chi$ is supported on $B_{\mathbb{C}^N}(0, 1.5)$, $$|dd^c\chi|_{\omega_{FS}}<C.$$ Also $\log |s_i|^2$ converges uniformly to $0$ on $B(p_i, \frac{2}{c})\backslash B(p_i, 0.5c)$. Putting these together, we find \begin{equation}\label{ricintegralvanish}\lim\limits_{i\to\infty}\int_{L_i} Ric\chi = 0.\end{equation}

On the other hand, \begin{equation}\label{integralpositive}\int_{L_i} Ric\chi\geq \int_{L\cap B(q, 1)} Ric>\epsilon_0>0\end{equation} for some fixed $\epsilon_0$, since $Ric$ is positive at the tangent of $L$ at $q$. Then (\ref{integralpositive}) contradicts (\ref{ricintegralvanish}).
The uniqueness of tangent cone follows from \cite{[DS]}. This completes the proof of Theorem \ref{ricflatmain1} when $M$ satisfies condition $B$.

\medskip

If $M$ satisfies condition $A$, the argument is pretty much the same. 
Let us point out the minor differences. Note for the Ricci flat tangent cone, the cross section is smooth with $Ric = 2n-2$. Then by Bonnet-Meyers theorem, the fundamental group is finite. Hence the parallel pluri-canonical section exists automatically. Let $r_i$ be a sequence such that $r_i\to\infty$. Set $(M_i, p_i) = (M, p, \frac{r}{r_i})$. By passing to a subsequence, we may assume $M_i\to V$. By \cite{[L2]}, $B(p_i, 1)$ is $1$-convex and $F_i=(f_{i1}, .., f_{iN})$ defines a holomorphic map from $B(p_i, 1)\to \mathbb{C}^N$. $F_i$ is an isomorphism except finitely many compact subvarieties (let us call it $E$). Moreover, $f_{ij} = 0$ at $p_i$, $f_{ij}\to f_j$ on $V$ locally uniformly. $(f_1, ..., f_N)$ is a holomorphic embedding from $B_V(o, 1)\to \mathbb{C}^N$. 
As $\omega$ need not be $dd^c$-exact, one should replace (\ref{equationforui}) by a psh function $u_i$ on $B(p_i, 10)$ such that on $B(p_i, 10)\backslash B(p_i, \delta_0)$, $$dd^c u_i \geq 0.5\omega_i.$$ Also on $B(p_i, 10)$, $$|u_i - d^2(p_i, \cdot)|<\Psi(\frac{1}{i}), |u_i|+|\nabla u_i|\leq C.$$ These can be ensured by ~\cite[Proposition $2.5$]{[L2]}. Note that (\ref{ricequality}) holds on $B(p_i, 2)\backslash E$.
However, (\ref{ddcsame}) still holds. The reason is that $\frac{s'_i}{s_i}$ is holomorphic away from $E$, after contraction of $E$, $\frac{s'_i}{s}$ can be extended as a holomorphic function which is nowhere vanishing (use $n\geq 2$).
Correspondingly, (\ref{limitsame}) should be replaced by
\begin{equation}
\begin{aligned}\int_{B(p'_i, 2)}(dd^c\log |s'_i|^{-\frac{2}{m}})^n &= \int_{B(p, 2r'_i)}(dd^c\log |s'_i|^{-\frac{2}{m}})^n \\&= \int_{B(p, 2r_i)}(dd^c\log |s_i|^{-\frac{2}{m}})^n + \int_{B(p, 2r'_i)\backslash B(p, 2r_i)}Ric^n \\&= \int_{B(p_i, 2)}(dd^c\log |s_i|^{-\frac{2}{m}})^n + \int_{B(p, 2r'_i)\backslash B(p, 2r_i)}Ric^n.  \end{aligned} 
\end{equation}
Also, under condition $A$, Lemma \ref{highpowerbd} can be proved similarly.
For reader's convenience, let us state it as 

\begin{lemma}\label{highpowerbd1}
Let $M^n$ satisfy condition $A$. Then there exists $C_1>0$ such that for all $1\leq k\leq n$, for all $r>0$, 
$$r^{2k-2n}\int_{B(p, r)}Ric^k\wedge \omega^{n-k}<C_1.$$ Furthermore, if we consider a tangent cone $(V, o)$ and a sequence of rescaled manifolds $(M_i, p_i) = (M, p, \frac{r}{r_i})\to (V, o)$ in the pointed Gromov-Hausdorff sense, then for $k<n$, $$\lim\limits_{i\to\infty}r_i^{2k-2n}\int_{B(p, r_i)}Ric^k\wedge \omega^{n-k}=\int_{B_V(o, 1)}Ric_V^k\wedge \omega_V^{n-k},$$ where $Ric_V$ is understood in the current sense.

\end{lemma}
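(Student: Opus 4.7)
The plan is to run the contradiction argument of Lemma \ref{highpowerbd} with two adaptations that accommodate the weaker potential available under condition $A$. Assume for contradiction that there is a sequence $r_i\to\infty$ with $r_i^{2k-2n}\int_{B(p,r_i)}Ric^k\wedge\omega^{n-k}\to\infty$, and let $(M_i,p_i)=(M,p,r/r_i)\to(V,o)$ in the pointed Gromov-Hausdorff sense.

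First, in place of the exact potential from \cite{[DS]} used in Lemma \ref{highpowerbd}, I invoke ~\cite[Proposition $2.5$]{[L2]} to produce a psh function $u_i$ on $B(p_i,10)$ with $|u_i-d^2(p_i,\cdot)|<\Psi(1/i)$, $|u_i|+|\nabla u_i|\leq C$, and $dd^cu_i\geq 0.5\,\omega_i$ on $B(p_i,10)\setminus B(p_i,\delta_0)$. I take the cutoff $\chi(u_i)$ supported in an annulus of the form $B(p_i,2)\setminus B(p_i,1)$, so that $dd^c\chi(u_i)$ lives in the region where this lower bound holds. Second, under condition $A$, ~\cite{[L2]} provides a holomorphic embedding of $B_V(o,1)$ into $\mathbb{C}^N$ (outside a finite union of compact subvarieties), hence there are holomorphic $f_1,\ldots,f_n$ on $V$ with $df_1\wedge\cdots\wedge df_n\not\equiv 0$. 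I lift these to holomorphic functions $f_{ij}$ on $B(p_i,10)$ by H\"ormander $L^2$ estimates with weight $u_i$, and set $s_i=df_{i1}\wedge\cdots\wedge df_{in}$. Poincar\'e-Lelong then yields
$$2\pi\int\chi(u_i)\,Ric_i^k\wedge\omega_i^{n-k}\leq\int\log|s_i|^2\,dd^c\chi(u_i)\wedge Ric_i^{k-1}\wedge\omega_i^{n-k}.$$
Condition $A$ gives $Ric_i\leq C\omega_i$ on the support of $\chi'$ and $|dd^c\chi(u_i)|_{\omega_i}\leq C$, reducing matters to bounding $\int_{B(p_i,2)\setminus B(p_i,1)}|\log|s_i|^2|\,\omega_i^n$.

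This last bound proceeds exactly as in Lemma \ref{highpowerbd}: since $|s_i|^2$ is uniformly bounded above I may assume $\log|s_i|^2\leq 0$, and for each $q\in\partial B_V(o,1)$ I apply the local parametrization theorem of ~\cite{[De2]} to project $B(q_i,\delta)$ properly onto a ball in $\mathbb{C}^n$. Pushing forward $\log|s_i|^2$ and $u_i$ yields a nonpositive psh function $h_i$ and a bounded psh function $v_i$ on a fixed Euclidean ball, and ~\cite[Proposition $3.11$]{[De2]} then controls $\int h_i(dd^cv_i)^n$ uniformly. A finite covering of the annulus delivers the contradiction.

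The main technical obstacle is verifying that every estimate tolerates the inequality $dd^cu_i\geq 0.5\omega_i$ in place of the equality available under condition $B$: only lower bounds on $dd^cu_i$ enter the push-forward of $(dd^cv_i)^n$ as a positive current, and only upper bounds on $|\nabla u_i|$ are needed to control $|dd^c\chi(u_i)|$, both of which are built into ~\cite[Proposition $2.5$]{[L2]}. For the convergence statement when $k<n$, I split the integral over $B(p_i,1)$ into a ball of radius $\epsilon$ around $p_i$ (contributing $O(\epsilon^{2n-2k})$ by the uniform bound just proved) and its complement. On the complement, Anderson's $C^{1,\alpha}$ regularity (applicable under condition $A$) gives $C^{1,\alpha}$ convergence of local K\"ahler potentials on compact subsets of $V_{reg}$, so Chern-Levine-Nirenberg produces the desired convergence; the contribution from a small tubular neighborhood of the singular set (of real codimension $\geq 4$) is negligible because $Ric_i\leq C\omega_i$ there.
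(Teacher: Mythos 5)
Your proposal is correct and follows essentially the same route as the paper, which proves this lemma precisely by repeating the argument of Lemma \ref{highpowerbd} with the two substitutions you identify: the psh potential from ~\cite[Proposition $2.5$]{[L2]} satisfying $dd^cu_i\geq 0.5\,\omega_i$ on the annulus in place of the exact potential, and holomorphic functions coming from the embedding of \cite{[L2]}. The one imprecision is your claim that controlling $dd^c\chi(u_i)$ needs only a gradient bound on $u_i$: the term $\chi'(u_i)\,dd^cu_i$ must also be absorbed, e.g.\ by noting that $dd^cu_i\wedge Ric_i^{k-1}\wedge\omega_i^{n-k}\leq C\bigl(\omega_i^n+(dd^cu_i)^n\bigr)$ on the annulus where $Ric_i\leq C\omega_i$ and $dd^cu_i\geq 0.5\,\omega_i$, so that the projection bound on $\int(-\log|s_i|^2)(dd^cv_i)^n$ still suffices.
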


Then we can show that $$\int_{V}\chi (dd^c\log |s_V|^{-\frac{2}{m}})^n = 0.$$ Then the argument proceeds as before. We can show that all tangent cones are Ricci flat.  Let $X$ be the blow down contraction of exceptional locus for $M$.
Then one can apply \cite{[L2]} to prove that $X$ is isomorphic to an affine algebraic variety in $\mathbb{C}^N$. The rest of the argument is the same as before. 
Uniqueness of tangent cones follows from \cite{[CM]}\cite{[DS]}\cite{[L2]}.
The proof of Theorem \ref{ricflatmain1} is complete. As a side remark, Lemma \ref{highpowerbd1} implies that the condition $\int_M Ric^n<+\infty$ in \cite{[Mok1]} holds automatically (take $k=n$).

\medskip
\medskip

Proof of Corollary \ref{ricflatlocal1}:

We claim that the limit space $(X, o)$ is homeomorphic to a normal complex space. Note $(X, o)$ is complex analytically smooth away from $o$, as $|Rm|\leq \frac{C}{r^2}$. Let $r_0$ be small so that for all $r<r_0$, $B(o, r)$ is Gromov-Hausdorff close to a metric cone, after rescaling the metric so that the radius is $1$. Then we can show $1$-convexity as before. Then, for some $c>0$, we can find a holomorphic embedding from $B(o, cr_0)\to \mathbb{C}^N$,  up to contraction of finitely many compact subvarities to points. Note inside $B(o, cr_0)$, there is no compact variety of positive dimension. Otherwise, one can rescale the metric to the scale of the compact variety, then we find a contradiction by $1$-convexity. This shows $(X, o)$ is homeomorphic to a normal complex space. The rest of the argument is similar to the noncompact case. 
The uniqueness of tangent cone follows from the argument in \cite{[DS]}.

\medskip
\medskip

Proof of Corollary \ref{ricflatlocal2}:
The argument is almost identical to noncompact case. One difference is this: when solving $\overline\partial$-equation, we have to multiply the metric on line bundle by $e^{-Cu}$, where $dd^c u = \omega$. This kills the negative parts of Ric.

\medskip
\medskip

Proof of Corollary \ref{ricflatcorintegral}:

Assume $r_i\to\infty$ such that $\lim\limits_{i\to\infty}r_i^2\dashint_{B(p, r_i)}S = 0$. Define $(M_i, p_i) = (M, p, \frac{r}{r_i})$. We assume $(M_i, p_i)$ converges in the pointed Gromov-Hausdorff sense to $(V, o)$.
Then by applying the convergence result in Lemma \ref{highpowerbd1}, we find $V$ is Ricci flat on the regular part in the current sense. By the regularity theory of complex Monge-Amp\`ere equations, we see that $V$ is Ricci flat in the differential geometric sense. Then apply Theorem \ref{ricflatmain1}.

\medskip
\medskip

Proof of Corollary \ref{ricflatcorpointwise}

This is straightforward, since the tangent cone is Ricci flat.

\medskip
\medskip

Proof of Corollary \ref{ricflatcorintegralsn}

There exists a sequence $r_i\to\infty$, such that $\int_{B(p, 2r_i)\backslash B(p, r_i)} S^n \to 0$. Define $(M_i, p_i) = (M, p, \frac{r}{r_i})$. We assume $(M_i, p_i)$ converges in the pointed Gromov-Hausdorff sense to $(V, o)$.
Then H\"older inequality implies $$\dashint_{B(p_i, 2)\backslash B(p_i, 1)} S_i \leq (\dashint_{B(p_i, 2)\backslash B(p_i, 1)} S^n_i)^{\frac{1}{n}}\to 0.$$ This means $Ric_i$ converges to zero in the current sense on $V$. Then apply Theorem \ref{ricflatmain1}.

\medskip
\medskip

Proof of Corollary \ref{ricflatcorrmn}

This follows from Corollary \ref{ricflatcorintegralsn}.

\end{document}